\documentclass[a4paper, 12pt]{article}
\usepackage[margin=1in]{geometry} 
\usepackage{amsmath,amssymb,amsthm}
\usepackage{amsmath}
\usepackage{amsfonts}
\usepackage{hyperref}
\usepackage{mathrsfs}
\usepackage{mathtools}
\usepackage{graphicx}
\usepackage{tikz-cd}
\usepackage{parskip}
\usepackage{xcolor}
\usepackage[capitalize]{cleveref}
\usepackage[overload]{empheq}

% Page stuff
% \renewcommand{\headrulewidth}{0pt}
% Keeping the title page clean
% \fancypagestyle{titlepage}{
%     \lhead{}\chead{}\rhead{}
%     \lfoot{}\cfoot{}\rfoot{}
% }
% Some stackoverflow magic
% \renewcommand{\sectionmark}[1]{\markright{#1}} 
% I like knowing where I am
% \fancypagestyle{contentpage}{
%     \lhead{}\chead{}\rhead{\textit{\rightmark}}
%     \lfoot{}\cfoot{\thepage}\rfoot{}
% }

% Section magic
\setcounter{section}{0}
\iffalse
    \usepackage{titlesec}
    \titleformat{\section}[block]{\Large\bfseries\filcenter}{}{1em}{}
    \usepackage{hyperref}
    \hypersetup{
        colorlinks,
        citecolor=black,
        filecolor=black,
        linkcolor=black,
        urlcolor=black
    }
    \setcounter{secnumdepth}{0}
\fi

% Header

% Convenience commands.
\newcommand{\RR}{\mathbb{R}}
\newcommand{\ZZ}{\mathbb{Z}}
\newcommand{\NN}{\mathbb{N}}
\newcommand{\QQ}{\mathbb{Q}}

\newcommand{\CC}{\mathbb{C}}
\newcommand{\DD}{\mathbb{D}}

\newcommand{\condRR}{\underline{\RR}}
\newcommand{\condZZ}{\underline{\ZZ}}

\newcommand{\QQdisc}{\QQ^\text{disc}}
\newcommand{\condQQdisc}{\underline{\QQdisc}}
\newcommand{\condRRdisc}{\underline{\RR^\text{disc}}}
\newcommand{\condext}{\underline{\ext}}
\newcommand{\RRdisc}{\RR^\text{disc}}
\newcommand{\condRRZZ}{\underline{\RR/\ZZ}}
\newcommand{\condsethom}{\mathscr{H}\kern -.5pt om}
\newcommand{\homcondset}{\Hom_{\operatorname{Cond(Set)}}}
\newcommand{\homcondab}{\Hom_{\condab}}
\newcommand{\condset}{\operatorname{Cond(Set)}}
\newcommand{\homab}{\Hom_{\ab}}
\newcommand{\ZZTr}{\ZZ((T))_r}
\newcommand{\condZZTr}{\underline{\ZZTr}}
\newcommand{\ZZTrc}{\ZZ ((T))_{r, \leq c}}
\newcommand{\condZZTrc}{\underline{\ZZTrc}}

\newcommand{\Hicech}{H^i_{\text{\v{C}ech}}}

\newcommand{\mc}[1]{\mathcal{#1}}

\DeclareMathOperator{\Hom}{Hom}
\DeclareMathOperator{\ab}{Ab}

\DeclareMathOperator{\condab}{Cond(Ab)}
\newcommand{\rhom}{R\Hom}
\newcommand{\condrhom}{R\underline{\Hom}}
\newcommand{\condhom}{\underline{\Hom}}

\DeclareMathOperator{\tot}{Tot}
\DeclareMathOperator{\ext}{Ext}
\DeclareMathOperator{\im}{Im}

\DeclareMathOperator{\colim}{colim}

\DeclareMathOperator{\holim}{holim}

\renewcommand{\epsilon}{\varepsilon}

\DeclarePairedDelimiter{\norm}{\lVert}{\rVert}

\topmargin -.5in
\textheight 9in
\oddsidemargin -.25in
\evensidemargin -.25in
\textwidth 7in

\setlength{\parindent}{1em}
\setlength{\parskip}{1em}

\usepackage{blindtext}

% Some theorem styles
% See http://texdoc.net/texmf-dist/doc/latex/thmtools/thmtools.pdf
% \renewcommand{\qedsymbol}{$\blacksquare$}
\usepackage{thmtools,thm-restate}

\newtheorem{theorem}{Theorem}
\newtheorem{lemma}[theorem]{Lemma}
\newtheorem{prop}[theorem]{Proposition}
\newtheorem{cor}[theorem]{Corollary}

\theoremstyle{definition}
\newtheorem{defi}[theorem]{Definition}
\newtheorem{rem}[theorem]{Remark}
\newtheorem{example}[theorem]{Example}

\numberwithin{equation}{section}
\numberwithin{lemma}{section}
\numberwithin{theorem}{section}
\numberwithin{cor}{section}
\numberwithin{defi}{section}
\numberwithin{rem}{section}
\numberwithin{example}{section}
\numberwithin{prop}{section}

\begin{document}

\begin{titlepage}
   \begin{center}
       \vspace*{1cm}
       
       \huge
       \textbf{Condensed Mathematics} \\
		 \LARGE
		 \vspace{0.7cm}
		 The internal Hom of condensed sets and condensed abelian groups and a prismatic construction of the real numbers
		 \huge
            
	  	 \vspace{4cm}
       Rodrigo Marlasca Aparicio
       \vspace{2cm}
            
       Part B Extended Essay \\
		 \vspace{1cm}
		 Merton College, University of Oxford \\
		 \vspace{1cm}
		 MMath Mathematics \\
       \vspace{1cm}
       Trinity Term 2021
            
       \vspace{0.8cm}

   \end{center}
\end{titlepage}
	
{
  \hypersetup{linkcolor=black} 

  \renewcommand{\baselinestretch}{1.75}\normalsize %spacing in table of contents
  \tableofcontents
}

\section{Introduction}

Condensed mathematics are a very recent area of study, first proposed by Dustin Clausen and Peter Scholze. They provide a generalization of topological spaces, groups, rings and modules that is arguably better behaved than the usual categories.

One of the most common motivating examples is the map $\RRdisc \to \RR$ in the category of topological abelian groups. This is clearly an epimorphism and a monomorphism, so if we had an abelian category we would expect an isomorphism of topological groups. However, this is not the case due to the different topologies, making many constructions difficult in this category. If we consider this as a map of condensed abelian groups, it can be shown that it is not an epimorphism, so the cokernel is not trivial.

Having an abelian category of topological groups has many advantages. Mainly, one can start using many homological methods like extension and torsion groups. For example, while continuous group cohomology does not always give long exact sequences, we can substitute these by extension groups in the abelian category of condensed $\ZZ[G]$-modules, fixing this problem.

This new theory has yielded some surprising results and not only with topological groups. An example is the Whitehead problem: is every abelian group with $\ext^1 (A, \ZZ) = 0$ a free abelian group? This question turns out to be independent of ZFC \cite{whitehead}, but it has been proved that if $\condext^1 (\underline{A}, \condZZ) = 0$ then $A$ is free abelian, where the extension group is enriched over condensed abelian groups \cite{masterclass}.

Very recently, condensed mathematics have also been used to provide a more categorical and abstract approach to functional analysis that has been coined analytic geometry \cite{analytic}.

Other theories have been developed to solve the aforementioned issues. For example, pyknotic objects \cite{pyknotic} use the same core ideas but set theoretic difficulties are solved differently. Finally, a different approach is given by bornological modules \cite{bornology}, which form a quasi-abelian category but avoid some unintuitive problems of condensed sets like non-trivial spaces with the empty set as its underlying set. However, this essay will solely focus on condensed objects and some of their applications.

\begin{itemize}
	\item \cref{section-condobjects} provides an introduction to the study of condensed objects, paying particular attention to condensed abelian groups and their resolutions and cohomology. 
	\item In \cref{section-enrichedhom} we talk about closed symmetric monoidal structures in various condensed categories. We define the internal Hom for condensed sets and provide an alternative proof of \cite{condensed}, Proposition 4.2.
	\item \cref{section-derivedmapping} shows a few important examples of the enriched Hom in the derived category. 
	\item \cref{section-prismatic} studies an intermediate step of a theorem of Clausen and Scholze, constructing the real numbers from discrete spaces.
\end{itemize}

\section{Condensed objects}
\label{section-condobjects}

\subsection{Preliminaries}

Sites give a more generalised version of a topological space. Instead of open sets, we will have sieves.

\begin{defi}
	Let $\mc{C}$ be a category and let $c$ be an object of $\mc{C}$. A \emph{sieve} $S: \mc{C}^{op} \to \text{Set}$ is a subfunctor of $\text{Hom} (-, c)$.	

	Given a morphism $f: c' \to c$ we can define the \emph{pullback} of $S$ by $f$ as
	\begin{align*}
		f^*S (c'') = \{ g: c'' \to c' | fg \in S(c'')\}
	\end{align*}
	That is, arrows that composed with $f$ give a map in the sieve.
\end{defi}

\begin{defi}
	A \emph{Grothendieck topology} on a category $\mc{C}$ is a collection, for each object $c$ of $\mc{C}$, of sieves called the \emph{covering sieves} of $c$. They must satisfy the following axioms.
	\begin{enumerate}
		\item	If $S$ is a covering sieve on $X$ and $f: Y \to X$ is a morphism, then the pullback $f^* S$ is a convering sieve on $Y$.
		\item	If $S$ is a covering sieve on $X$, $T$ is a sieve on $X$ and for any $f \in S$ the sieve $f^*T$ is a covering sieve, then $T$ is a covering sieve.
		\item $\Hom(-, X)$ is a covering sieve for any $X$ in $\mc{C}$.
	\end{enumerate}
	A \emph{site} is a category $\mc{C}$ together with a Grothendieck topology.
\end{defi}

\begin{rem}
	We make the distinction between a topology and a pretopology. A \emph{covering family} is a collection of maps with a common codomain.

	A \emph{Grothendieck pretopology} is a collection of covering families such that
	\begin{enumerate}
		\item For all objects $X$ of $\mc{C}$, all maps $Y \to X$ and all covering families $\{X_i \to X\}$, all the relevant fiber products exist and the collecion $\{X_i \times _X Y \to Y\}$ is a covering family.
		\item If $\{X_i \to X\}$ is a covering family and for every $i$ the collection $\{X_{ij} \to X_i\}$ is also a covering family, the composition $\{X_{ij} \to X_i \to X\}$ is a covering family.
		\item If $f$ is an isomorphism, $\{f\}$ is a covering family.
	\end{enumerate}
	Every pretopology can be extended to a topology by considering all sieves containing a covering family \cite[Remark 1.3.1]{topos}.
\end{rem}

We may also define a sheaf on a site. Remember that if we consider the category of posets of a topological space $X$, if $U, V \subset X$ are open then $U \cap V = U \times _X V$, which gives rise to the following natural definition.

\begin{defi}
	An $X$-valued \emph{sheaf} on a site $\mc{C}$ is a functor $F: \mc{C}^{op} \to X$ such that for any $X \in \mc{C}$ and any cover $\{X_i \to X\}$ the following diagram is an equaliser
	\begin{align*}
		F(X) \xhookrightarrow{} \prod_i F(X_i) \rightrightarrows \prod_{i,j} F(X_i \times _X X_j) 
	\end{align*}
	where the stacked arrows correspond to the induced maps from the fiber product to $X_i$ and $X_j$.
\end{defi}

\begin{rem}
	The inclusion $F(X) \xhookrightarrow{} \prod_i F(X_i)$ is the local character of the sheaf, since the value of $F(X)$ is determined by the value at a cover. On the other hand, the equaliser corresponds to the gluing axiom, because sections coinciding at the fiber product assemble together to form the section at $X$.
\end{rem}

We now define a particularly important site for the study of condensed objects.

\begin{defi}
	A \emph{profinite set} is a Hausdorff, compact and totally disconnected space. 

	The \emph{pro-étale site} $*_{\text{proét}}$ of a point is the category of profinite sets $S$ whose covering families are finite sets of jointly surjective maps.
\end{defi}

\begin{rem}
	The above terminology comes from the pro-étale site of a scheme $X$, since the site of profinite sets corresponds to the pro-étale site of $X$ when it is just a single point \cite{etale}.

	To check this is a pretopology, observe that a limit of profinite spaces is profinite \cite[\href{https://stacks.math.columbia.edu/tag/0ET8}{Tag 0ET8}]{stacks-project}, so all relevant fiber products exist.

	For maps of topological spaces
	\[
		\begin{tikzcd}
			A \arrow{r}{f} & C & \arrow{l}[swap]{g} B
		\end{tikzcd}
	\]
	the fiber product $A \times _C B$ is given by $\{(a,b) | f(a) = g(b)\} \subseteq A \times B$, and we have obvious projection maps to $A$ and $B$ \cite{calcut2012topological}.

	Then, for a covering family $\{X_i \to X\}$ (so they are finite and jointly surjective) and a map $f: Y \to X$, for every $y \in Y$, there is some $x$ in some $X_i$ such that their images onto $X$ coincide. The fiber products then surject onto $Y$ and pullback gives a new covering family.

	The composition of two covering families is clearly surjective and any isomorphism is surjective, so it forms a covering family. This shows all pretopology axioms are satisfied.
\end{rem}

One would want to naively define condensed sets as a sheaf on $*_{\text{proét}}$. Evaluation at a profinite set can be thought of as the maps from this profinite set into our condensed set. In this way, condensed sets capture not only the topology of the space but also how it interacts with other spaces.

However, the category of profinite sets is large and this would present many set theoretic issues, like the category not being locally small. We use extremally disconnected sets to find an equivalent definition.

\begin{defi}
	An \emph{extremally disconnected set} is a projective object in the category of compact Hausdorff spaces.
\end{defi}

\begin{rem}
	\cite{Gleason} shows that the category of compact Hausdorff spaces has enough projectives, so any compact Hausdorff $X$ admits a surjection $S \twoheadrightarrow{} X$ where $S$ is extremally disconnected.

	This implies that a sheaf on profinite sets is determined by the values at extremally disconnected sets, and it can be shown that the category of sheaves on profinite sets is equivalent to the category of functors on extremally disconnected sets taking finite disjoint unions to products \cite[Proposition 2.7]{condensed}.

	Extremally disconnected sets are difficult to come by (the most common ones are the Stone-\v{C}ech compactification of a discrete space) but their characterisation as projectives will be enough for this essay.
\end{rem}

\begin{defi}
	A condensed set/group/ring is a functor
	\begin{align*}
		T: \{\text{extremally disconnected sets}\}^{\text{op}} \to \{\text{sets/groups/rings}\}
	\end{align*}
	such that $T(\emptyset) = *$ and for any extremally disconnected $S_1, S_2$,
	\begin{align*}
		T(S_1 \sqcup S_2) = T(S_1) \times T(S_2)
	\end{align*}
	and is the left Kan extension of its restriction to $\kappa$-small extremally disconnected sets, for some uncountable strong limit cardinal $\kappa$.
	\[
		\begin{tikzcd}
			\text{ED}^{\text{op}} \arrow[rd, dashed, "T"] & \\
			\text{ED}_{\kappa}^{\text{op}} \arrow{u} \arrow{r}{T_{\kappa}} & \{\text{sets/groups/rings}\}
		\end{tikzcd}
	\]
\end{defi}

\begin{rem}
	Equivalently, they are also sheaves on profinite sets satisfying a similar Kan extension condition. It will then make sense to talk about sheafification.

	Informally, one can think of the left Kan condition as the sheaf being determined by its values on $\kappa$-small extremally disconnected sets for some $\kappa$. This is not exactly a category of sheaves on a site, but it shares many of its features.

	\cite[Remark 2.13]{condensed} shows that this category is stable under the section-wise formation of limits and filtered colimits, so these will be easily computed.
\end{rem}

\begin{prop}[\cite{condensed}, Proposition 2.15]
	There is a functor from T1 topological spaces/groups/rings to condensed sets/groups/rings sending $T \to \underline{T}$, where $T(S) = C(S,T)$ for extremally disconnected $S$.

	If $A$ is compactly generated as a topological space and $A,B$ are T1 topological spaces/groups/rings then
	\begin{align*}
		\Hom(\underline{A}, \underline{B}) = \Hom(A,B)
	\end{align*}
	where the right hom-set is in the category of topological spaces/groups/rings.
\end{prop}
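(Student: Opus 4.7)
The plan is to establish two things: that $T \mapsto \underline{T}$ genuinely defines a condensed set functorially in $T$, and that the induced map on Hom-sets is a bijection when the source is compactly generated.

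For the functoriality part, I would verify the three conditions of a condensed set for $\underline{T}(S) = C(S,T)$. We have $C(\emptyset, T) = *$ by the unique (vacuously continuous) function out of $\emptyset$, and $C(S_1 \sqcup S_2, T) = C(S_1, T) \times C(S_2, T)$ by the universal property of disjoint union in $\text{Top}$. Contravariant functoriality in $S$ is precomposition, while functoriality in $T$ (a continuous $f: T_1 \to T_2$ yielding $\underline{f}: \underline{T_1} \to \underline{T_2}$) is postcomposition. The Kan extension condition for a suitable uncountable strong limit cardinal $\kappa$ is technical and I would quote it from the preceding setup.

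For the Hom identification, let $\Phi: \Hom_{\text{Top}}(A,B) \to \Hom(\underline{A}, \underline{B})$ send $f$ to $\underline{f}$. Injectivity is immediate: the singleton $*$ is extremally disconnected, and evaluating any natural transformation at the point-inclusions $\iota_a: * \to A$ recovers the underlying function $A \to B$. For surjectivity, given a natural transformation $\eta$, I would define $f(a) := \eta_{*}(\iota_a) \in \underline{B}(*) = B$. To show $f$ is continuous, I would use that $A$ is compactly generated: it suffices to verify that $f \circ g: K \to B$ is continuous for every compact Hausdorff $K$ and every continuous $g: K \to A$. Invoking that condensed sets extend to all compact Hausdorff spaces with $\underline{A}(K) = C(K,A)$ and $\underline{B}(K) = C(K,B)$, the transformation $\eta$ provides a continuous map $\eta_K(g) \in C(K, B)$. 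Naturality at each point-inclusion $\iota_k: * \to K$ then identifies $f(g(k))$ with $\eta_K(g)(k)$, so $f \circ g = \eta_K(g)$ is continuous. The same naturality argument applied pointwise on any extremally disconnected $S$ shows $\underline{f} = \eta$, closing the bijection.

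The main obstacle will be the passage from $\eta$, originally defined on extremally disconnected sets, to compact Hausdorff probes of $A$. I would either invoke that a functor on $\text{ED}$ sending finite disjoint unions to products extends uniquely to a sheaf on profinite, and then compact Hausdorff, spaces, or else run the argument directly using surjections from extremally disconnected covers of compact Hausdorff probes and checking that values descend along them. The compactly generated hypothesis on $A$ is the crucial input bridging continuity on these probes back to continuity of $f$ on $A$ itself; without it one would only recover a function whose restrictions to compact Hausdorff subspaces are continuous, which need not be enough.
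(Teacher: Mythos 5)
The paper quotes this proposition directly from Clausen and Scholze without reproducing its proof, so there is no in-paper argument to compare against; I am evaluating your proposal on its own terms. Your structure is the right one and is essentially the argument in the cited source: check that $C(-,T)$ sends $\varnothing$ to a point and disjoint unions to products, observe that functoriality in $S$ and in $T$ is pre- and post-composition, test injectivity at the one-point probe $*$, and for surjectivity extract the underlying function $f(a) := \eta_*(\iota_a)$ and deduce continuity from the compactly generated hypothesis. The step you correctly flag as the main obstacle, namely extending $\underline{A}$ to compact Hausdorff probes with $\underline{A}(K) = C(K,A)$, is the one that deserves to be spelled out rather than merely invoked: it holds because $C(-,A)$ is a sheaf on compact Hausdorff spaces for the finite-jointly-surjective topology (surjections of compact Hausdorff spaces are closed quotient maps, so continuous maps glue along them), and this sheaf restricts to $\underline{A}$ on extremally disconnected sets, hence coincides with the unique sheaf extension guaranteed by the equivalence between sheaves on profinite sets and product-preserving presheaves on extremally disconnected sets. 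With that filled in, evaluating $\eta$ sectionwise at point inclusions gives $\eta_S(h)(s) = f(h(s))$ for $h \in C(S,A)$ and $s \in S$, so $\underline{f}=\eta$ as you claim. The algebraic variants require no new ideas, since all maps in sight respect the group or ring structure sectionwise. Your proposal is correct.
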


\begin{rem}
	The T1 condition on the topology is to ensure that the defined sheaf is the left Kan extension of the restriction to $\kappa$-small extremally disconnected sets for some $\kappa$ (\cite{condensed} Proposition 2.15).

	This functor allows us to think about condensed sets in a more natural way. Let $T$ be a condensed set and let $S$ be extremally disconnected. Then,
	\begin{align*}
		\Hom (\underline{S}, T) = T(S)
	\end{align*}
	by the Yoneda lemma. Hence, evaluation at $S$ can be thought of as the set of continuous maps from $S$ to $T$.
\end{rem}

\subsection{Condensed abelian groups}

Condensed abelian groups turn out to be as well-behaved as one could reasonably expect them to be.

\begin{prop}[\cite{condensed}, Theorem 2.2]
	The category of condensed abelian groups is abelian, satisfying the same Grothendieck axioms as the category of groups. That is,
	\begin{enumerate}
		\item (AB3) All colimits exist.
		\item (AB3*) All limits exist.
		\item (AB4) Arbitrary direct sums are exact.
		\item (AB4*) Arbitrary products are exact.
		\item (AB5) Arbitrary filtered colimits are exact.
		\item (AB6) For any index set $J$ and filtered categories $I_j, j \in J$ with functors $i \to M_i$ from $I_j$ to condensed abelian groups,
			\begin{align*}
				\lim_{i_j, \forall j \in J} \prod_{j \in J} M_{ij} \cong \prod_{j \in J} \lim_{i_j \in I_j} M_{ij}
			\end{align*}
	\end{enumerate}
\end{prop}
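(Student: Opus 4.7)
The plan is to reduce each of the listed properties to the corresponding statement in the category of abelian groups, exploiting two facts: first, condensed abelian groups are, by definition, product-preserving functors on extremally disconnected sets satisfying a Kan extension condition; second, every extremally disconnected set is projective in compact Hausdorff spaces, so no non-trivial sheafification is needed on this site.

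First I would establish that all limits and all filtered colimits in $\condab$ are computed sectionwise, which is essentially Remark 2.13 cited above. Limits commute with products, so a sectionwise limit of product-preserving functors is again product-preserving; they also commute with filtered colimits, so the Kan extension condition is inherited. The analogous verification for filtered colimits uses that filtered colimits of sets commute with finite products and with themselves. This immediately yields AB3* (all limits exist) and, since filtered colimits are exact in $\ab$, AB5. In particular, kernels can be formed sectionwise.

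Next I would construct cokernels. For $f : M \to N$, the sectionwise formula $S \mapsto N(S)/f(S)(M(S))$ already preserves finite products because quotients distribute over finite products in $\ab$; this is precisely where working on extremally disconnected sets rather than on all profinite sets pays off, as no sheafification is required. Checking the universal property and the Kan extension condition is routine, possibly after enlarging $\kappa$. With kernels and cokernels in place, the isomorphism between coimage and image is inherited from its sectionwise counterpart in $\ab$, so $\condab$ is abelian. General colimits (AB3) then follow: arbitrary coproducts can be presented as filtered colimits of finite coproducts, which being finite products are sectionwise, and coequalisers are built from cokernels.

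Finally, the remaining exactness axioms transfer almost mechanically. AB4* holds because products are sectionwise and exact in $\ab$. AB4 follows from AB5 since any direct sum is a filtered colimit of finite direct sums, which agree with finite products and are therefore exact. AB6 concerns the distributivity of filtered colimits over arbitrary products; since both operations are computed sectionwise in $\condab$ and AB6 is known to hold in $\ab$, the isomorphism is inherited pointwise. The main obstacle I expect is bookkeeping with the Kan extension condition in the cokernel construction and in the formation of general colimits, since one may need to pass to a larger strong limit cardinal $\kappa$ each time; once this is handled uniformly, everything else is a clean transfer of properties from $\ab$.
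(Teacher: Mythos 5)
Your proposal is correct and takes essentially the same route the paper sketches in the remark following this proposition: all limits and colimits in $\condab$ can be formed sectionwise on extremally disconnected sets because limits and colimits of abelian groups commute with finite products, with the Kan extension condition preserved by passing to a large enough cardinal (the paper cites this to Remark 2.13 of Scholze and Clausen), and then each Grothendieck axiom is inherited from $\ab$. Your decomposition of general colimits into filtered colimits of finite coproducts together with cokernels is just a more hands-on packaging of the observation the paper states directly, namely that arbitrary colimits in $\ab$ already commute with finite products.
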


\begin{rem}
	Since all limits and colimits of abelian groups commute with finite products (=direct sums) we have that the category of functors on extremally disconnected spaces taking finite disjoint unions to products is stable under section-wise limits and colimits. 

	When passing to condensed sets, note that a left Kan extension can be written as a filtered colimit, so it will commute with any colimits. As for limits, \cite[Remark 2.13]{condensed} shows that they also preseve the left Kan condition by choosing a suitable cardinal $\kappa$.

	Limits and colimits being computed section-wise leads to naturally talking about injections or surjections of condensed abelian groups (instead of monomorphisms and epimorphisms). Unlike the usual category of sheaves, both kernels and cokernels can be computed at every section. A map is a monomorphism (resp. epimorphism) if and only if it is injective (resp. surjective) at every section.

	We highlight the contrast with condensed sets, where one can only take limits and filtered colimits section-wise. The problem with a general colimit is that they don't commute with all products in the category of sets so the sheaf condition might not be preserved.
\end{rem}

There is a condensed analogue to free abelian groups.

\begin{prop}[\cite{condensed}, Theorem 2.2]
	\label{prop-projcondab}
	There is a left adjoint functor to the forgetful functor from condensed abelian groups to condensed sets, $T \mapsto \ZZ[T]$. This is the sheafification of $S \mapsto \ZZ[T(S)]$.

	For extremally disconnected $S$, the free condensed abelian group $\ZZ[\underline{S}]$ is projective.
\end{prop}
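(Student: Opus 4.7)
For the existence of the left adjoint, the plan is to reduce to two well-known adjunctions: free abelian group $\dashv$ forgetful on sets, and sheafification $\dashv$ inclusion of sheaves into presheaves. Given a condensed set $T$, I would form the presheaf of abelian groups $\ZZ^{\text{pre}}[T]\colon S \mapsto \ZZ[T(S)]$ obtained by applying the usual free abelian group functor pointwise. This generally fails the finite-disjoint-union condition, since
\[
\ZZ[T(S_1) \times T(S_2)] \;\neq\; \ZZ[T(S_1)] \oplus \ZZ[T(S_2)],
\]
so I would then sheafify in the condensed sense to obtain $\ZZ[T]$.

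To verify the adjunction I would compose three standard identifications. Pointwise, the free-abelian-group / forgetful adjunction yields a presheaf-level equality
\[
\Hom_{\text{pre-Ab}}(\ZZ^{\text{pre}}[T], M) \;=\; \Hom_{\text{pre-Set}}(T, M).
\]
Since both $T$ and the underlying presheaf of $M$ already satisfy the condensed conditions, presheaf maps between them agree with morphisms in $\condset$. Finally, by the universal property of sheafification, presheaf maps out of $\ZZ^{\text{pre}}[T]$ into $M$ are the same as morphisms in $\condab$ out of $\ZZ[T]$, yielding the desired adjunction.

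For the projectivity of $\ZZ[\underline{S}]$ when $S$ is extremally disconnected, I would combine the adjunction just constructed with the Yoneda-type identity $\Hom_{\condset}(\underline{S}, N) = N(S)$ observed in the preceding remark. This produces a natural isomorphism
\[
\Hom_{\condab}(\ZZ[\underline{S}], M) \;\cong\; M(S),
\]
so the functor $\Hom_{\condab}(\ZZ[\underline{S}], -)$ is nothing other than evaluation at $S$. The earlier remark records that epimorphisms in $\condab$ are precisely the section-wise surjections, so evaluation at $S$ preserves surjectivity, which is exactly the projectivity of $\ZZ[\underline{S}]$.

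The main obstacle I expect lies in the technical handling of sheafification inside the condensed framework: condensed objects carry the additional left Kan extension condition relative to an uncountable strong limit cardinal $\kappa$, so one must ensure that sheafification preserves this condition for a suitable $\kappa$. This is the content of \cite[Remark 2.13]{condensed}; once it is granted, the remainder of the argument is formal manipulation of adjunctions combined with the section-wise description of epimorphisms.
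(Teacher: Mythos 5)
Your proposal is correct, and it is essentially the argument the paper itself implicitly relies on: the paper does not spell out a proof (the proposition is cited to \cite{condensed}, Theorem 2.2), but the two remarks it supplies — that $\Hom_{\condab}(\ZZ[\underline{S}], M) \cong \Hom_{\condset}(\underline{S}, M) \cong M(S)$ by the free--forget adjunction and Yoneda, and that a map of condensed abelian groups is an epimorphism if and only if it is surjective at every section — are exactly the two ingredients you combine for projectivity. Your description of $\ZZ[T]$ as the sheafification of $S \mapsto \ZZ[T(S)]$, together with the three-step composition of adjunctions (pointwise free $\dashv$ forget, full subcategory, sheafification $\dashv$ inclusion), is the standard construction and is sound; one could even sharpen your motivating example by noting that already $\ZZ[T(\emptyset)] = \ZZ \neq 0$, so the pointwise presheaf fails the condensed axioms at the empty set before one even reaches binary coproducts. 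You correctly flag the only genuinely delicate point — that sheafification must be arranged to preserve the $\kappa$-small left Kan extension condition — which is handled by \cite[Remark 2.13]{condensed}.
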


\begin{rem}
	Using this adjunction, we see that for extremally disconnected $S$ and a condensed abelian group $M$,
	\begin{align*}
		\Hom (\ZZ[\underline{S}], M) \cong \Hom (\underline{S}, M) \cong M(S)	
	\end{align*}
	where the last isomorphism is by the Yoneda lemma.
\end{rem}

The functor from T1 topological abelian groups is not exact or even left exact, but we can focus on certain special cases.

\begin{prop}
	\label{prop-condses}
	Suppose  
	\[
		\begin{tikzcd}
			0 \arrow{r} & A \arrow{r}{f} & B \arrow{r}{g} & C \arrow{r} & 0
		\end{tikzcd}
	\]
	is a sequence of Hausdorff topological abelian groups that is exact as a sequence of abelian groups, $f$ is a closed map and for every compact $K \subseteq C$ there is a compact $B' \subseteq C$ such that $g(B') \supseteq K$. Then, the induced condensed sequence
	\[
		\begin{tikzcd}
			0 \arrow{r} & \underline{A} \arrow{r} & \underline{B} \arrow{r} & \underline{C} \arrow{r} & 0
		\end{tikzcd}
	\]
	is exact.
\end{prop}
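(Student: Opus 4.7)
The strategy is to check the exactness section-wise on extremally disconnected sets, using the preceding remark that in the category of condensed abelian groups a map is a monomorphism/epimorphism iff it is injective/surjective at every section. For extremally disconnected $S$ we have $\underline{A}(S) = C(S,A)$ and similarly for $B,C$, so it suffices to prove that
\[
0 \to C(S,A) \xrightarrow{f_*} C(S,B) \xrightarrow{g_*} C(S,C) \to 0
\]
is an exact sequence of abelian groups for every extremally disconnected $S$.

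Injectivity of $f_*$ is immediate from injectivity of $f$. For exactness in the middle, given $\varphi \in C(S,B)$ with $g_*\varphi = 0$, the image of $\varphi$ lies in $\ker g = f(A)$ as abelian groups; here the closedness hypothesis on $f$ enters: a continuous closed injection of Hausdorff spaces is a homeomorphism onto its image, so $f^{-1}\colon f(A) \to A$ is continuous and $f^{-1}\circ \varphi$ is the desired preimage in $C(S,A)$.

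The main obstacle is the surjectivity of $g_*$, and this is where both the compactness hypothesis and the projectivity of extremally disconnected sets are essential. Given $h \in C(S,C)$, the image $K = h(S)$ is compact in $C$ since $S$ is compact Hausdorff. By hypothesis choose a compact $B' \subseteq B$ with $g(B') \supseteq K$ and set $B'' = B' \cap g^{-1}(K)$; because $C$ is Hausdorff, $K$ is closed and hence $B''$ is a closed subset of the compact set $B'$, so $B''$ is compact Hausdorff. The restriction $g|_{B''}\colon B'' \twoheadrightarrow K$ is a continuous surjection in the category of compact Hausdorff spaces. Since $S$ is extremally disconnected, by \cref{prop-projcondab} (or rather the defining projectivity of $S$ in compact Hausdorff spaces), the map $h\colon S \to K$ lifts through $g|_{B''}$ to some $\tilde h \in C(S,B'') \subseteq C(S,B)$ with $g_*\tilde h = h$.

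Putting these three steps together yields exactness at each extremally disconnected $S$, and hence exactness of the induced sequence of condensed abelian groups. The genuinely non-formal input is the surjectivity step, which requires both hypotheses simultaneously: the compactness condition reduces the lifting problem to a surjection of compact Hausdorff spaces, and projectivity of $S$ then produces the lift.
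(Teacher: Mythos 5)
Your proof is correct and follows essentially the same route as the paper's: section-wise reduction to extremally disconnected $S$, closedness of $f$ giving continuity of $f^{-1}$ for exactness in the middle, and the compactness hypothesis combined with projectivity of $S$ in compact Hausdorff spaces to lift along $g$. The only cosmetic differences are that you record explicitly that a closed continuous injection of Hausdorff spaces is a homeomorphism onto its image, and you cite \cref{prop-projcondab} for projectivity where the definition of extremally disconnected is the more apt reference, as you yourself note parenthetically.
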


\begin{proof}
	Applying the functor and analysing at every section, it is enough to show that 
	\begin{align*}
		0 \xrightarrow{} C(S,A) \xrightarrow{f \circ} C(S,B) \xrightarrow{g \circ} C(S,C) \xrightarrow{} 0
	\end{align*}
	is exact for every extremally disconnected set $S$. Injectivity of the first map is clear.

	Suppose $h \in C(S,B)$ and $gh = 0$. Then, $h(S) \subseteq \ker g = f(A)$. Define $l: S \to A$ by $l = f^{-1} \circ h$. This is continuous because $f$ is a closed map and $f \circ l = h$. This shows $\im (f \circ ) \supseteq \ker (g \circ )$.

	Since $g \circ f = 0$, the reverse inclusion is clear.

	Finally, we must show that $C(S,B)$ surjects onto $C(S,C)$. Consider a map $h: S \to C$. Then, $h(S)$ is compact so for some compact $B' \subseteq  B$, $g(B') \supseteq h(S)$. The image $h(S)$ is a compact subset of a Hausdorff space so it is closed and $g^{-1} (h(S))$ is closed.

	$B$ is Hausdorff as well, so $B'$ is closed and $g^{-1} (h(S)) \cap B'$ (a closed subset of a compact space) must be compact as well. $g^{-1} (h(S)) \cap B'$ surjects onto $h(S)$ and $S$ is projective in the category of compact Hausdorff spaces, so $h$ lifts to a map $S \to g^{-1}(h(S)) \cap B' \subseteq B$.
\end{proof}

\begin{prop}
	\label{prop-condcolim}
	The functor $M \mapsto \underline{M}$ commutes with direct sums of discrete groups, where we endow the direct sum of groups with the discrete topology.
\end{prop}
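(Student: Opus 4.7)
The plan is to produce an explicit inverse to the comparison map $\bigoplus_i \underline{M_i} \to \underline{\bigoplus_i M_i}$ (the one built from the universal property of the direct sum applied to the inclusions $M_i \hookrightarrow \bigoplus_j M_j$ and the functoriality of $T \mapsto \underline{T}$). Since a morphism in $\condab$ is an isomorphism iff it is bijective on every section over an extremally disconnected set, I would reduce the statement to checking bijectivity of this comparison map on $S$-sections for each extremally disconnected $S$.

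First I would compute the left-hand side on sections. Write the direct sum as the filtered colimit $\bigoplus_i \underline{M_i} = \colim_F \bigoplus_{i \in F} \underline{M_i}$ over finite subsets $F \subseteq I$. Each finite direct sum equals the corresponding finite product, and by the remark following the Grothendieck-axioms proposition, both finite limits and filtered colimits in $\condab$ are computed section-wise. Hence evaluating at $S$ yields
\begin{align*}
    \Bigl(\bigoplus_i \underline{M_i}\Bigr)(S) \;=\; \colim_F \prod_{i \in F} C(S, M_i) \;=\; \bigoplus_i C(S, M_i).
\end{align*}
The right-hand side gives $\underline{\bigoplus_i M_i}(S) = C\bigl(S, \bigoplus_i M_i\bigr)$, and under these identifications the comparison map sends a finitely supported tuple $(f_i)_{i\in I}$ to the pointwise sum $s \mapsto \sum_i f_i(s)$. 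Injectivity is immediate since a finitely supported tuple in $\bigoplus_i M_i$ vanishing pointwise has all coordinates zero.

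For surjectivity — which I expect to be the only step requiring real content — I would use compactness of $S$ together with discreteness of $\bigoplus_i M_i$. Any continuous $f: S \to \bigoplus_i M_i$ has image a compact subset of a discrete space, hence finite; each of these finitely many values has finite support in $I$, so the union of their supports is a finite set $F \subseteq I$ and $f$ factors through $\bigoplus_{i \in F} M_i = \prod_{i \in F} M_i$. A continuous map into this finite product corresponds to a tuple in $\prod_{i \in F} C(S, M_i) \subseteq \bigoplus_i C(S, M_i)$, which maps back to $f$ under the comparison. The main (and only) obstacle is this factoring argument; naturality in $S$ is automatic from functoriality of both constructions, so the sectionwise isomorphism assembles into an isomorphism of condensed abelian groups.
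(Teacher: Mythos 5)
Your proof is correct and follows the same basic strategy as the paper: reduce to a section-wise statement and use that $S$ is compact while the target is discrete. The only real structural difference is how each side handles the ``tuple $\Rightarrow$ continuous map'' direction. The paper verifies this directly: given a finitely supported tuple $(g_i)$, it partitions $S$ into finitely many clopen pieces (preimages of singletons, refined by intersection) on which every $g_i$ is constant, and reads off continuity from there. You avoid this by (a) taking the comparison map from the universal property of $\bigoplus$, so its sections are automatically well-defined, and (b) in the surjectivity step factoring $f$ through a \emph{finite} product $\prod_{i\in F} M_i$ and invoking the universal property of products in $\mathrm{Top}$. In exchange, you invoke section-wise computation of filtered colimits and finite products in $\condab$ when identifying $\bigl(\bigoplus_i \underline{M_i}\bigr)(S) = \bigoplus_i C(S,M_i)$ --- something the paper uses implicitly when it ``looks at the problem section-wise.'' Your version is slightly cleaner in its bookkeeping; the paper's clopen-partition argument is a little more hands-on. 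One small remark: you do not actually need the closing sentence about naturality in $S$ --- since you start from a globally defined morphism of condensed abelian groups, section-wise bijectivity over extremally disconnected $S$ already gives the isomorphism.
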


\begin{proof}
	Let $\{M_i\}$ be a collection of discrete spaces and let $M = \bigoplus M_i$. Looking at the problem section-wise, it suffices to prove that
	\begin{align*}
		\bigoplus C(S,M_i) \cong C(S,M)	
	\end{align*}
	where $M$ has the discrete topology. Let $f \in C(S,M)$. The space $S$ is compact so its image onto a discrete space must be finite. It is clear then that $f \in \bigoplus C(S,M_i)$ by taking the projection into each coordinate.

	Let $g \in \bigoplus C(S,M_i)$. We want to prove that the induced map $S \to M$ is continuous. Let $g_i$ be the $i$th coordinate of $g$. Then, $g_i \neq 0$ for finitely many $i$. Each $g_i : S \to M_i$ partitions $S$ into clopen sets (by taking preimages of singletons) and there must be finitely many of them by compactness. By taking intersections, we can refine this to a finite partition of clopen sets such that $g_i$ is constant on each of the clopen sets for all $i$. This induces a continuous map $S \to \bigoplus M_i$.
\end{proof}

\begin{prop}
	Let $(K_i)_{i \in I}$ be a collection of compact Hausdorff spaces. Then,
	\begin{align*}
		\underline{\prod_I K_i} = \prod_I \underline{K_i}
	\end{align*}
\end{prop}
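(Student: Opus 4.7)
The plan is to verify the equality section-wise on extremally disconnected sets, using two facts that are already available: limits of condensed sets (or condensed abelian groups) are computed section-wise, and the universal property of the product topology characterises continuous maps into a product.

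First I would check that the condensed object $\underline{\prod_I K_i}$ is defined. By Tychonoff's theorem the product $\prod_I K_i$ is compact Hausdorff, in particular T1, so the functor $T \mapsto \underline{T}$ applies. Next, for any extremally disconnected set $S$, I would compute both sides:
\begin{align*}
	\underline{\prod_I K_i}(S) = C\Bigl(S, \prod_I K_i\Bigr), \qquad \Bigl(\prod_I \underline{K_i}\Bigr)(S) = \prod_I \underline{K_i}(S) = \prod_I C(S, K_i),
\end{align*}
where the first equality in the second chain uses that arbitrary products (being limits) in the category of condensed sets are computed section-wise, as noted in the remark after the statement of the Grothendieck axioms.

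The remaining step is to identify $C(S, \prod_I K_i)$ with $\prod_I C(S, K_i)$, which is just the universal property of the product topology: a map into $\prod_I K_i$ is continuous if and only if each of its components is, and this bijection is natural in $S$ since it is given by post-composition with the projection maps. Putting these natural identifications together yields an isomorphism of functors on extremally disconnected sets, hence an isomorphism of condensed sets.

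I don't anticipate a real obstacle here; the only point to be mindful of is that one cannot phrase the argument as ``apply a section-wise formula for both sides and compare,'' because on the left we are not taking a limit of condensed objects but applying the functor $\underline{\cdot}$ to a topological product. The bridge between the two pictures is precisely the universal property of the product topology, which is what translates the section-wise formula for the condensed product into the continuous-maps description of $\underline{\prod_I K_i}$.
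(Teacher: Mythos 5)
Your proposal is correct and takes essentially the same route as the paper: reduce to a section-wise verification on extremally disconnected $S$, using that products of condensed sets are computed section-wise (so it suffices to identify $C(S,\prod_I K_i)$ with $\prod_I C(S,K_i)$). The one place you diverge is in justifying that the induced map $S \to \prod_I K_i$ is continuous: you appeal directly to the universal property of the product topology, which is the cleanest and most robust argument; the paper instead tries to show preimages of closed sets are closed via the claimed identity $g^{-1}(U) = \bigcap_i g_i^{-1}(p_i(U))$, which is not actually an equality for a general closed $U$ (only $\subseteq$ holds), so your version is in fact the more careful one.
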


\begin{rem}
	This applies whether the $K_i$ are topological spaces or groups.
\end{rem}

\begin{proof}
	For extremally disconnected $S$, it is enough to show that
	\begin{align*}
		C(S, \prod_I K_i) = \prod_I C(S, K_i)
	\end{align*}
	Let $p_j :\prod_I K_i \to K_j$ be the projection map. Let $f: S \to \prod_I K_i$. Then, $(p_i \circ f)_{i \in I} \in \prod_I C(S, K_i)$.

	Now let $(g_i)_{i \in I} \in \prod_I C(S,K_i)$. We claim that the natural function $g: S \to \prod_I K_i$ is continuous.

	Let $U \subseteq \prod_I K_i$ be closed. Then, 
	\begin{align*}
		g^{-1} (U) = \bigcap _{i \in I} (g_i^{-1} \circ p_i)(U)
	\end{align*}
	$p_i$ is a surjection from a compact space to a Hausdorff space, so it is a closed map. Hence, $p_i (U)$ is closed. By continuity of $g_i$, the preimage $(g_i^{-1} \circ p_i)(U)$ is closed and hence the intersection is closed. This concludes the proof.
\end{proof}

\subsection{Resolutions of condensed abelian groups}

Derived functors and the derived category will be built through projectives, since the category of condensed abelian groups has enough projectives (\cite{condensed} Theorem 2.2) but not any non-zero injectives (there is no published proof in the literature of this fact, but Peter Scholze provided a proof in a Math Overflow thread).

Projectives $\ZZ[\underline{S}]$ for an extremally disconnected set $S$ behave in a similar way to the projectives in the category of abelian groups and we will use this to build certain resolutions.

\begin{theorem}[\cite{condensed}, Theorem 4.5]
	Let $A$ be an abelian group. Then, there is a projective resolution of $A$
	\begin{align*}
		\cdots \to \bigoplus _{j=1}^{n_i} \ZZ[A^{r_{i,j}}] \to \cdots \to \bigoplus _{j=1}^{n_1} \ZZ[A^{r_{1,j}}] \to A \to 0
	\end{align*}
	that is functorial in $A$ and the $r_{i,j}$ are independent of the choice of $A$.
\end{theorem}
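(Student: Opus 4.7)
The plan is to build the resolution inductively, exploiting the fact that the relations between chosen generators at each stage are imposed by the universal abelian-group axioms and therefore admit descriptions as polynomial formulas in the coordinates of $A^r$ for suitable $r$. This universality will simultaneously supply functoriality in $A$ and independence of the $r_{i,j}$ from $A$.

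I would start with the augmentation $\epsilon_A: \ZZ[\underline{A}] \to \underline{A}$ sending the formal generator $[a]$ to $a$. Its kernel is generated by the canonical elements $[a]+[b]-[a+b]$ and $[0]$, encoding the bilinearity and unit relations, and this yields a functorial surjection $\ZZ[\underline{A^2}] \oplus \ZZ \twoheadrightarrow \ker(\epsilon_A)$. Iterating, the kernel of this new map consists of further $\ZZ$-linear relations arising from the abelian-group axioms, and these are again parametrised by universal polynomial expressions in the coordinates of $A^r$ for some finite list of ranks $r_{2,j}$. One obtains a surjection $\bigoplus_{j=1}^{n_2} \ZZ[\underline{A^{r_{2,j}}}]$ onto this kernel, and continues in the same vein.

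Functoriality in $A$ is automatic because every differential is defined purely through formal sums and the group operations of $A$. Projectivity of each term reduces to projectivity of $\ZZ[\underline{A^r}]$ in $\condab$: since $A^r$ is discrete, $\underline{A^r}$ decomposes as a coproduct of copies of the point (evaluated on any profinite $S$, both sides compute locally constant maps to $A^r$), so applying the left adjoint $\ZZ[-]$ gives $\ZZ[\underline{A^r}] \cong \bigoplus_{A^r} \ZZ$. Each $\ZZ = \ZZ[\underline{*}]$ is projective by \cref{prop-projcondab}, and a direct sum of projectives is projective in any abelian category.

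The main obstacle is the inductive step: one must verify that at every stage only finitely many summands of $\ZZ[\underline{A^{r_{i,j}}}]$ of universally bounded rank suffice to surject onto the kernel, and that the ranks and multiplicities can be chosen functorially. This is the combinatorial heart of the Breen--Deligne resolution and requires substantial work, either via a delicate direct induction on the relations or, more elegantly, through results on the stable homotopy of Eilenberg--MacLane spaces that produce such a universal resolution abstractly.
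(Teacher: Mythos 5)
The paper does not supply a proof of this statement: the theorem is quoted verbatim from Clausen--Scholze \cite{condensed}, Theorem 4.5 (the Breen--Deligne resolution), and the remark immediately following it explicitly declines even to describe the differentials. So there is no in-paper proof to compare against; your sketch has to be judged against the theorem itself.

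Your base case is correct: the kernel of $\ZZ[A]\to A$ is the subgroup generated by the elements $[a]+[b]-[a+b]$, functorially in $A$. (The extra $\oplus\,\ZZ$ summand you add is unnecessary, since $[0]=[0]+[0]-[0+0]$ already lies in the image of $\ZZ[A^2]$; accordingly, the resolution displayed in the adjacent remark begins simply $\ZZ[A^2]\to\ZZ[A]\to A$.) The gap, which you acknowledge yourself, is the inductive step. The sentence asserting that the kernel at the next stage is ``again parametrised by universal polynomial expressions in the coordinates of $A^r$ for some finite list of ranks'' is not an observation but a paraphrase of the theorem's conclusion. The hard content is exactly that a \emph{finite} set of ranks $r_{i,j}$, the same for every $A$, with differentials that are natural in $A$, suffices at \emph{each} stage. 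There is no elementary bookkeeping that shows the relations among relations remain within this constrained shape, nor that the choices can be made compatibly across all $A$ at once; without a concrete mechanism --- the stable homology of Eilenberg--MacLane spaces / stable derived functors of $A\mapsto\ZZ[A]$ in Breen's original argument, or Scholze's explicit MacLane-style construction used in the Liquid Tensor Experiment --- the induction does not close.

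Two secondary points. The theorem is stated in $\ab$, not in $\condab$: $A$ is an ordinary (discrete) abelian group and $\ZZ[A^r]$ is the ordinary free abelian group on the set $A^r$, so every term is free and projectivity is automatic. Your argument that $\ZZ[\underline{A^r}]\cong\bigoplus_{A^r}\underline{\ZZ}$ is projective in $\condab$ (via $\underline{X}\cong\coprod_{x\in X}\underline{*}$ after sheafification and the fact that $\ZZ[-]$ is a left adjoint) is correct, but it answers a question the statement does not pose; the condensed analogue is \cref{thm-rescondab}, where for a general condensed $M$ the terms $\ZZ[M^{r_{i,j}}]$ are precisely \emph{not} claimed to be projective. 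Finally, note that the phrase ``the $r_{i,j}$ are independent of the choice of $A$'' carries essentially all the weight of the theorem: it is easy to build some $A$-dependent free resolution of $A$; constructing one that is simultaneously of this constrained universal form and natural in $A$ is the whole point.
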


\begin{rem}
	We do not provide a description of the differentials but this will not be necessary for any of our computations. The first few can be found in \cite{cristalline} and are of the form
	\begin{align*}
		\ZZ[A^4] \oplus \ZZ[A^3] \oplus \ZZ[A^3] \oplus \ZZ[A^2] \oplus \ZZ[A] \to \ZZ[A^3] \oplus \ZZ[A^2] \to \ZZ[A^2] \to \ZZ[A] \to A
	\end{align*}
	for an arbitrary abelian group $A$ and \cite[Theorem 4.10]{condensed} shows that this may be extended to a resolution of the desired form.
\end{rem}

\begin{theorem}
	\label{thm-rescondab}
	Let $M$ be a condensed abelian group and let $r_{i,j}$ be defined as above. Then, there is a resolution of $M$ of the form
	\begin{align*}
		\cdots \to \bigoplus _{j=1}^{n_i} \ZZ[M^{r_{i,j}}] \to \cdots \to \bigoplus _{j=1}^{n_1} \ZZ[M^{r_{1,j}}] \to M \to 0
	\end{align*}
\end{theorem}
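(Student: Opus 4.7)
The plan is to lift the functorial resolution of the previous theorem to condensed abelian groups by applying it section-wise and then sheafifying. For each extremally disconnected set $S$, the abelian group $M(S)$ admits the functorial resolution
\begin{align*}
\cdots \to \bigoplus_{j=1}^{n_i} \ZZ[M(S)^{r_{i,j}}] \to \cdots \to \bigoplus_{j=1}^{n_1} \ZZ[M(S)^{r_{1,j}}] \to M(S) \to 0,
\end{align*}
and because products are computed section-wise in condensed abelian groups, $M(S)^{r_{i,j}} = M^{r_{i,j}}(S)$. The naturality of this resolution in the abelian-group variable lets these terms assemble into a complex of presheaves of abelian groups on extremally disconnected sets whose sections at every $S$ are exact.

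I would then apply the sheafification functor --- the left adjoint to the inclusion of $\condab$ into presheaves of abelian groups, which simultaneously enforces the disjoint-union and Kan extension conditions. By the very definition of the free condensed abelian group in \cref{prop-projcondab}, the sheafification of $S \mapsto \ZZ[T(S)]$ is $\ZZ[T]$ for any condensed set $T$, and sheafification commutes with direct sums as it is a left adjoint, so the $i$-th term sheafifies to $\bigoplus_j \ZZ[M^{r_{i,j}}]$. The rightmost presheaf is already the condensed abelian group $M$, hence equal to its sheafification.

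The crucial step is the exactness of sheafification: right exactness is automatic as it is a left adjoint, while left exactness is the standard fact that the associated sheaf functor for a Grothendieck topology preserves finite limits, extended to the Kan extension condition by choosing a suitable cardinal. This is where the real work lies, because the presheaves $S \mapsto \ZZ[M^{r_{i,j}}(S)]$ are not themselves condensed abelian groups --- we have $\ZZ[A \sqcup B] \neq \ZZ[A] \times \ZZ[B]$ --- so one cannot simply read off exactness section-wise in $\condab$. Once sheafification's exactness is granted, applying it to the section-wise exact complex of presheaves yields the required resolution in $\condab$.
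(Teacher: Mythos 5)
Your proposal follows essentially the same route as the paper: apply the functorial resolution section-wise to $M(S)$, use naturality of the resolution to assemble the terms into a complex of presheaves that is exact at every section, and then sheafify, invoking the exactness of sheafification and the fact that $\ZZ[T]$ is the sheafification of $S \mapsto \ZZ[T(S)]$. One small correction in your side remark: the reason $S \mapsto \ZZ[M(S)]$ is only a presheaf and not already a condensed abelian group is that $\ZZ[M(S_1) \times M(S_2)] \neq \ZZ[M(S_1)] \times \ZZ[M(S_2)]$ (recall $M(S_1 \sqcup S_2) = M(S_1) \times M(S_2)$ since $M$ is condensed); the formula you wrote, $\ZZ[A \sqcup B] \neq \ZZ[A] \times \ZZ[B]$, is actually an equality, since the free abelian group functor sends coproducts of sets to coproducts, which for two factors coincide with products.
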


\begin{rem}
	This resolution is not necessarily projective, since the objects $\ZZ[M^{r_{i,j}}]$ might not be projective. $\ZZ[\underline{S}]$ is projective for an extremally disconnected set $S$ but $\ZZ[T]$ might not be for a general condensed set $T$.
\end{rem}

\begin{proof}
	Define $Z[M]$ as the presheaf $S \mapsto \ZZ[M(S)]$, so that the sheafification of $Z[M]$ is $\ZZ[M]$. Then, for any extremally disconnected set we have a resolution
	\begin{align*}
		\cdots \to \bigoplus _{j=1}^{n_i} Z[(M(S))^{r_{i,j}}] \to \cdots \to \bigoplus _{j=1}^{n_1} Z[(M(S))^{r_{1,j}}] \to M(S) \to 0
	\end{align*}
	$M \mapsto M(S)$ commutes with limits and colimits so we get a resolution
	\begin{align*}
		\cdots \to \bigoplus _{j=1}^{n_i} Z[M^{r_{i,j}}] (S) \to \cdots \to \bigoplus _{j=1}^{n_1} Z[M^{r_{1,j}}] (S) \to M(S) \to 0
	\end{align*}
	The functoriality of the resolution ensures that for every map $T \to S$ (and hence a map $M(S) \to M(T)$) of extremally disconnected sets, the resolutions of $M(S)$ and $M(T)$ form a commutative ladder diagram, so each of the maps induces a natural transformation. We get a sequence of presheaves
	\begin{align*}
		\cdots \to \bigoplus _{j=1}^{n_i} Z[M^{r_{i,j}}] \to \cdots \to \bigoplus _{j=1}^{n_1} Z[M^{r_{1,j}}] \to M \to 0
	\end{align*}
	This sequence is exact at every section so the sequence must be exact. We may now sheafify and we are done because sheafification is an exact functor.
\end{proof}

It is possible to explicitly construct the projective resolution of a condensed abelian group of the form $\ZZ[\underline{S}]$, where $S$ is a compact Hausdorff space.

\begin{theorem}
	\label{thm-projres}
	Let $S$ be a compact Hausdorff space and let $S_\bullet \to S$ be a simplicial hypercover of $S$ by extremally disconnected spaces. Then,
	\begin{align*}
		\dots \to \ZZ[\underline{S_1}] \to \ZZ[\underline{S_0}] \to \ZZ[\underline{S}] \to 0
	\end{align*}
	is a projective resolution of $\ZZ[\underline{S}]$.
\end{theorem}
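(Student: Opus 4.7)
The plan is to verify projectivity of each term directly and then reduce exactness to a classical fact about contractible augmented simplicial sets via sheafification. Each $\ZZ[\underline{S_n}]$ is projective by \cref{prop-projcondab}, since $S_n$ is extremally disconnected; the substantive content is therefore the exactness of the Moore complex
\begin{align*}
\dots \to \ZZ[\underline{S_1}] \to \ZZ[\underline{S_0}] \to \ZZ[\underline{S}] \to 0.
\end{align*}

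By \cref{prop-projcondab}, each $\ZZ[\underline{T}]$ is the sheafification of the presheaf $R \mapsto \ZZ[\underline{T}(R)] = \ZZ[C(R, T)]$. Since sheafification is exact, it suffices to establish exactness of the presheaf complex $R \mapsto \ZZ[C(R, S_\bullet)] \to \ZZ[C(R, S)]$ at every extremally disconnected $R$.

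Fix such an $R$. Extremally disconnected spaces are projective in compact Hausdorff spaces by definition, so $C(R, -)$ preserves surjections; it also preserves limits, and hence the matching objects implicit in the definition of a simplicial hypercover. Consequently $C(R, S_\bullet) \to C(R, S)$ is a hypercover of the set $C(R, S)$ regarded as a constant simplicial set. Decomposing over each point $x \in C(R, S)$ then exhibits the corresponding fiber as a contractible augmented simplicial set, so the classical fact that the free abelian group functor sends a contractible augmented simplicial set to an exact chain complex (via the alternating face-map differential) produces exactness of
\begin{align*}
\dots \to \ZZ[C(R, S_1)] \to \ZZ[C(R, S_0)] \to \ZZ[C(R, S)] \to 0.
\end{align*}

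The main obstacle is justifying the transfer from a simplicial hypercover in compact Hausdorff spaces to a hypercover of the underlying set after applying $C(R, -)$; this is where projectivity of $R$ in compact Hausdorff spaces is used in an essential way, both to turn surjections of matching objects into surjections of hom-sets and to split the augmented simplicial set into its fibers. Once the presheaf sequence is exact at every ED section, applying the exact sheafification functor produces the desired exact sequence of condensed abelian groups, and combined with the projectivity already noted this gives the claimed projective resolution.
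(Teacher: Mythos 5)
Your proof is correct, and it takes a genuinely more hands-on route than the paper's. The paper dispatches exactness with a single citation to a general theorem in the theory of abelian sheaves on sites (Stacks Tag 01GF), a result whose proof in that generality proceeds rather indirectly (for instance via \v{C}ech comparison and spectral sequences) because an arbitrary site need not have enough ``points'' or projectives. You instead exploit the specific feature of the condensed setting that makes it so pleasant: extremally disconnected sets are projective in compact Hausdorff spaces, so $C(R,-)$ takes the hypercover $S_\bullet \to S$ to a hypercover of sets, which you then split fiberwise over the set $C(R,S)$ to reduce to the contractibility of a hypercover of a point and the classical acyclicity of the normalized chains of a contractible simplicial set. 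Finally you push through sheafification, which is exact. What the paper's citation buys is brevity and generality; what your argument buys is a self-contained, elementary proof that makes visible exactly where the projectivity of extremally disconnected sets is used, and why exactness in this category can be checked section-wise at all -- a point the bare citation obscures. If you wanted to tighten the writeup, you might make explicit that the matching objects are finite limits and that the fiberwise decomposition of $\ZZ[C(R,S_\bullet)]$ into $\bigoplus_{x \in C(R,S)} \ZZ[C(R,S_\bullet)_x]$ uses that $\ZZ[-]$ turns disjoint unions into direct sums; both points are implicit and easy, but stating them would close the remaining small gaps.
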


\begin{rem}
	By condensing and applying the free condensed abelian group functor, we would get a simplicial condensed abelian group rather than a complex. However, we obtain the desired complex by extracting the Moore complex in the same way as the Dold-Kan correspondence functor.
\end{rem}

\begin{proof}
	This is a standard fact about the category of abelian sheaves on a site \cite[\href{https://stacks.math.columbia.edu/tag/01GF}{Tag 01GF}]{stacks-project}.

	Hence, the complex is exact. Finally, the $\ZZ[\underline{S_i}]$ are projective by \cref{prop-projcondab}.
\end{proof}

\subsection{Condensed cohomology}

We will be working with different kinds of cohomology throughout this essay, which will in some cases coincide.

\begin{defi}
	Let $S$ be a compact Hausdorff space. Then, we have the following ways of computing the cohomology $H^i (S, M)$ for some discrete group $M$.
	\begin{enumerate}
		\item Sheaf cohomology: the right derived functor of $\Gamma: \ab(S) \to \ab$ defined by $\Gamma (F) = F(S)$. Then, $H^i_{\text{sheaf}}(S,M) = R^i \Gamma (M)$, where $M$ is considered as the constant sheaf with value $M$.
		\item \v{C}ech cohomology: referred to as $\Hicech (S,M)$.
	\end{enumerate}
	They are isomorphic for compact Hausdorff $S$ \cite[Theorem 5.10]{faisceaux}.
\end{defi}

Condensed abelian groups allow us to give a third definition of cohomology.

\begin{defi}
	For a T1 space $S$ and an abelian group $M$, we define condensed cohomology as
	\begin{align*}
		H^i (S, M) = \ext^i_{\condab} (\ZZ[\underline{S}], \underline{M})
	\end{align*}
	where $M$ is given the discrete topology.

	We can also write more generally for a condensed abelian group $A$ and a condensed set $T$,
	\begin{align*}
		H^i (T,A) = \ext^i (\ZZ[T], A)
	\end{align*}
\end{defi}

\begin{rem}
	By \cref{thm-projres}, we have a projective resolution of the condensed abelian group $\ZZ[\underline{S}]$, for a compact Hausdorff $S$, using a simplicial hypercover $S_\bullet \to S$. The condensed cohomology $H^i (S, M)$ would then be computed by the complex $\Hom (\ZZ[\underline{S_i}], M) \cong M(S_i)$ by \cref{prop-projcondab}. This means that the complex
	\begin{align*}
		0 \to M(S_0) \to M(S_1) \to M(S_2) \to \dots
	\end{align*}
	has the groups $H^i(S,M)$ as its cohomology.
\end{rem}

\begin{theorem}[\cite{condensed}, Theorem 3.2, 3.3]
	\label{thm-condcohomology}
	Let $S$ be a compact Hausdorff space. Then,
	\begin{enumerate}
		\item $H^i (S, M) \cong \Hicech(S, M)$
		\item $H^i (S, \RR) \cong 0$ for $i > 0$ and $H^0 (S, \RR) \cong C(S, \RR)$.
	\end{enumerate}
\end{theorem}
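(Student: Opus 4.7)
The plan is to apply \cref{thm-projres} to obtain a projective resolution $\ZZ[\underline{S_\bullet}] \to \ZZ[\underline{S}]$ from a simplicial hypercover $S_\bullet \to S$ by extremally disconnected spaces, and then to identify the resulting Ext complex with a cohomology we can compute classically. Using the $\Hom$-adjunction from \cref{prop-projcondab}, for any condensed abelian group $N$ we obtain
\[
\ext^i_{\condab}(\ZZ[\underline{S}], N) \;\cong\; H^i\bigl(N(S_\bullet)\bigr),
\]
where the right-hand side denotes the cohomology of the Moore complex of the cosimplicial abelian group $N(S_\bullet)$ (compare the remark after \cref{thm-projres}). So the task reduces to computing the cohomology of $C(S_\bullet, M)$ for part (1) and of $C(S_\bullet, \RR)$ for part (2).

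For part (1), I would observe that $C(S_\bullet, M)$ is exactly the hypercover Čech complex of $S$ with coefficients in the constant sheaf $M$. By Verdier's hypercovering theorem this complex computes $\Hisheaf(S, M)$, which for compact Hausdorff $S$ already equals $\Hicech(S, M)$ by the comparison cited in the definitions above.

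For part (2) the same identification says that $H^i(S, \RR)$ agrees with the sheaf cohomology of $S$ with coefficients in the sheaf $\mathcal{C}_\RR$ of continuous real-valued functions, $U \mapsto C(U, \RR)$. Since $S$ is compact Hausdorff and therefore paracompact, $\mathcal{C}_\RR$ admits partitions of unity subordinate to any open cover and is in particular a fine (hence soft) sheaf, so all of its higher sheaf cohomology vanishes. For $i = 0$ the Moore complex computes $\ker\bigl(C(S_0, \RR) \rightrightarrows C(S_1, \RR)\bigr)$, and the sheaf axiom for $\underline{\RR}$ applied to the cover $S_0 \to S$ identifies this kernel with $\underline{\RR}(S) = C(S, \RR)$.

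The main obstacle is passing from the condensed picture (the complex $N(S_\bullet)$ living over the pro-étale site) to classical sheaf cohomology on the small site of open subsets of $S$; justifying this step requires Verdier's hypercovering theorem and a comparison of the relevant sites. A secondary but essential ingredient for part (2) is the standard fact that the sheaf of continuous $\RR$-valued functions on a paracompact Hausdorff space is fine, which reduces via Urysohn's lemma to the existence of partitions of unity.
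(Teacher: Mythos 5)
The paper gives no proof of this statement; it is quoted directly from \cite{condensed} (Theorems 3.2 and 3.3). So the comparison here must be with the proof in that reference.

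Your outline for part (1) is essentially the standard one: pass from the projective resolution $\ZZ[\underline{S_\bullet}]$ to the complex $M(S_\bullet) = C(S_\bullet, M)$, recognise it as a hypercover \v{C}ech complex, and invoke Verdier plus the classical \v{C}ech--sheaf comparison for compact Hausdorff spaces. That is the right shape, though essentially all of the substance is compressed into the word ``Verdier'': the hypercover lives in the site of compact Hausdorff spaces over $S$, not the small site $S_{\mathrm{top}}$, and the passage between the two is exactly what is being proved.

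For part (2) there is a genuine gap. You claim that ``the same identification'' shows $H^i(S,\RR)$ agrees with the sheaf cohomology of $\mathcal{C}_\RR$, $U \mapsto C(U,\RR)$, on $S_{\mathrm{top}}$, and then you conclude by observing $\mathcal{C}_\RR$ is fine. But the identification in part (1) is the comparison, for the \emph{constant} sheaf $M$, between the hypercover complex on the condensed/compactly-covered site and \v{C}ech cohomology on $S_{\mathrm{top}}$ --- this is itself the content of the theorem being proved, not a general-purpose tool. It does not hand you the analogous comparison for the non-constant sheaf $\underline{\RR}$: on the condensed side $\underline{\RR}$ assigns $S' \mapsto C(S',\RR)$, and the relationship between this sheaf on the big site and the sheaf $\mathcal{C}_\RR$ of continuous functions on the \emph{small} site of open subsets of $S$ is not automatic; one would have to analyse the pushforward along the comparison morphism of sites and justify that it preserves cohomology, which is an additional nontrivial step your sketch does not address. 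Clausen--Scholze's actual argument for part (2) does not route through fine sheaves on $S_{\mathrm{top}}$ at all: it works directly with the complex $C(S_\bullet,\RR)$. The key lemma is that for any surjection $S' \twoheadrightarrow S$ of compact Hausdorff spaces, the pullback $C(S,\RR) \hookrightarrow C(S',\RR)$ admits a $C(S,\RR)$-linear retraction (constructed by averaging over fibres, using partitions of unity or Riesz-type measures on the fibres); this retraction furnishes a contracting homotopy for the \v{C}ech complex and proves exactness in positive degrees. So partitions of unity do appear, but to build a splitting of a specific map of Banach spaces, not to show that $\mathcal{C}_\RR$ is a fine sheaf on $S$.
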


\section{Enriched Hom functors}
\label{section-enrichedhom}

\subsection{Internal Hom for condensed sets}

The category of sheaves on a site $\mathcal{C}$ naturally has an enriched hom functor by considering
\begin{align*}
	{\mathscr{H}\kern -.5pt om} (\mathcal{F}, \mathcal{G}) (S) = \Hom(\mathcal{F}| _{\mathcal{C}_{/S}}, \mathcal{G}|_{\mathcal{C}_{/S}})
\end{align*}

where $\mathcal{C}_{/S}$ is the slice category. We provide the following more practical definition. A discussion of the equivalence between definitions in the presheaf category can be found in \cite{nlab-internalhom}.

In this subsection we study this construction for condensed sets as well as some applications.

\begin{prop}
	\label{prop-condsethom}
	The category of condensed sets has a closed symmetric monoidal structure, with the usual product as tensor product and internal hom given by
	\begin{align*}
		\condsethom (A,B)(S) = \homcondset (A \times \underline{S}, B)
	\end{align*}
\end{prop}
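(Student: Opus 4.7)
The plan is to verify that the formula $S \mapsto \homcondset(A \times \underline{S}, B)$ actually defines a condensed set, and then to exhibit the tensor-hom adjunction $\homcondset(A \times B, C) \cong \homcondset(A, \condsethom(B, C))$ natural in $A, B, C$. The symmetric monoidal structure on the cartesian product is automatic (it is cartesian), so the content lies in the ``closed'' clause.

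For well-definedness, I would check the three conditions in the definition of a condensed set applied to the functor $S \mapsto \homcondset(A \times \underline{S}, B)$. Functoriality in $S$ is immediate from functoriality of $\underline{(-)}$. The value at $\emptyset$ is a singleton because $\underline{\emptyset}$ is the initial condensed set (its sections are empty on nonempty ED), so $A \times \underline{\emptyset}$ is initial and admits a unique map to $B$. For $S_1 \sqcup S_2$, I would first observe that $\underline{S_1 \sqcup S_2} \cong \underline{S_1} \sqcup \underline{S_2}$, since the Yoneda image of a finite disjoint union in ED is a coproduct in condensed sets; then use that products distribute over coproducts (condensed sets form a topos); and finally split $\Hom$ out of a coproduct into a product. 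The Kan extension clause is obtained by enlarging a single uncountable strong limit cardinal $\kappa$ that simultaneously witnesses the Kan condition for $A$ and $B$ and is large enough that all maps $A \times \underline{S} \to B$ arise from $\kappa$-small data, in the same spirit as \cite[Remark 2.13]{condensed}.

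For the adjunction I would build the bijection explicitly. Given $f : A \times B \to C$, define $\tilde f : A \to \condsethom(B, C)$ by sending $a \in A(S)$, viewed by Yoneda as $\hat a : \underline{S} \to A$, to the composite $B \times \underline{S} \xrightarrow{\mathrm{id} \times \hat a} B \times A \xrightarrow{\sigma} A \times B \xrightarrow{f} C$, where $\sigma$ is the symmetry. Conversely, given $g : A \to \condsethom(B, C)$, define $\check g : A \times B \to C$ section-wise by sending $(a, b) \in A(S) \times B(S) = (A \times B)(S)$ to the element of $C(S)$ corresponding under Yoneda to the composite $\underline{S} \xrightarrow{(\hat b,\, \mathrm{id})} B \times \underline{S} \xrightarrow{g_S(a)} C$. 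Naturality in $S$ and the fact that the two assignments are mutually inverse is a formal diagram chase using Yoneda and the universal property of the product. The main obstacle is the cardinal bookkeeping for the Kan extension condition: one must exhibit a single $\kappa'$ for which $\condsethom(A, B)$ is left-Kan-extended from $\kappa'$-small ED, and it is here that the argument is least formal. Once that is handled, the adjunction and the remaining monoidal axioms follow routinely.
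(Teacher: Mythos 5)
Your proposal is correct and takes essentially the same route as the paper: an explicit construction of the tensor--hom bijection rather than an appeal to an abstract adjoint functor theorem, and unwinding your Yoneda-phrased composites section-by-section recovers precisely the paper's formulas $f_S(p,m) = (g_S(p))_S(m,\mathrm{id})$ and $(g_S(p))_T(m,s) = f_T(s^*(p),m)$. The paper skips the well-definedness checks (value on $\emptyset$, finite disjoint unions, Kan extension) that you rightly flag, and in exchange carries out in full the point you leave as ``a formal diagram chase,'' namely the naturality square yielding $g(s^*(p))(m,\mathrm{id}) = g(p)(m,s)$, which is the only non-tautological step in verifying the two assignments are mutually inverse.
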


\begin{proof}
	This could be proved in terms of an abstract adjoint functor theorem, but instead we provide an explicit construction of the isomorphism that will be necessary in a later result.

	Let $M,N,P$ be condensed sets and let $g: P \to \condsethom(M,N)$. For extremally disconnected $S$, $(p,m) \in P(S) \times M(S)$ and $\operatorname{id}: S \to S$ the identity function, we can define
	\begin{align*}
		f_S (p,m) = (g_S (p))_S (m \times \operatorname{id})
	\end{align*}
	where the subscript of an extremally disconnected set on a sheaf morphism indicates the induced map on sections. This is a map $P \times M \to N$.

	In a more schematic way, this can be understood as
	\begin{align*}
		g \mapsto (p\times m  \mapsto (g(p)(m, \operatorname{id})))
	\end{align*}
	Let $f: P \times M \to N$. For extremally disconnected $S,T$, $p \in P(S)$ and $(m,s) \in M(T) \times C(T,S)$.
	\begin{align*}
		(g_S(p))_T (m,s) = f(s^*(p), m)
	\end{align*}
	where $s^*$ is the induced map $P(S) \to P(T)$. This is a map $P \to \condsethom(M,N)$.

	More schematically,
	\begin{align*}
		f \mapsto (p \mapsto (m \times s \mapsto f(s^* (p), m)))
	\end{align*}
	We claim that the above induces an isomorphism of hom sets.

	On one side we have for $f : P \times M \to N$, by composing the two above constructions,
	\begin{align*}
		f \mapsto (p \times m \mapsto f(\operatorname{id}^* (p), m)) 
	\end{align*}
	But clearly $\operatorname{id}^* (p) = p$ so this gives
	\begin{align*}
		f \mapsto (p \times m \mapsto f(p,m)) = f
	\end{align*}
	In the other direction, we have
	\begin{align*}
		g \mapsto (p \mapsto (m \times s \mapsto g(s^* (p))(m, \operatorname{id})))
	\end{align*}
	We only need to check that $g(s^* (p)) (m, \operatorname{id}) = g(p) (m , s)$.

	Writing this more precisely by making the extremally disconnected sets explicit, we need to prove $(g_T (s^* (p)))_T (m, \operatorname{id}) = (g_S (p))_T (m, s)$ for a continuous map $s: T \to S$. We will abuse notation and write $s^*$ for the two  maps $P(S) \to P(T)$ and $\Hom(M \times \underline{S}, N) \to \Hom (M \times \underline{T}, N)$. By the definition of a natural transformation we have a commutative diagram
	\[
		\begin{tikzcd}
			P(S) \arrow{r}{g} \arrow{d}{s^*} & \Hom (M \times \underline{S}, N) \arrow{d}{s^*} \\
			P(T) \arrow{r}{g} & \Hom (M \times \underline{T}, N)
		\end{tikzcd}
	\]
	This tells us that $(g_T (s^* (p)))_T (m, \operatorname{id}) = (g_S (p))_T s^*(m, \operatorname{id})$. Looking at the problem at the given section, we have $s^*$ inducing a map $M(T) \times C(T,T) \to M(T) \times C(T,S)$. This is clearly given by composition with $s$ so it follows that
	\begin{align*}
		(g_T (s^* (p)))_T (m, \operatorname{id}) = (g_S (p))_T s^* (m, \operatorname{id}) = (g_S (p))_T (m, s)
	\end{align*}
	as required.
\end{proof}

The following gives us a more direct way of computing this enriched hom functor.

\begin{prop}
	Let $A$ and $B$ be Hausdorff topological spaces, where $A$ is compactly generated. Then,
	\begin{align*}
		\condsethom (\underline{A}, \underline{B}) = \underline{C(A,B)}
	\end{align*}
	where $C(A,B)$ is given the compact-open topology.
\end{prop}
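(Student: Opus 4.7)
The plan is to evaluate both sides at an extremally disconnected $S$ and exhibit a natural bijection. By \cref{prop-condsethom}, $\condsethom(\underline{A}, \underline{B})(S) = \homcondset(\underline{A} \times \underline{S}, \underline{B})$. Since products of condensed sets are computed sectionwise and the condensation functor preserves products of T1 spaces, I can rewrite this as $\homcondset(\underline{A \times S}, \underline{B})$. The space $S$ is compact Hausdorff hence locally compact, so by the standard fact that the product of a compactly generated Hausdorff space with a locally compact Hausdorff space is compactly generated Hausdorff, $A \times S$ is compactly generated Hausdorff, and \cite[Proposition 2.15]{condensed} then identifies $\homcondset(\underline{A \times S}, \underline{B})$ with $C(A \times S, B)$. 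On the other side, the compact-open topology on $C(A,B)$ is Hausdorff (since $B$ is) hence T1, so $\underline{C(A,B)}(S) = C(S, C(A,B))$. The problem therefore reduces to producing a natural bijection $C(A \times S, B) \cong C(S, C(A,B))$, i.e., the classical exponential law.

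For the forward direction I would send $f : A \times S \to B$ to the function $s \mapsto f(-,s)$. Each slice is continuous because the inclusion $A \hookrightarrow A \times S$, $a \mapsto (a,s)$, is, and continuity of the induced map $S \to C(A,B)$ for the compact-open topology is a routine application of the tube lemma to $K \times \{s\} \subseteq f^{-1}(U)$ for the subbasic open sets $W(K,U) = \{h : h(K) \subseteq U\}$.

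The hard part will be the reverse direction: given continuous $g : S \to C(A,B)$, I need $\hat{g}(a,s) = g(s)(a)$ to define a continuous map $A \times S \to B$. The obstacle is that $A$ is only assumed compactly generated, not locally compact, so the evaluation map $C(A,B) \times A \to B$ is not known to be continuous and the usual exponential law does not directly apply. My plan is to exploit compact generation: since $S$ is locally compact Hausdorff and $A$ is compactly generated Hausdorff, $A \times S$ is compactly generated, so it suffices to check continuity of $\hat{g}|_{K \times S}$ for each compact $K \subseteq A$. On such a subspace $K$ is compact Hausdorff (hence locally compact), so the classical exponential law applies and reduces the continuity of $\hat{g}|_{K \times S}$ to the continuity of the composite $S \xrightarrow{g} C(A,B) \to C(K,B)$, where the second arrow is restriction along $K \hookrightarrow A$ and is continuous for the compact-open topologies since subbasic opens $W(L,U) \subseteq C(K,B)$ pull back to $W(L,U) \subseteq C(A,B)$. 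Finally, naturality of the bijection in $S$ is a direct unwinding along maps $T \to S$ of extremally disconnected sets, which upgrades the sectionwise bijections to an isomorphism of condensed sets.
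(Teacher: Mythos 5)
Your proof follows the same overall route as the paper --- take $S$-valued points, identify $\homcondset(\underline{A}\times\underline{S},\underline{B})$ with $C(A\times S,B)$ via full faithfulness, identify the other side with $C(S,C(A,B))$, and close with the exponential law --- but you do something the paper does not: you actually prove the exponential law in the form it is needed here. The paper simply cites ``the hom-tensor adjunction of the compact-open topology,'' i.e.\ the version stated in its preceding remark, $C(X\times Y,Z)\cong C(X,C(Y,Z))$ for $Y$ \emph{locally compact} Hausdorff. Applied with $Y=A$ that hypothesis is not available: $A$ is only compactly generated, and the uncurrying direction (continuity of $\hat g(a,s)=g(s)(a)$) genuinely relies on continuity of evaluation $C(A,B)\times A\to B$, which can fail without local compactness. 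You correctly flag that as the hard part and fill the gap by a standard compactly-generated argument: check continuity of $\hat g$ on $K\times S$ for each compact $K\subseteq A$ (enough, since $A\times S$ is compactly generated when $S$ is locally compact Hausdorff and every compact $L\subseteq A\times S$ lies in some $\pi_A(L)\times S$), then curry with respect to the compact Hausdorff $K$ and use continuity of the restriction map $C(A,B)\to C(K,B)$. This is correct and is essentially the proof one would want; the paper's version is accurate in outcome but leans on a citation that, read literally, does not cover the stated generality.

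One small stylistic remark: early on you write that you ``rewrite'' $\underline{A}\times\underline{S}$ as $\underline{A\times S}$ using that condensation preserves products of T1 spaces; it would be cleaner to note directly that $A\times S$ is compactly generated Hausdorff (as you do a sentence later) and appeal to full faithfulness on that subcategory, since preservation of binary products by the condensation functor is itself most easily seen via compact generation. This does not affect correctness.
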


\begin{rem}
	We include a reminder that for spaces $X$ and $Y$, the compact-open topology on $C(X,Y)$ is defined as having subbasis the following sets. For $K \subseteq X$ compact and $U \subseteq Y$ open,
	\begin{align*}
		V(K,U) = \{ f \in C(X,Y) | f(K) \subseteq U\}
	\end{align*}
	If $Y$ is locally compact and Hausdorff we get a hom-tensor adjunction
	\begin{align*}
		C(X \times Y, Z) \cong C(X, C(Y,Z))
	\end{align*}
	where $C(Y,Z)$ is given the compact-open topology \cite{compact-open}.
\end{rem}

\begin{proof}
	Taking $S$-valued points for extremally disconnected $S$, we need an isomorphism
	\begin{align*}
		\homcondset(\underline{A} \times \underline{S}, \underline{B}) \to C(S,C(A,B))
	\end{align*}
	By the hom-tensor adjunction of the compact-open topology, $C(S, C(A,B)) \cong C(A \times S, B)$. This is then clear because condensation is fully faithful when restricted to compactly generated spaces.
\end{proof}

\subsection{Internal Hom for condensed abelian groups}

We can make an analogous construction with condensed abelian groups. The following is a general property of abelian sheaves on a site.

\begin{prop}
	\label{prop-condhomab}
	Condensed abelian groups $\condab$ form a closed symmetric monoidal category, where the tensor product $A \otimes B$ is given by the sheafification of
	\begin{align*}
		S \mapsto A(S) \otimes B(S)
	\end{align*}
	We also have an enriched hom functor $\condhom$ satisfying the adjunction
	\begin{align*}
		\Hom (P, \condhom(M, N)) = \Hom (P\otimes M, N)
	\end{align*}
	If we apply this to the free object $P = \ZZ[\underline{S}]$ for an extremally disconnected set $S$,
	\begin{align*}
		\condhom (M, N) (S) = \Hom(\ZZ[\underline{S}] \otimes M, N)		
	\end{align*}
	which allows us to characterise $\condhom$.
\end{prop}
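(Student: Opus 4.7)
The plan is to mimic the construction for condensed sets given in \cref{prop-condsethom}, with the free-forgetful adjunction $\ZZ[-] \dashv (\text{forget})$ bridging the two settings. First I would define the tensor product as the sheafification (followed by left Kan extension from $\mathrm{ED}_\kappa^{\mathrm{op}}$) of the presheaf $S \mapsto A(S) \otimes_\ZZ B(S)$, with unit the constant sheaf $\underline{\ZZ}$. Symmetry and associativity are inherited from the corresponding properties of $\otimes_\ZZ$ section-wise, since sheafification is a left adjoint and hence preserves colimits; the cardinal $\kappa$ may be enlarged as in \cite[Remark 2.13]{condensed} to ensure the result stays inside $\condab$.

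Next I would define the internal Hom by $\condhom(M,N)(S) := \homcondab(\ZZ[\underline{S}] \otimes M, N)$ for $S$ extremally disconnected, with functoriality in $S$ induced by precomposition with the maps $\ZZ[\underline{T}] \otimes M \to \ZZ[\underline{S}] \otimes M$ coming from any $T \to S$. That this defines a condensed abelian group follows because $\ZZ[\underline{-}]$ sends finite disjoint unions to finite direct sums, so $\homcondab(\ZZ[\underline{-}] \otimes M, N)$ converts those unions into products, and the Kan extension condition is handled by the same cardinality argument used for $\otimes$.

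To verify the adjunction $\Hom(P, \condhom(M,N)) = \Hom(P \otimes M, N)$, I would first check it by hand for $P = \ZZ[\underline{S}]$ with $S$ extremally disconnected: the left-hand side equals $\condhom(M,N)(S)$ by the Yoneda computation of \cref{prop-projcondab}, which by definition is the right-hand side. I would then extend to arbitrary $P$ by writing $P$ as a colimit of such representables (possible since the $\ZZ[\underline{S}]$ generate $\condab$ under colimits, essentially by \cref{thm-rescondab}) and noting that both $P \mapsto \Hom(P, \condhom(M,N))$ and $P \mapsto \Hom(P \otimes M, N)$ send colimits to limits — the former by general Hom formalism, the latter because $-\otimes M$ is cocontinuous, being a sheafification of a section-wise left adjoint. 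The final explicit formula is then the case $P = \ZZ[\underline{S}]$ of the adjunction combined with Yoneda.

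The main obstacle I expect is the bookkeeping around sheafification and the Kan extension condition for the tensor product, since a naive presheaf-level definition risks breaking either property; one must check that a single pass of sheafification plus left Kan extension — possibly after enlarging $\kappa$ — really produces an object of $\condab$ enjoying the right universal property. Once cocontinuity of $-\otimes M$ is established, the adjunction itself is a purely formal reduction to representable projectives.
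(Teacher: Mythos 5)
The paper does not actually prove this proposition: it states it and immediately declares it to be ``a general property of abelian sheaves on a site,'' with no proof supplied. So there is no in-paper argument to compare against; your job here was effectively to reconstruct the standard argument from scratch, and you have done so correctly. The decomposition you use --- (i) section-wise presheaf tensor followed by sheafification and a cardinal bump to stay in $\condab$, (ii) define $\condhom(M,N)(S)$ as $\Hom(\ZZ[\underline{S}]\otimes M,N)$ and check the sheaf axiom via $\ZZ[\underline{S_1\sqcup S_2}]\cong\ZZ[\underline{S_1}]\oplus\ZZ[\underline{S_2}]$, (iii) verify the adjunction on representables $\ZZ[\underline{S}]$ using the free-forgetful adjunction and Yoneda, (iv) extend to all $P$ by cocontinuity of $-\otimes M$ --- is exactly the proof one would give, and it matches what \cite{condensed} leaves implicit.

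One small citation slip: for the claim that the $\ZZ[\underline{S}]$ generate $\condab$ under colimits, \cref{thm-rescondab} is not the right reference, since that resolution involves $\ZZ[M^{r_{i,j}}]$ for $M$ an arbitrary condensed abelian group, not $\ZZ[\underline{S}]$ for $S$ extremally disconnected. What you want is \cref{prop-projcondab} together with the fact (from \cite{condensed}, Theorem 2.2) that the category has enough projectives of exactly this form; equivalently, every condensed set receives a surjection from a coproduct of representables $\underline{S}$, which $\ZZ[-]$ turns into an epimorphism from a direct sum of $\ZZ[\underline{S}]$'s. This does not affect the substance of your argument, since the generation statement itself is true and the remainder of the reduction to representables is formal. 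You are also right to flag the Kan-extension bookkeeping as the genuinely fiddly point; the paper's own treatment leans entirely on \cite[Remark 2.13]{condensed} for this, and your instinct to enlarge $\kappa$ as needed is the correct resolution.
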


With these enriched functors, we can derive an enriched version of the adjunction between the forgetful functor $\condab \to \condset$ and free condensed abelian groups.

\begin{prop}
	\label{prop-enrichedadjunction}
	Let $A$ be a condensed set and $B$ a condensed abelian group. Then, there is an isomorphism of condensed sets
	\begin{align*}
		\condhom(\ZZ[A], B) \cong \condsethom (A, B)
	\end{align*}
\end{prop}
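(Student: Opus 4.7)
The plan is to establish the claimed isomorphism section-wise on extremally disconnected sets $S$ and then check that the construction is natural in $S$, so as to promote the pointwise bijection to an isomorphism of condensed sets.

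First I would unwind both sides using the characterisations from \cref{prop-condsethom,prop-condhomab}. For an extremally disconnected set $S$, these give
\begin{align*}
	\condhom(\ZZ[A], B)(S) &= \homcondab(\ZZ[\underline{S}] \otimes \ZZ[A], B), \\
	\condsethom(A, B)(S) &= \homcondset(A \times \underline{S}, B),
\end{align*}
where in the second line we silently apply the forgetful functor to view $B$ as a condensed set.

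The bridge between these two expressions is that the free condensed abelian group functor $\ZZ[-]$ should be strong monoidal, i.e.\ $\ZZ[\underline{S}] \otimes \ZZ[A] \cong \ZZ[\underline{S} \times A]$. I would argue this at the presheaf level first: for ordinary sets $X,Y$ one has the classical identification $\ZZ[X \times Y] \cong \ZZ[X] \otimes_{\ZZ} \ZZ[Y]$, so the presheaves $S \mapsto \ZZ[\underline{S}(S') \times A(S')]$ and $S' \mapsto \ZZ[\underline{S}(S')] \otimes \ZZ[A(S')]$ coincide. Since both $\ZZ[-]$ and $\otimes$ on $\condab$ are defined by sheafifying the analogous presheaf constructions, and sheafification is a left adjoint (hence commutes with colimits and with tensor products), this gives the required isomorphism $\ZZ[\underline{S}] \otimes \ZZ[A] \cong \ZZ[\underline{S} \times A]$ of condensed abelian groups.

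Combining this with the free–forgetful adjunction of \cref{prop-projcondab} applied to the condensed set $\underline{S} \times A$, I obtain the chain
\begin{align*}
	\condhom(\ZZ[A], B)(S) = \homcondab(\ZZ[\underline{S}] \otimes \ZZ[A], B) \cong \homcondab(\ZZ[\underline{S} \times A], B) \cong \homcondset(\underline{S} \times A, B) = \condsethom(A,B)(S),
\end{align*}
which supplies the desired bijection for each $S$. Each isomorphism in this chain is natural in $S$ (the free–forgetful adjunction is natural in its argument, and the monoidality isomorphism $\ZZ[\underline{S}] \otimes \ZZ[A] \cong \ZZ[\underline{S} \times A]$ is natural in $\underline{S}$ by functoriality), so the resulting bijection assembles into an isomorphism of condensed sets, as required. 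The step most worth writing carefully is the strong monoidality of $\ZZ[-]$, since it is the only non-formal input; everything else is a formal manipulation of adjunctions.
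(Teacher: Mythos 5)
Your proposal is correct and follows essentially the same path as the paper's proof: reduce to $S$-valued points via the internal Hom characterisations and then apply the free--forgetful adjunction. The only difference is that you make explicit the strong monoidality $\ZZ[\underline{S}] \otimes \ZZ[A] \cong \ZZ[\underline{S} \times A]$, which the paper uses silently when it writes $\homcondab(\ZZ[A \times \underline{S}], B)$ for the left-hand side; spelling this out is a reasonable improvement, and your sheafification argument for it is sound.
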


\begin{proof}
	Taking $S$-valued points, we need an isomorphism
	\begin{align*}
		\homcondab(\ZZ[A \times \underline{S}], B) \to \homcondset (A \times \underline{S}, B)
	\end{align*}
	But this is clear by the adjunction between free condensed abelian groups and the forgetful functor $\condab \to \condset$.
\end{proof}

The closed symmetric monoidal structure on condensed sets allows us to prove the following result.

\begin{theorem}
	Let $A$ and $B$ be Hausdorff topological groups, with $A$ compactly generated. Then,
	\begin{align*}
		\condhom(\underline{A}, \underline{B}) \cong \underline{\Hom_{\ab}(A,B)}
	\end{align*}
	where $\homab(A,B)$ carries the compact-open topology.
\end{theorem}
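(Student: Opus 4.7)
The plan is to evaluate both sides on an arbitrary extremally disconnected $S$ and construct a natural isomorphism of sections. By \cref{prop-condhomab}, the left-hand side is $\homcondab(\ZZ[\underline{S}] \otimes \underline{A}, \underline{B})$, while the right-hand side unwinds to $C(S, \homab(A, B))$ by the definition of condensation, with the target carrying the compact-open topology.

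First I would unravel the tensor product. Combining the universal property of $\otimes$ in $\condab$ (maps out of $\ZZ[\underline{S}] \otimes \underline{A}$ are bilinear) with \cref{prop-enrichedadjunction} (bilinear out of $\ZZ[\underline{S}]$ is the same as arbitrary out of $\underline{S}$), a map $\ZZ[\underline{S}] \otimes \underline{A} \to \underline{B}$ in $\condab$ corresponds to a map of condensed sets $\underline{S} \times \underline{A} \to \underline{B}$ that is additive in the $\underline{A}$-variable section-wise. Since $A$ is compactly generated Hausdorff and $S$ is compact Hausdorff, the product $A \times S$ is again compactly generated Hausdorff, so by the proposition immediately preceding this theorem (the topological computation of $\condsethom$) such condensed-set morphisms correspond to continuous maps $f: A \times S \to B$ satisfying that $f(-, s): A \to B$ is a group homomorphism for each $s \in S$.

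Next, applying the compact-open hom-tensor adjunction as used in the proof of that preceding proposition gives $C(A \times S, B) \cong C(S, C(A, B))$ via $f \mapsto (s \mapsto f(-, s))$. Under this bijection, the ``additive in $A$ for each fixed $s$'' condition corresponds to the continuous map $S \to C(A, B)$ factoring through the subspace $\homab(A, B) \subseteq C(A, B)$; and since the compact-open topology on $\homab(A, B)$ is by definition the subspace topology, such a factorization is equivalent to a continuous map $S \to \homab(A, B)$, which is precisely an element of $\underline{\homab(A, B)}(S)$. Chaining these identifications and verifying naturality in $S$ yields the isomorphism.

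The step most likely to require care is the translation in the second paragraph from ``homomorphism out of $\ZZ[\underline{S}] \otimes \underline{A}$'' to ``condensed-set map $\underline{S} \times \underline{A} \to \underline{B}$ additive in $\underline{A}$'': this rests on keeping the sheafification in the definition of $\otimes$ and the free-forgetful adjunction of \cref{prop-enrichedadjunction} straight at the level of sections. Once this bridge is built, the remainder is essentially a group-theoretic enrichment of the compact-open calculation already carried out for the condensed-set version of this theorem.
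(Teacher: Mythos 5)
Your outline follows essentially the same chain of identifications as the paper (tensor/Hom adjunctions in $\condab$, the free--forgetful adjunction via \cref{prop-enrichedadjunction}, the identification $\condsethom(\underline{A},\underline{B}) = \underline{C(A,B)}$, and the compact-open currying $C(A\times S,B)\cong C(S,C(A,B))$), and you reach the right intermediate object (maps of condensed sets $\underline{S}\times\underline{A}\to\underline{B}$ additive in the $\underline{A}$-variable). So the architecture is sound, and you correctly flag the step ``translation from homomorphisms out of $\ZZ[\underline{S}]\otimes\underline{A}$ to additive condensed-set maps'' as the one requiring care.

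However, flagging that step is not the same as filling it, and that step \emph{is} the proof. The remark immediately after this theorem in the paper spells out that the original Clausen--Scholze argument passes through the surjection $\ZZ[\underline{A}\times\underline{S}]\twoheadrightarrow \underline{A}\otimes\ZZ[\underline{S}]$ and the free--forgetful adjunction to land in $C(A\times S,B)$, and that---because of the sheafifications hidden in both $\otimes$ and $\ZZ[-]$---it is not a priori clear why the resulting continuous map is additive in $A$. The entire reason the paper develops the explicit formula for $\condsethom$ in \cref{prop-condsethom} (including the concrete description of both directions of the adjunction at the level of sections) is to make additivity trackable through that chain. Your appeal to ``the universal property of $\otimes$ (maps out of $\ZZ[\underline{S}]\otimes\underline{A}$ are bilinear)'' is exactly the kind of statement the author cautions against accepting without unwinding: it is true, but you still have to show that when you push a bilinear map through the free--forgetful adjunction and then through the fully faithful embedding into topological spaces, the additivity survives. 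The paper does this by explicitly computing, for a section $T$, that $\hat g_T(a,s)=(\hat f(a))_T(s\times\operatorname{id})$ and $(\hat f_{T_1}(a))_{T_2}(s,t)=\hat g_{T_2}(t^*(a),s)$ (in the notation of \cref{prop-condsethom}) and observing these formulas are visibly additive; your proof would need to supply this computation, or an equivalent explicit bookkeeping, rather than refer to ``keeping the sheafification \dots straight at the level of sections.''
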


\begin{proof}
	Taking $S$-valued points, we want to construct isomorphisms
	\begin{align*}
		\homcondab(\underline{A} \otimes \ZZ[\underline{S}], \underline{B}) \to C(S, \homab(A,B))
	\end{align*}
	Using the hom-tensor adjunction in condensed abelian groups,
	\begin{align*}
		\homcondab(\underline{A} \otimes \ZZ[\underline{S}], \underline{B}) \cong \homcondab(\underline{A}, \condhom(\ZZ[\underline{S}], \underline{B})) \xhookrightarrow{} \homcondset (\underline{A}, \condhom(\ZZ[\underline{S}], \underline{B}))
	\end{align*}
	Now we can apply \cref{prop-enrichedadjunction} and have an inclusion
	\begin{align*}
		\homcondab(\underline{A} \otimes \ZZ[\underline{S}], \underline{B}) \xhookrightarrow{} \homcondset(\underline{A}, \condhom(\ZZ[\underline{S}], \underline{B})) \cong \homcondset(\underline{A}, \condsethom(\underline{S}, \underline{B}))
	\end{align*}
	By the hom-tensor adjunction in condensed sets,
	\begin{align*}
		\homcondset(\underline{A}, \condsethom(\underline{S}, \underline{B})) \cong \homcondset(\underline{A \times S}, \underline{B}) \cong C(A \times S, B)
	\end{align*}
	where the last isomorphism is due to the fully faithful embedding of compactly generated spaces in condensed sets. Then, we have an inclusion
	\begin{align*}
		\homcondab(\underline{A} \otimes \ZZ[\underline{S}], \underline{B}) \xhookrightarrow{} C(A \times S, B)
	\end{align*}
	We claim that the induced maps $A \times S \to B$ are additive in $A$. Let $f: \underline{A} \otimes \ZZ[\underline{S}] \to \underline{B}$. By our construction, this induces a map of condensed abelian groups $\underline{A} \to \condhom(\ZZ[\underline{S}], \underline{B})$, which by the inclusion used is a map of condensed sets $\hat{f}: \underline{A} \to \condsethom(\underline{S}, \underline{B})$ that is additive in $\underline{A}$ at each section.

	By \cref{prop-condsethom}, this induces $\hat{g}: \underline{A} \times \underline{S} \to \underline{B}$. We now check that this is linear in $A$ at each section. The construction in \cref{prop-condsethom} gives that for profinite $T$, and $(a,s) \in \underline{A}(T) \times \underline{S}(T)$,
	\begin{align*}
		\hat{g}_T (a,s) = (\hat{f} (a))_T (s \times id)
	\end{align*}
	Since $\hat{f}$ was linear in $a$, it is easy to see that $\hat{g}$ is too.

	Hence, the induced maps are linear in $A$. By the hom-tensor adjunction of the compact-open topology, we have continuous maps $S \to C(A,B)$ such that the induced maps on $A$ are additive. Hence, the image must be contained in $\homab (A,B)$. This shows an inclusion
	\begin{align*}
		\homcondab(\underline{A} \otimes \ZZ[\underline{S}], \underline{B}) \xhookrightarrow{} C(S, \homab(A,B))
	\end{align*}
	Now we must show surjectivity. Let $g: S \to \homab(A,B)$. By the reverse construction, this induces a map $\hat{g}: \underline{A} \times \underline{S} \to \underline{B}$ that is additive in $\underline{A}$ at every section.

	Applying \cref{prop-condsethom}, this induces a map of condensed sets $\hat{f}: \underline{A} \to \condsethom(\underline{S}, \underline{B})$, such that for profinite $T_1$ and $T_2$, $a \in \underline{A}(T_1)$ and $(s,t) \in \underline{S}(T_2) \times \underline{T_1}(T_2)$,
	\begin{align*}
		(\hat{f}_{T_1} (a))_{T_2} (s,t) = \hat{g}_{T_2} (t^*(a), s)
	\end{align*}
	where $t^* : \underline{A}(T_1) \to \underline{A}(T_2)$ is the induced map by the sheaf. Since $\underline{A}$ is a condensed abelian group, $t^*$ must be additive in $A$, so $\hat{f}$ must be additive by additivity of $\hat{g}$.

	Hence, the map is linear on $\underline{A}$ at every section so this results in a map of condensed abelian groups $\underline{A} \to \condhom(\ZZ[\underline{S}], \underline{B})$. By adjunctions, this lifts to a map $\underline{A} \otimes \ZZ[\underline{S}] \to \underline{B}$ as required.
\end{proof}

\begin{rem}
	The above is proved in \cite[Proposition 4.2]{condensed} but we include an alternative proof. In the original argument, adjuctions are used to prove that a map $\underline{A} \otimes \ZZ[\underline{S}] \to \underline{B}$ induces a map $A \times S \to B$. More precisely, the argument used in these notes is
	\begin{align*}
		\homcondab (\underline{A} \otimes \ZZ[\underline{S}], \underline{B}) \xhookrightarrow{} \homcondab (\ZZ[\underline{A} \times \underline{S}], \underline{B}) \cong C(A \times S, B)
	\end{align*}
	However, it did not seem a priori clear to the author why this map should be additive in $A$ to conclude a map $S \to \homab (A,B)$. Due to the adjunctions used, both in the tensor product and the free condensed abelian group, it is not obvious that this additivity should be preserved. Sheafification makes the explicit maps somewhat uncertain. This led to the construction of the internal Hom in condensed sets for this essay.
\end{rem}

\subsection{The derived category of condensed abelian groups}

With the previous constructions, we can also build a derived category. Within the conventional theory of $1$-categories, it is not known whether $D(\condab) \cong \operatorname{Cond}(D(\ab))$. There is no reason why $\operatorname{Cond}(D(\ab))$ should be triangulated.

However, this can all be solved using $\infty$-categories. Sheaves of abelian groups on the category of extremally disconnected sets are very well-behaved since homology commutes with evaluation on extremally disconnected sets. This results in a hypercomplete $(\infty,1)$-topos which gives an equivalence between the desired categories \cite[Theorem 2.1.2.2]{saglurie}.

Nevertheless, this advanced treatment will not be necessary for the purposes of this essay and we will treat $D(\condab)$ as the usual triangulated category, which allows essentially the same computations. Whenever we talk about distinguished triangles, the $\infty$-categorical equivalent object is a cofiber sequence.

\begin{theorem}[\cite{condensed}]
	The derived category of condensed abelian groups $D(\condab)$ is closed symmetric monoidal, where we have an adjunction between an internal hom and a total tensor product
	\begin{align*}
		\Hom(M \otimes ^L N, P) \cong \Hom(M, \condrhom(N,P))
	\end{align*}
\end{theorem}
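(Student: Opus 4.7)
The plan is to derive the closed symmetric monoidal structure on $\condab$ from \cref{prop-condhomab} to the derived category. Since $\condab$ has enough projectives but admits no non-zero injectives, both $\otimes^L$ and $\condrhom$ must be constructed via projective resolutions in the appropriate variables, and the adjunction must be deduced from the underived one by passing through the homotopy category.

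First I would verify that the projectives $\ZZ[\underline{S}]$ for extremally disconnected $S$ are flat, so that the projective resolutions produced by \cref{thm-rescondab} and \cref{thm-projres} can serve as flat resolutions. At the presheaf level one has $\ZZ[\underline{S}](T) \otimes A(T) = \bigoplus_{C(T,S)} A(T)$, which is exact in $A$; since sheafification is exact and arbitrary direct sums in $\condab$ are exact (AB4), tensoring with any direct sum of such projectives is exact. Consequently I can define $M \otimes^L N := \tot(P^M_\bullet \otimes P^N_\bullet)$ for projective resolutions $P^M_\bullet \to M$ and $P^N_\bullet \to N$, and the usual spectral sequence comparison argument shows this is independent of the chosen resolutions up to canonical quasi-isomorphism and inherits associativity, symmetry and unit from the underived tensor product.

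Dually, I would set $\condrhom(N, P) := \condhom(Q_\bullet, P)$ for $Q_\bullet \to N$ a projective resolution. The underived hom-tensor adjunction of \cref{prop-condhomab} applied termwise to $Q_\bullet$ gives an isomorphism of chain complexes
\begin{align*}
\Hom(M \otimes Q_\bullet, P) \cong \Hom(M, \condhom(Q_\bullet, P)),
\end{align*}
and passing to the homotopy category, together with the quasi-isomorphism $M \otimes^L N \simeq M \otimes Q_\bullet$ supplied by flatness of $Q_\bullet$, yields the desired derived adjunction.

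The main obstacle is the treatment of unbounded complexes. Without non-zero injectives the familiar strategy of resolving $P$ by K-injectives to derive $\condhom$ in its second variable is unavailable, and one is forced to rely on Spaltenstein-style K-projective or K-flat resolutions in the first variable, whose existence in $\condab$ is non-trivial. This is precisely the point where the $\infty$-categorical enhancement noted above becomes genuinely necessary; within the purely triangulated framework of this essay we take the formal adjunction as given, which suffices for the computations that follow.
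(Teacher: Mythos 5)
The paper does not actually prove this theorem: it is stated as a citation to \cite{condensed}, and the subsequent \cref{example-condrhom} only sketches the construction of $\condrhom$ as the total derived functor of $\condhom^\bullet(-,B)$ in the first variable via Cartan--Eilenberg resolutions. Your proposal therefore supplies an argument where the paper offers none, and the overall strategy you take (resolve only in the left variable of $\condhom$ because $\condab$ has no non-zero injectives, build $\otimes^L$ via flat resolutions, and punt the unbounded case to the $\infty$-categorical enhancement) matches the paper's stated viewpoint.

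There is, however, a gap in your adjunction step. You produce an isomorphism of Hom-complexes $\Hom(M \otimes Q_\bullet, P) \cong \Hom(M, \condhom(Q_\bullet, P))$ and then assert that ``passing to the homotopy category, together with the quasi-isomorphism $M \otimes^L N \simeq M \otimes Q_\bullet$, yields the desired derived adjunction.'' Taking $H^0$ of these Hom-complexes computes $\Hom_{K(\condab)}$, not $\Hom_{D(\condab)}$, and the quasi-isomorphism $M \otimes^L N \simeq M \otimes Q_\bullet$ does not by itself transfer derived-category Hom groups unless the source is K-projective. Since $M$ is arbitrary, $M \otimes Q_\bullet$ need not be a complex of projectives even though $Q_\bullet$ is. You already took a projective resolution $P^M_\bullet \to M$ to define $\otimes^L$; you need to carry it into the adjunction argument as well, i.e.\ establish
\begin{align*}
\Hom_K\bigl(\tot(P^M_\bullet \otimes Q_\bullet), P\bigr) \cong \Hom_K\bigl(P^M_\bullet, \condhom^\bullet(Q_\bullet, P)\bigr),
\end{align*}
where both sides are now genuinely derived Homs because $\tot(P^M_\bullet \otimes Q_\bullet)$ and $P^M_\bullet$ are bounded-above complexes of projectives. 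This is a small repair, but without it the claimed derived adjunction does not follow from the termwise one. The remaining concerns you raise about unbounded complexes and the lack of K-injectives are real and are dealt with in the source via the $\infty$-categorical description, exactly as you note.
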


\begin{example}
	\label{example-condrhom}
	We will explain in more detail how to calculate $\condrhom$, as the derived functor of $\condhom^\bullet$ in the derived category. Let $A$ and $B$ be two bounded above cochain complexes of condensed abelian groups. 

	We now take the double complex $\{\condhom (A^p, B^{-q})\}$ with maps $A^p \to A^{p+1}$ and $B^{-q} \to B^{-q -1}$, which by the functoriality of $\condhom$ turn into differentials
	\begin{align*}
		d^h : \condhom(A^p, B^{-q}) \to \condhom(A^{p+1},B^{-q}) \\
		d^v : \condhom(A^p, B^{-q}) \to \condhom(A^p, B^{-q-1})
	\end{align*}
	(one needs to adjust the sign of $d^v$ multiplying by $(-1)^{p+q+1}$ to ensure the antisymmetry of the differentials). Define
	\begin{align*}
		\condhom ^\bullet (A,B)= \tot^\Pi (\{\condhom(A^p, B^{-q})\})
	\end{align*}
	$\condhom^\bullet(-,B)$ is a morphism of triangulated categories (when considering the homotopy category) so we may construct the total derived functor $\condrhom(-,B)$. Due to the lack of injectives in condensed abelian groups it is not possible to do an analogous construction with $\condhom^\bullet (A,-)$. 

	We then obtain a bifunctor $\condrhom : D(\condab)^{op} \times D(\condab) \to D(\condab)$. This is completely analogous to the usual $\rhom$ and details may be found in \cite{weibel}.
	
	$\condab$ has enough projectives, so we can find a Cartan-Eilenberg resolution of projectives of $A^{op}$ ($A$ considered as a chain complex so it is bounded below) and take the chain complex $\tot ^\oplus$, which is quasi-isomorphic to $A^{op}$ (\cite{weibel} \S 5.7). Since $A^{op}$ is bounded below, its Cartan-Eilenberg resolution is a first quadrant bicomplex, so when applying $\tot ^\oplus$ we get a bounded below complex. We now take again the opposite cochain complex to get a complex of projectives quasi-isomorphic to $A$, call it $P$. Then,
	\begin{align*}
		\condrhom(A,B) = \condhom^\bullet (P, B)
	\end{align*}
	However, this work will focus on computing $\condrhom(A,B)$ where $A$ and $B$ are condensed abelian groups and we are just considering them as cochain complexes concentrated on degree zero. Then, we may substitute $A$ by just a projective resolution.

	Observe then that the Cartan-Eilenberg resolution would be concentrated on the vertical axis, and $\condrhom(A,B)$ would be the functor $\condhom (-,B)$ applied to the projective resolution of $A$.
\end{example}

\begin{rem}
	The adjunction gives the usual properties of $\rhom$
	\begin{enumerate}
		\item $\condrhom(\bigoplus A_i, B) = \prod \condrhom (A_i, B)$
		\item $\condrhom(A, \prod B_j) = \prod \condrhom(A, B_j)$
	\end{enumerate}
\end{rem}

\section{Derived mapping spaces}
\label{section-derivedmapping}

In this section we explore the internal hom $\condrhom$ of the derived category.

\subsection{The spectral sequence argument}

Finding a projective resolution of condensed abelian groups is not a trivial calculation. That is why we provide this construction that uses spectral sequences and condensed cohomology.

\begin{theorem}
	\label{thm-spectral}
	Let $A$ and $M$  be condensed abelian groups and $S$ extremally disconnected. Then, there is a spectral sequence
	\begin{align*}
		E_1^{pq} = \prod_{j=1}^{n_p} H^q (A^{r_{p,j}} \times \underline{S}, M) \implies \condext^{p+q} (A[0],M[0])(S)
	\end{align*}
\end{theorem}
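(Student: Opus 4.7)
The plan is to apply the hyperext spectral sequence to the (possibly non-projective) resolution $P_\bullet \to A$ from \cref{thm-rescondab}, where $P_p = \bigoplus_{j=1}^{n_p} \ZZ[A^{r_{p,j}}]$, and then unwind the $E_1$ page via the hom-tensor adjunction of \cref{prop-condhomab}. Since $P_\bullet$ is quasi-isomorphic to $A[0]$, we have $\condrhom(A,M) \cong \condrhom(P_\bullet, M)$ in $D(\condab)$. Following the recipe of \cref{example-condrhom}, choose a Cartan-Eilenberg projective resolution $Q_{\bullet,\bullet} \to P_\bullet$; then $\tot^\oplus(Q)$ is a projective resolution of $A$, and applying $\condhom(-,M)$ term-by-term produces a first-quadrant double cochain complex whose total complex represents $\condrhom(A,M)$.

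Filtering this double complex by columns yields the standard spectral sequence
\begin{align*}
E_1^{p,q} = \condext^q(P_p, M) \implies \condext^{p+q}(A,M)
\end{align*}
of condensed abelian groups (convergence is automatic for a first-quadrant double complex). Evaluation at $S$ equals $\Hom_{\condab}(\ZZ[\underline{S}], -)$ by the remark following \cref{prop-projcondab}, and this functor is exact because $\ZZ[\underline{S}]$ is projective; sectionwise evaluation therefore preserves the spectral sequence and supplies the desired abutment to $\condext^{p+q}(A,M)(S)$.

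It remains to identify the $E_1$ page. Since $\condrhom$ sends direct sums in the first variable to products,
\begin{align*}
\condext^q(P_p, M)(S) = \prod_{j=1}^{n_p} \condext^q(\ZZ[A^{r_{p,j}}], M)(S).
\end{align*}
For any condensed set $T$, the enriched hom-tensor adjunction together with the identification $\ZZ[T] \otimes \ZZ[\underline{S}] = \ZZ[T \times \underline{S}]$ (true sectionwise on presheaves and preserved by sheafification) gives $\condhom(\ZZ[T], M)(S) = \Hom_{\condab}(\ZZ[T \times \underline{S}], M)$. Since $\ZZ[\underline{S}]$ is projective and hence flat, the derived and ordinary tensor products agree against it, so deriving the adjunction yields $\condext^q(\ZZ[T], M)(S) = \ext^q_{\condab}(\ZZ[T \times \underline{S}], M) = H^q(T \times \underline{S}, M)$ by the definition of condensed cohomology. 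Applying this with $T = A^{r_{p,j}}$ recovers the stated $E_1$ page. The main obstacle is the spectral sequence construction itself: because $P_\bullet$ is only a resolution and not a projective one, one must pass through a Cartan-Eilenberg replacement in order to invoke the hyperext machinery; once this is in place, the $E_1$ identification is a routine unwinding of the enriched adjunctions.
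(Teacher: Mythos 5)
Your proof is correct and uses the same essential ingredients as the paper: the resolution from \cref{thm-rescondab}, a Cartan--Eilenberg replacement, the column filtration spectral sequence, the hom-tensor adjunction of \cref{prop-condhomab}, and projectivity (hence flatness and exactness) of $\ZZ[\underline{S}]$. The only organizational difference is the order of operations: the paper evaluates at $S$ \emph{first}, rewriting the whole problem as computing $\ext^q_{\condab}(A \otimes \ZZ[\underline{S}], M)$, then tensors the resolution of $A$ by the flat object $\ZZ[\underline{S}]$ to get a flat resolution of $A \otimes \ZZ[\underline{S}]$ and runs the Cartan--Eilenberg argument on ordinary (non-enriched) $\Hom$; you instead build the spectral sequence internally in $\condab$ using $\condhom$ and $\condext$, and evaluate at $S$ at the very end, relying on exactness of $\Hom_{\condab}(\ZZ[\underline{S}], -)$ to commute the evaluation past the construction. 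Both routes are valid and arrive at the same $E_1$ identification via the same identity $\ZZ[T] \otimes \ZZ[\underline{S}] = \ZZ[T \times \underline{S}]$; the paper's ordering avoids having to argue that sectionwise evaluation preserves the entire spectral sequence, while yours keeps the construction functorial in $S$ until the last step.
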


\begin{proof}
	By \cref{prop-condhomab},
	\begin{align*}
		\condhom (A, M) (S) = \Hom (A \otimes \ZZ[\underline{S}], M)
	\end{align*}
	Then we must have
	\begin{align*}
		\condrhom (A, M) (S) = \rhom (A \otimes \ZZ[\underline{S}], M)
	\end{align*}
	so we will be interested in studying the latter object to understand the sections of $\condrhom (A, M) (S)$. This same reasoning will apply to ext groups because evaluation at extremally disconnected sets commutes with homology, so
	\begin{align*}
		\condext(A,M)(S) = \condext (A \otimes \ZZ[\underline{S}], M)
	\end{align*}
	By \cref{thm-rescondab}, we may pick a resolution of the form
	\begin{align*}
		\cdots \to \bigoplus _{j=1}^{n_i} \ZZ[A^{r_{i,j}}] \to \cdots \to \bigoplus _{j=1}^{n_1} \ZZ[A^{r_{1,j}}] \to A \to 0
	\end{align*}
	This is not necessarily projective, but we obtain a complex of free condensed abelian groups quasi-isomorphic to $A[0]$.

	Since $\ZZ[\underline{S}]$ is projective, it is flat so we may tensor by it to obtain a resolution of the form
	\begin{align*}
		\cdots \to \bigoplus _{j=1}^{n_i} \ZZ[A^{r_{i,j}} \times \underline{S}] \to \cdots \to \bigoplus _{j=1}^{n_1} \ZZ[A^{r_{1,j}} \times \underline{S}] \to A  \otimes \ZZ[\underline{S}] \to 0
	\end{align*}
	Consider a Cartan-Eilenberg resolution $P^{**}$ of the above complex so $\tot^\oplus (P^{**})$ is a projective resolution of $A \otimes \ZZ[\underline{S}]$. By \cref{example-condrhom},
	\begin{align*}
		\rhom(A \otimes \ZZ[\underline{S}], M) = \Hom (\tot^\oplus (P^{**}), M)
	\end{align*}
	where we apply $\Hom(-,M)$ on each degree. However, $\Hom(-,M)$ commutes with finite products so
	\begin{align*}
		\rhom(A \otimes \ZZ[\underline{S}], M) = \tot^\oplus \Hom(P^{**}, M)
	\end{align*}
	To compute $\ext^i (A \otimes \ZZ[\underline{S}], M)$ we may just take the cohomology of the total complex. For this computation, we will use spectral sequences.

	Take the spectral sequence corresponding to the double complex, induced by the trivial filtration on columns. We then have that the first iteration is equal to the vertical cohomology. However, columns in a Cartan-Eilenberg resolution are projective resolutions so
	\begin{align*}
		E_1^{pq} \cong \ext^q (\bigoplus_{j=1}^{n_p} \ZZ[A^{r_{p,j}} \times \underline{S}], M) \cong \prod_{j=1}^{n_p} \ext^q (\ZZ[A^{r_{p,j}} \times \underline{S}], M) \cong \prod_{j=1}^{n_p} H^q (A^{r_{p,j}} \times \underline{S}, M)
	\end{align*}
	The spectral sequence converges to the cohomology of the total complex so
	\begin{align*}
		E_1^{pq} \implies \ext^q (A \otimes \ZZ[\underline{S}], M) \cong \condext^q (A,M)(S)
	\end{align*}
\end{proof}

\subsection{A few important cases}

The following result is analogous to the equivalent calculation in the usual category of abelian groups.

\begin{lemma}
	\label{lemma-rhomZZ}
	For any condensed abelian group $M$,
	\begin{align*}
		\condrhom (\underline{\ZZ}, M ) = M [0]
	\end{align*}
\end{lemma}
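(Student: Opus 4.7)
The plan is to prove $\underline{\ZZ}$ is projective in $\condab$, so that $\condrhom(\underline{\ZZ}, M)$ collapses (via the mechanism of \cref{example-condrhom} applied to the trivial one-term projective resolution of $\underline{\ZZ}$ by itself) to $\condhom(\underline{\ZZ}, M)[0]$, and then to identify this internal hom with $M$ directly.

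For projectivity I would show $\underline{\ZZ} \cong \ZZ[\underline{*}]$, where $*$ is a one-point space; since $*$ is manifestly extremally disconnected, \cref{prop-projcondab} then gives projectivity. By Yoneda it suffices to check that both objects represent the same functor $N \mapsto N(*)$ on $\condab$. For $\ZZ[\underline{*}]$ this is the chain
\begin{align*}
\Hom_{\condab}(\ZZ[\underline{*}], N) = \Hom_{\condset}(\underline{*}, N) = N(*),
\end{align*}
using the free/forgetful adjunction of \cref{prop-projcondab} followed by Yoneda for the terminal condensed set $\underline{*}$. For $\underline{\ZZ}$ this follows from the adjunction between the condensation functor $\ab \to \condab$, $A \mapsto \underline{A}$, and evaluation at the point $N \mapsto N(*)$, applied to the cyclic group $\ZZ$: $\Hom(\underline{\ZZ}, N) = \Hom_{\ab}(\ZZ, N(*)) = N(*)$.

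With the identification in hand, I would compute $\condhom(\underline{\ZZ}, M)(S) = \Hom(\underline{\ZZ} \otimes \ZZ[\underline{S}], M)$ via \cref{prop-condhomab}. A short Yoneda argument chaining the adjunctions in \cref{prop-condhomab,prop-enrichedadjunction,prop-condsethom} gives $\ZZ[X] \otimes \ZZ[Y] \cong \ZZ[X \times Y]$ for all condensed sets $X, Y$, so $\underline{\ZZ} \otimes \ZZ[\underline{S}] = \ZZ[\underline{*}] \otimes \ZZ[\underline{S}] = \ZZ[\underline{S}]$, and the remark after \cref{prop-projcondab} yields $\Hom(\ZZ[\underline{S}], M) = M(S)$ naturally in $S$, whence $\condhom(\underline{\ZZ}, M) \cong M$. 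The main obstacle is cleanly pinning down $\underline{\ZZ} \cong \ZZ[\underline{*}]$: the two constructions (condensation of a discrete group versus sheafified free module on the terminal sheaf) look quite different at the level of defining formulas, so one has to go through representability rather than section-wise computations. Once this identification is established, everything else is formal adjunction juggling.
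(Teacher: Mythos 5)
Your proposal is correct and follows essentially the same route as the paper: identify $\underline{\ZZ}$ with $\ZZ[\underline{*}]$, deduce projectivity from \cref{prop-projcondab}, then compute $\condhom(\underline{\ZZ}, M)(S) = \Hom(\ZZ[\underline{*} \times \underline{S}], M) = \Hom(\ZZ[\underline{S}], M) = M(S)$. The one point of divergence is the identification $\underline{\ZZ} \cong \ZZ[\underline{*}]$, which the paper verifies directly by observing that both sides are the sheaf of locally constant $\ZZ$-valued functions, whereas you derive it from the constant-sheaf/global-sections adjunction $\Hom_{\condab}(\underline{A}, N) \cong \Hom_{\ab}(A, N(*))$ for discrete $A$ --- a correct and standard fact, though one not among the results previously quoted in the paper, so you are importing an extra lemma where the paper makes a short section-wise check.
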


\begin{proof}
	First we will prove that
	\begin{align*}
		\underline{\ZZ} = \ZZ [\underline{*}]
	\end{align*}
	Notice that $\underline{*}$ is the zero sheaf, so $\ZZ[\underline{*}]$ is the locally constant sheaf with value $\ZZ$, which is exactly the sheaf of constant functions with codomain $\ZZ$, so we get the desired equality.

	Hence, $\underline{\ZZ}$ is projective. So to consider $\condrhom$ we only need to look at $\condhom(\underline{\ZZ}, M)$.  For an extremally disconnected set $S$,
	\begin{align*}
		\condhom(\ZZ[\underline{*}], M)(S) = \Hom (\ZZ[\underline{*}] \otimes \ZZ[\underline{S}], M) = \Hom(\ZZ[\underline{*} \times \underline{S}], M) = \Hom(\ZZ[\underline{S}], M) = M(S)
	\end{align*}
	So equality follows.
\end{proof}

\begin{theorem}
	Let $M$ be a discrete abelian group. Then, $\condrhom(\underline{\RR}, \underline{M}) = 0$.
\end{theorem}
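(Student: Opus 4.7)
The plan is to apply the spectral sequence of \cref{thm-spectral} with $A = \underline{\RR}$, yielding for each extremally disconnected $S$
\begin{align*}
E_1^{pq} = \prod_{j=1}^{n_p} H^q(\underline{\RR}^{r_{p,j}} \times \underline{S}, \underline{M}) \Longrightarrow \condext^{p+q}(\underline{\RR}, \underline{M})(S).
\end{align*}
Showing that the entire $E_2$ page vanishes will force $\condext^n(\underline{\RR}, \underline{M})(S) = 0$ for every $n$ and $S$, and hence $\condrhom(\underline{\RR}, \underline{M}) = 0$.

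For the $E_1$ entries, continuous maps from $\RR^r \times S$ into the discrete $M$ factor through the projection onto $S$, so $H^0(\underline{\RR}^r \times \underline{S}, \underline{M}) = M(S)$. For the higher degrees I would write $\underline{\RR}^r = \colim_n \underline{B_n}$ with $B_n = [-n,n]^r$ (valid since compact sources have bounded image in $\RR^r$), so that $\ZZ[\underline{\RR}^r \times \underline{S}] = \colim_n \ZZ[\underline{B_n \times S}]$. The Milnor short exact sequence associated with this sequential colimit, combined with $\rhom(-, \underline{M})$, reduces the problem to showing $\ext^q(\ZZ[\underline{B_n \times S}], \underline{M}) = 0$ for $q > 0$. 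Since $B_n \times S$ is compact Hausdorff, \cref{thm-condcohomology} identifies this $\ext$ with $\Hicech(B_n \times S, M)$, and homotopy invariance of Čech cohomology applied to the deformation retract $B_n \times S \to S$ identifies it further with $\Hicech(S, M)$, which vanishes in positive degrees because $S$ is profinite. The accompanying $\lim^1$ term also vanishes since the inverse system of $H^0$'s is the constant $M(S)$ with identity transition maps.

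It then remains to show that the $q=0$ row $\prod_j M(S) \to \prod_j M(S) \to \cdots$, whose differentials are induced by the functorial resolution of \cref{thm-rescondab}, is acyclic. Here I would exploit the factorisation $\RR \xrightarrow{\pi} 0 \xrightarrow{i} \RR$ of the zero endomorphism: functoriality produces chain maps $\pi_*$ and $i_*$ between the resolutions of $\underline{\RR}$ and of the trivial group with $\pi_* i_* = \operatorname{id}$, and applying $\condhom(-, \underline{M})$ after tensoring with $\ZZ[\underline{S}]$ turns the reverse composition $i_* \pi_*$ into the identity as well, since continuous maps from $\RR^r$ into the discrete $M$ depend only on the $S$-coordinate and are unaffected by precomposition with the zero map. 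Hence $\pi^*$ and $i^*$ are mutually inverse chain isomorphisms; the $q=0$ row is identified with $\condhom(-, \underline{M})$ applied to the free resolution of the zero object, and this complex computes $\condext^*(0, \underline{M})(S) = 0$.

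The main obstacle is the first calculation: bridging the non-compactness of $\RR^r$ requires the Milnor $\lim^1$ argument, and identifying the Ext terms goes through homotopy invariance of Čech cohomology, which is standard but not developed in the excerpt. Once the closed boxes $B_n$ approximate $\RR^r$ by compact contractible factors the rest of the argument is structural, since the $q=0$ row computation only uses functoriality of the resolution together with the collapse of $\condhom(\underline{\RR}^r, \underline{M})$ onto its constants.
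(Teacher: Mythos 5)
Your proof is correct and close in spirit to the paper's, but it assembles the ingredients differently. Both start from the spectral sequence of \cref{thm-spectral}, write $\underline{\RR^r}$ as a sequential colimit of the cubes $\underline{[-n,n]^r}$, and rest on homotopy invariance of cohomology with coefficients in a discrete group. The paper's route is to compare the entire spectral sequence for $\condRR$ with the one for $0$ at once: it shows the pullback along $0 \times \operatorname{id}: S \to \RR^r \times S$ is an isomorphism $H^q(\RR^r \times S, M) \cong H^q(S, M)$ for every $q$ by passing the cube filtration through $\rhom$ as a homotopy limit, then concludes via the comparison theorem for spectral sequences. You instead collapse the $E_1$ page directly: the Milnor $\lim^1$ argument is the explicit incarnation of the paper's $\holim$ step and kills the $q>0$ entries, while for the surviving $q=0$ row you factor the zero endomorphism of $\RR$ as $\RR \xrightarrow{\pi} 0 \xrightarrow{i} \RR$ to exhibit $\pi^*$ and $i^*$ as a two-sided inverse pair of chain maps on the resulting row complexes, identifying it with the row coming from the zero object, which is exact. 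That factorization trick is a genuinely clean alternative to invoking the spectral-sequence comparison theorem at $E_\infty$; the price you pay is having to compute $H^0(\RR^r\times S, M)=M(S)$ and establish the $q>0$ vanishing as separate steps. Both accounts lean on homotopy invariance of \v{C}ech (equivalently, by \cref{thm-condcohomology}, sheaf) cohomology of compact Hausdorff spaces, which, as you rightly flag, neither the paper nor your write-up develops from scratch.
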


\begin{proof}
	By \cref{thm-spectral} we obtain spectral sequences for $\condext^q (\condRR, \underline{M})$ and $\condext^q (0,\underline{M}) = 0$. Hence, to prove the result it would suffice to prove that the map
	\begin{align*}
		H^q (\RR^r \times S, M) \to H^q (S, M)
	\end{align*}
	induced by pullback of $0 \times \operatorname{id}: S \to \mathbb{R}^r \times S$	is an isomorphism so they have isomorphic spectral sequences and hence converge to the same cohomology.

	We now may write $\RR^r$ as the filtered colimit of compact spaces. 
	\begin{align*}
		\RR^r = \lim_{\longrightarrow} [-n,n]^r
	\end{align*}
	We claim that this filtered colimit commutes with the condensation functor.

	Looking at it section wise (since condensed sets are stable under pointwise filtered colimits) it is enough to prove that $C(T, \RR^r) = \colim C(T, [-n,n]^r)$ for extremally disconnected $T$. The space $T$ is compact, so any function $T \to \RR^r$ is contained in a compact set of the form $[-n,n]^r$ and equality follows.

	The functor of free condensed abelian groups is a left adjoint, so it preserves colimits. Hence,
	\begin{align*}
		\ZZ[\underline{\RR^r}] = \lim_{\longrightarrow} \ZZ[\underline{[-n,n]^r}]
	\end{align*}
	We now write this as a homotopy colimit. $H^q(\RR^r \times S, M)$ is the cohomology of $\rhom (\ZZ[\underline{\RR^r}\times \underline{S}], \underline{S})$ and $\rhom$ takes homotopy colimits to homotopy limits so
	\begin{align*}
		\rhom(\ZZ[\underline{\RR^r} \times \underline{S}], M) = \holim \rhom(\ZZ[\underline{[-n,n]^r} \times \underline{S}], M) 	
	\end{align*}
	The cohomology of the complex on the right is $H^q([-n,n]^r\times , M)$ and by pullback of $0 \times \operatorname{id} : S \to [-n,n]^r \times S$ we get a map on the cohomologies
	\begin{align*}
		H^q([-n,n]^r \times S, M) \to H^q(S, M)
	\end{align*}
	This is an isomorphism for every $q$, since by \cref{thm-condcohomology} this corresponds to sheaf cohomology which is homotopy invariant. Hence, the map is a quasi-isomorphism between complexes. This is true for every $n$, so the homotopy limit must also be a weak equivalence which gives us an isomorphism of cohomologies. Then, the map
	\begin{align*}
		H^q (\RR^r \times S, M) \to H^q (S, M)
	\end{align*}
	is an isomorphism.
\end{proof}

\begin{theorem}[\cite{condensed}, Theorem 4.3]
	Let $M$ be a discrete abelian group and let $A = \prod_I \RR/\ZZ$. Then,
	\begin{align*}
		\condrhom(\underline{A}, \underline{M}) = \bigoplus_I \underline{M}[-1]
	\end{align*}
\end{theorem}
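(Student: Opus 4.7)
The plan is to mimic the argument of the previous theorem by exhibiting $\underline{A}$ as the cokernel in a short exact sequence whose other terms we can handle directly. Specifically, I would begin with the exact sequence of Hausdorff topological abelian groups
\begin{align*}
0 \to \ZZ^I \to \RR^I \to \prod_I \RR/\ZZ \to 0
\end{align*}
where all groups carry the product topology. To promote this to an exact sequence of condensed abelian groups I would invoke \cref{prop-condses}: the inclusion $\ZZ^I \hookrightarrow \RR^I$ is a product of closed embeddings and hence itself closed, while the compact cube $[0,1]^I \subseteq \RR^I$ (compact by Tychonoff) already surjects onto all of $\prod_I \RR/\ZZ$, so the preimage condition for compact subsets of the quotient is automatic. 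Note also that $\underline{\prod_I \RR/\ZZ}=\prod_I \underline{\RR/\ZZ}=\underline{A}$, since condensation commutes with products of compact Hausdorff spaces.

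Applying $\condrhom(-,\underline{M})$ produces a distinguished triangle
\begin{align*}
\condrhom(\underline{A},\underline{M}) \to \condrhom(\underline{\RR^I},\underline{M}) \to \condrhom(\underline{\ZZ^I},\underline{M}) \to [+1],
\end{align*}
so the problem reduces to the two identifications
\begin{align*}
\condrhom(\underline{\RR^I},\underline{M}) = 0 \quad \text{and} \quad \condrhom(\underline{\ZZ^I},\underline{M}) = \bigoplus_I \underline{M},
\end{align*}
from which the desired answer $\condrhom(\underline{A},\underline{M}) = \bigoplus_I \underline{M}[-1]$ drops out from the shift in the triangle.

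For the vanishing of $\condrhom(\underline{\RR^I},\underline{M})$, I would extend the spectral sequence argument of the previous theorem. \cref{thm-spectral} reduces the claim to showing that $H^q((\RR^I)^{r}\times S, M)=H^q(\RR^{I\times[r]}\times S, M)$ agrees with $H^q(S,M)$, which I would attempt by exploiting the contractibility of $\RR^J$ via the scaling map $(t,x)\mapsto tx$, combined with the continuity of Čech cohomology along the cofiltered limit $\RR^J = \varprojlim_F \RR^F$ over finite subsets $F \subseteq J$. For $\condrhom(\underline{\ZZ^I},\underline{M})=\bigoplus_I \underline{M}$, I would first handle the zeroth Hom: since $\ZZ^I$ is compactly generated Hausdorff, its sections are continuous homomorphisms $\ZZ^I\to C(S,M)$, and because $C(S,M)$ is discrete (as $S$ is compact and $M$ discrete), any such map must kill an open subgroup, hence factor through a finite subproduct, yielding $\bigoplus_I C(S,M)=C(S,\bigoplus_I M)$, which is $(\bigoplus_I\underline{M})(S)$ by \cref{prop-condcolim}. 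The vanishing in higher degrees would again be attacked via \cref{thm-spectral}.

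The hard part will be the infinite products. The previous theorem succeeded because $\RR^r$ is a filtered colimit of compact cubes and the free condensed abelian group functor, being a left adjoint, converted this into a colimit along which $\rhom$ transforms into a homotopy limit with a computable outcome. For $\RR^I$ and $\ZZ^I$ with infinite $I$ the natural presentation is instead as a cofiltered \emph{limit} of finite products, and ensuring that these limits interact correctly with $\condrhom$, with the spectral sequence of \cref{thm-spectral}, and with the relevant Čech cohomology is the most delicate point and will require substantially more care than the finite-dimensional case.
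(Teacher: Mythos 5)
Your decomposition is genuinely different from the paper's, and it runs into a real obstruction. The paper never touches $\RR^I$ or $\ZZ^I$: it uses the single-factor sequence $0 \to \condZZ \to \condRR \to \condRRZZ \to 0$ to get $\condrhom(\condRRZZ,\underline M)=\underline M[-1]$, tensors up to finite products by additivity, and then reduces the infinite product to the finite case by proving $\varinjlim_J \condrhom(\prod_J\condRRZZ,\underline M)\cong\condrhom(\prod_I\condRRZZ,\underline M)$ over finite $J\subseteq I$, via \cref{thm-spectral} and the continuity of \v{C}ech cohomology along cofiltered limits of \emph{compact Hausdorff} spaces. Every space appearing in that final step ($S\times\prod_J\RR/\ZZ$ and $S\times\prod_I\RR/\ZZ$) is compact, which is exactly what the cited continuity result requires.

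Your sequence $0\to\ZZ^I\to\RR^I\to\prod_I\RR/\ZZ\to 0$ is indeed exact as condensed abelian groups (the hypotheses of \cref{prop-condses} check out as you say), but it trades one hard computation for two harder ones. For infinite $I$, the group $\RR^I$ is \emph{not} a filtered colimit of compact cubes $[-n,n]^I$ (a point of $\RR^I$ can have unbounded coordinates), so the filtered-colimit/homotopy-limit trick the paper uses for $\RR^r$, and which you would need to show $\condrhom(\underline{\RR^I},\underline M)=0$, simply does not apply. The cofiltered presentation $\RR^I=\varprojlim_F\RR^F$ is not a substitute: the \v{C}ech continuity theorem you would need holds for cofiltered limits of compact Hausdorff spaces, and none of the $\RR^F$ are compact, so the citation used by the paper covers neither $\RR^I$ nor $\ZZ^I$. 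You flag this yourself at the end as "the most delicate point," but no argument is supplied to close it, and the available techniques in the paper do not extend to it. (Separately, your degree-zero computation of $\condhom(\underline{\ZZ^I},\underline M)$ tacitly assumes $\ZZ^I$ is compactly generated so that the fully faithful embedding applies; this is clear for countable $I$ but needs justification in general.) So while the reduction to a distinguished triangle is valid, the two vertices you need are each at least as hard as the original statement and the proposal leaves them unresolved — it is a genuine gap rather than an alternative proof.
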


\begin{proof}
	We have a short exact sequence
	\[
		\begin{tikzcd}
			0 \arrow{r} & \underline{\ZZ} \arrow{r} & \underline{\RR} \arrow{r} & \underline{\RR}/\underline{\ZZ} \arrow{r} & 0	
		\end{tikzcd}
	\]
	because the corresponding short exact sequence of abelian groups satisfies the hypothesis of \cref{prop-condses}.

	Since the functor $\condrhom(-,\underline{M})$ is a morphism of triangulated categories, $\condrhom (\condZZ, \underline{M}) = \underline{M}$ and $\condrhom(\condRR, \underline{M}) = 0$, we just complete the distinguished triangle and obtain
	\begin{align*}
		\condrhom (\condRRZZ, \underline{M}) = \underline{M}[-1]
	\end{align*}
	It then would suffice to show that
	\begin{align*}
		\lim_{\longrightarrow} \condrhom (\prod_J \condRRZZ, \underline{M}) = \condrhom(\prod_I \condRRZZ, \underline{M})
	\end{align*}
	where $J$ runs over all finite subsets of $I$. Applying \cref{thm-spectral} to make an argument with spectral sequences, it suffices to prove that the natural map
	\begin{align*}
		\lim_{\longrightarrow} H^i ( S \times \prod_J \RR/\ZZ , M) \to H^i (S \times \prod_I \RR/\ZZ, M)
	\end{align*}
	is an isomorphism. But this follows from comparison with sheaf cohomology \cite[Lemma 4.1.3]{topos}.
\end{proof}

\begin{theorem}[\cite{condensed}, Theorem 4.3]
	Let $A$ be a compact abelian group. Then,
	\begin{align*}
		\condrhom (\underline{A}, \condRR) = 0
	\end{align*}
\end{theorem}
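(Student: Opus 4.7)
The plan is to combine Pontryagin duality with the spectral sequence of Theorem~\ref{thm-spectral} and Haar averaging. First I would reduce the general compact case to a product of circles, then settle the product case by a spectral-sequence collapse followed by an averaging argument.

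For the reduction, take a length-one free resolution $0 \to \bigoplus_J \ZZ \to \bigoplus_I \ZZ \to \hat{A} \to 0$ of the discrete Pontryagin dual $\hat{A} = \Hom(A, \RR/\ZZ)$; the kernel is free because subgroups of free abelian groups over a PID are free. Dualising yields a short exact sequence of compact Hausdorff abelian groups
\begin{align*}
	0 \to A \to \prod_I \RR/\ZZ \to \prod_J \RR/\ZZ \to 0,
\end{align*}
which remains exact after condensation by Proposition~\ref{prop-condses} (the inclusion is a closed embedding of compact Hausdorff spaces, and compact subsets of the quotient admit compact preimages in the compact total space). Applying the triangulated functor $\condrhom(-, \condRR)$ produces a distinguished triangle, so it suffices to establish the vanishing when $A = \prod_I \RR/\ZZ$.

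For the product case, apply Theorem~\ref{thm-spectral} with the compact Hausdorff group $G = \prod_I \RR/\ZZ$ and coefficients $M = \condRR$. Each $G^{r_{p,j}} \times S$ is compact Hausdorff, so Theorem~\ref{thm-condcohomology}(2) forces $E_1^{p,q} = 0$ for $q > 0$, collapsing the spectral sequence onto its $q = 0$ row. This identifies $\condext^n(\underline{G}, \condRR)(S)$ with the $n$th cohomology of the complex $C^\bullet(S)$ whose $p$th term is $\prod_j C(G^{r_{p,j}} \times S, \RR)$, with differentials coming from the functorial resolution of Theorem~\ref{thm-rescondab}. Acyclicity of this complex would complete the proof: in degree zero one uses that compact subgroups of the Hausdorff locally convex space $C(S, \RR)$ are trivial (via Hahn--Banach), and in positive degrees one expects to construct a contracting homotopy by averaging cocycles against the Haar measure of $G$, exploiting that the functorial resolution is equivariant under translation.

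The main obstacle is establishing this acyclicity rigorously. The functorial resolution of Theorem~\ref{thm-rescondab} is only given in closed form in low degrees, so turning Haar averaging into a chain-level contracting homotopy is delicate. An alternative that avoids the explicit complex is to express $\prod_I \RR/\ZZ$ as a filtered colimit of its finite sub-products (as in the preceding theorem's proof) and reduce to $\condrhom(\condRRZZ, \condRR) = 0$, which in turn would be derived from the triangle obtained by applying $\condrhom(-, \condRR)$ to $0 \to \condZZ \to \condRR \to \condRRZZ \to 0$ together with Lemma~\ref{lemma-rhomZZ} and a separate verification (via the same spectral-sequence and filtered-colimit technique applied to $\condRR$) that $\condrhom(\condRR, \condRR)$ is concentrated in degree zero.
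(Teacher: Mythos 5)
Your spectral-sequence setup and the $q=0$ collapse are exactly right, and the degree-zero vanishing is correct (though you do not need Hahn--Banach; a compact subgroup of $\RR$ is already trivial, so $\Hom_{\ab}(A,\RR)=0$ for compact $A$). The genuine gap is the acyclicity of the surviving row in positive degrees, which is the heart of the proof, and neither of your two proposed mechanisms closes it.

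The Haar-averaging idea does not obviously produce a contracting homotopy here. The functorial resolution $F(A)_\bullet$ of \cref{thm-rescondab} has terms $\ZZ[A^{r_{i,j}}]$, and its functoriality is in the abelian group $A$ (so a group \emph{endomorphism} of $A$ induces a chain map), not under the translation action of $A$ on itself; translation by an element is not a group homomorphism, so there is no a priori $A$-equivariant structure on the differentials against which to integrate a cochain. The paper instead exploits precisely the group-homomorphism functoriality: by \cite[Proposition 4.17]{condensed} the chain map $[2]$ induced by multiplication by $2$ on $A$ is chain homotopic to scalar multiplication by $2$ on $F(A)_\bullet$, written $2 - [2] = dh + hd$. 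Given a cocycle $f$, one iterates $f = \tfrac{1}{2}[2]^*(f) + d\bigl(\tfrac{1}{2}h^*(f)\bigr)$ and uses two boundedness facts — $\|[2^n]^*(f)\| \le \|f\|$ (since $[2^n]$ is precomposition by a self-map of $A^{r}$, which does not enlarge the image of $f$) and boundedness of $h^*$ as a map between the Banach spaces $C(A^{r}\times S,\RR)$ — to pass to the limit and exhibit $f$ as a coboundary. This division-by-powers-of-$2$ convergence argument is the substitute for the averaging you were hoping for, and it does not require an explicit description of the differentials.

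Your reduction to $A = \prod_I \RR/\ZZ$ via Pontryagin duality is sound (it reappears essentially verbatim as \cref{prop-rhomMcompact}), but it is not needed for this theorem: the contracting-homotopy argument applies to an arbitrary compact $A$ directly. The alternative route through $\condrhom(\condRR,\condRR)$ is also not a shortcut — to show that this is concentrated in degree $0$ you would again face an acyclicity statement for a complex of $C(-,\RR)$ spaces after the spectral sequence collapses, and in \cite{condensed} the computation of $\condrhom(\condRR,\condRR)$ is in fact deduced downstream of the compact case via the triangle $\condZZ \to \condRR \to \condRRZZ$, so invoking it here would be circular.
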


\begin{proof}
	Apply \cref{thm-spectral} to obtain a spectral sequence $E^{pq}$ with
	\begin{align*}
		E^{p,q}_1 = \prod_{j=1}^{n_p} H^q (A^{r_{p,j}} \times S, \RR) \implies \condext ^{p+q}(\underline{A}, \condRR)(S)
	\end{align*}
	$A^{r_{p,j}} \times S$ is compact and by \cref{thm-condcohomology}, $H^q (K, \RR)$ vanishes for compact $K$ and $q > 0$, and $H^0(K, \RR) = C(K, \RR)$. Hence, all of the spectral sequence vanishes except for the horizontal axis. Then, it suffices to show that the spectral sequence vanishes entirely in the next iteration. We will prove that the vertical axis
	\begin{align*}
			0 \to  \bigoplus_{j=1}^{n_0} C(A^{r_{0,j}} \times S, \RR) \to  \bigoplus_{j=1}^{n_1} C(A^{r_{1,j}} \times S, \RR) \to \dots
	\end{align*}
	is exact.

	Call $F(A)_\bullet$ the  resolution of $A$ used to obtain the spectral sequence. Let $2: F(A) \to F(A)$ be the chain map where every component is multiplied by $2$, and let $[2]: F(A) \to F(A)$ be the map induced by multiplying $A$ by $2$ (this is possible since the resolution is functorial in $A$).

	\cite{condensed} shows that these two maps are homotopic (Proposition 4.17). Let $h_{\bullet}: F(A) \to F(A)$ be such a homotopy. Then, $2 - [2] = dh + hd$.

	Suppose $f \in \bigoplus_{j=1}^{n_i} C(A^{r{i,j}} \times S, \RR)$ satisfies $df = 0$. Then,
	\begin{align*}
		2f - [2]^*(f) = d(h_{i-1}^* (f)) + h_i^*(d(f)) = d(h_{i-1}^* (f)) \implies f = \frac{1}{2}[2]^* (f) + d(\frac{1}{2} h_{i-1}^* (f))
	\end{align*}
	Inductively we obtain that
	\begin{align*}
		f = \frac{1}{2^n} [2^n]^* (f) + d \left( \sum_{k=1}^n \frac{1}{2^k} h_{i-1}^*([2^{n-1}]^* (f)) \right)
	\end{align*}
	$[2^n]^* (f)$ is the map induced on $f$ when $A$ is multiplied by $2$. This does not change the image so it is bounded with $\| [2^n]^* (f)\| \leq \| f \|$. Also, $h_{i-1}^*$ is a map of Banach spaces so it must be bounded. Hence, the sum converges and we can write
	\begin{align*}
		f = d \left( \sum_{k=1}^\infty \frac{1}{2^k} h_{i-1}^*([2^{n-1}]^*(f)) \right)
	\end{align*}
	so the complex is exact.
\end{proof}

\subsection{Locally compact abelian groups}

We wil focus on a special class of topological groups, since they have a set structure that simplifies calculations.

\begin{defi}
	A topological space is \emph{locally compact} if every point is contained in a neighbourhood that is contained in some compact set.

	A \emph{locally compact abelian group} is a Hausdorff abelian group whose underlying topology is locally compact.
\end{defi}

The following results may be found in \cite[Proposition 2.2]{lcagroups}.

\begin{theorem}[Structure of LCA groups]
	\label{thm-lcastructure}
	Let $A$ be a locally compact abelian group.
	\begin{enumerate}
		\item $A \cong \RR^n \times A'$, where $A'$ is an extension of a discrete abelian group by a compact abelian group.
		\item The Pontryagin duality functor $A \mapsto \DD(A) = \Hom (A, \RR/\ZZ)$. $\DD(A)$ takes values in locally compact abelian groups and $\DD(\DD(A)) \cong A$.
		\item If $A$ is compact (resp. discrete) if and only if $\DD(A)$ is discrete (resp. compact).
	\end{enumerate}
\end{theorem}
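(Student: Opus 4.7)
The plan is to establish the three parts in sequence, each leaning on the previous one, following the classical Pontryagin--van Kampen development.

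For part (1), the starting observation is that local compactness gives a symmetric compact neighborhood $V$ of $0 \in A$, and the open subgroup $H = \bigcup_n nV$ is compactly generated. The bulk of the work is then the structure theorem for compactly generated LCA groups, asserting that any such group has the form $\RR^n \oplus \ZZ^m \oplus K$ with $K$ compact. One typically obtains this by first using the Peter--Weyl theorem on a maximal compact subgroup to reduce to the compact-free case, and then extracting a maximal one-parameter-subgroup part to split off $\RR^n$. Given this, the splitting $A \cong \RR^n \times A'$ from the statement follows by taking $A'$ to be the complement of the $\RR^n$ factor (defined via the local splitting near $0$); the compact open subgroup $K'$ of $A'$ witnesses that $A'/K'$ is discrete.

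For part (2), I would endow $\DD(A) = \Hom_{\text{cont}}(A, \RR/\ZZ)$ with the compact-open topology and verify the abelian group operations are continuous. Local compactness comes from the following key lemma: pick a small neighborhood $U$ of $0 \in \RR/\ZZ$ containing no nontrivial subgroup, and a compact neighborhood $K$ of $0 \in A$; then the subbasic set $V(K,U)$ is equicontinuous and pointwise bounded, so by Ascoli's theorem its closure is compact. To prove the double duality $A \xrightarrow{\sim} \DD(\DD(A))$ via the evaluation map, I would use part (1) to reduce to three base cases: $A = \RR$ (classical via Fourier analysis, giving $\DD(\RR) \cong \RR$), $A$ discrete, and $A$ compact. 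The discrete and compact cases are dual to each other in view of part (3), and the general case follows by combining the factorization from (1) with the fact that $\DD$ transforms products into products.

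For part (3), the implication $A$ compact $\Rightarrow \DD(A)$ discrete is immediate from the no-subgroup-neighborhood trick: if $U \subseteq \RR/\ZZ$ contains no nontrivial subgroup, then $V(A,U)$ is an open neighborhood of $0$ in $\DD(A)$ consisting only of the zero homomorphism, since for $f \in V(A,U)$ the image $f(A)$ is a subgroup of $\RR/\ZZ$ contained in $U$. For $A$ discrete $\Rightarrow \DD(A)$ compact, the compact-open topology on $\DD(A)$ coincides with the subspace topology of $(\RR/\ZZ)^A$ with the product topology, and $\DD(A)$ is closed inside this compact group (cut out by the homomorphism equations). The remaining two implications then follow formally from the double-duality isomorphism in (2).

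The principal obstacle is part (2): the Pontryagin duality isomorphism is genuinely nontrivial and depends essentially on the structural decomposition in (1), so the three parts cannot be cleanly separated. In a self-contained write-up the bulk of the technical effort would go into verifying the Ascoli argument for local compactness of $\DD(A)$ and the reduction to the base cases $\RR$, discrete, and compact. Since the paper's applications only use the statement as a black box, I would defer the full verification to the cited reference \cite{lcagroups}.
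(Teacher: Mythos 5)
The paper does not prove this theorem at all: it is stated as a black box with a single citation to \cite[Proposition 2.2]{lcagroups}, so there is no internal argument to compare against. Your sketch is a recognizable outline of the classical Pontryagin--van Kampen development, and your closing sentence correctly identifies that the right decision for this essay is to defer the proof to the reference, matching what the paper actually does.

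That said, a couple of points in your outline are looser than you may intend. In part (1), the step from the compactly generated structure theorem $H \cong \RR^n \oplus \ZZ^m \oplus K$ to the global splitting $A \cong \RR^n \times A'$ is nontrivial: $\RR^n$ sits inside the open subgroup $H$ but is not itself open, and one needs the theorem that vector subgroups of LCA groups are direct topological summands (Hewitt--Ross 24.30 or equivalent), which uses both divisibility of $\RR^n$ and a continuity argument to upgrade an algebraic splitting to a topological one. "The complement of the $\RR^n$ factor defined via the local splitting near $0$" hides essentially all of that work. In part (2), your reduction of the general case to $\RR$, discrete, and compact via "$\DD$ transforms products into products" has a real gap: the group $A'$ in part (1) is an \emph{extension} of a discrete group by a compact group, not their product, so a product argument does not reach it. One genuinely needs either exactness of $\DD$ on short exact sequences of LCA groups, or the finer structure theory of compactly generated groups, to bridge this. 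Since you are deferring to the reference anyway this costs you nothing, but if you were to write the proof out in full those two spots are where the real effort would concentrate.
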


\subsection{RHom in locally compact abelian groups}

The rest of this section will consist of simplifying the necessary calculations for $\condrhom$ of locally compact abelian groups.

\begin{prop}
	\label{prop-rhomlcastruct}
	Let $M$ and $N$ be LCA groups. Then, $\condrhom (\underline{M}, \underline{N})$ reduces to assuming $M,N = \RR$ or an extension of a discrete abelian group by a compact abelian group.
\end{prop}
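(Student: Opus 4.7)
The plan is to combine the structure theorem for locally compact abelian groups with the two distributivity properties of $\condrhom$ listed after \cref{example-condrhom}, to split both arguments of $\condrhom(\underline{M}, \underline{N})$ into a part coming from $\RR^k$ and a part of the form "extension of discrete by compact." All four resulting summands will then be of one of the two allowed types.

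Concretely, I would first invoke \cref{thm-lcastructure} to write $M \cong \RR^m \times M'$ and $N \cong \RR^n \times N'$ with $M', N'$ extensions of a discrete abelian group by a compact abelian group. Since condensation commutes with finite products (checked section-wise on an extremally disconnected $S$ via $C(S, X \times Y) = C(S,X) \times C(S,Y)$), we get $\underline{M} \cong \underline{\RR}^m \times \underline{M'}$ and $\underline{N} \cong \underline{\RR}^n \times \underline{N'}$ as condensed abelian groups. In the abelian category $\condab$, finite products agree with finite direct sums, so equivalently $\underline{M} \cong \underline{\RR}^{\oplus m} \oplus \underline{M'}$ and $\underline{N} \cong \underline{\RR}^{\oplus n} \oplus \underline{N'}$. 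In particular $\underline{\RR^m} \cong \underline{\RR}^{\oplus m}$.

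Next I would apply the two distributivity formulas for $\condrhom$ noted in the remark after \cref{example-condrhom}, namely $\condrhom(\bigoplus A_i, B) = \prod \condrhom(A_i, B)$ in the first slot and $\condrhom(A, \prod B_j) = \prod \condrhom(A, B_j)$ in the second. Expanding in both arguments gives
\begin{align*}
\condrhom(\underline{M}, \underline{N}) \cong \condrhom(\underline{\RR}, \underline{\RR})^{mn} \times \condrhom(\underline{\RR}, \underline{N'})^m \times \condrhom(\underline{M'}, \underline{\RR})^n \times \condrhom(\underline{M'}, \underline{N'}).
\end{align*}
Each of the four types of factors on the right is a $\condrhom$ between groups of the two kinds allowed by the statement, so the general computation of $\condrhom(\underline{M}, \underline{N})$ reduces to these cases.

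The only real subtlety is ensuring we are entitled to the decompositions used. The structure-theorem splitting from \cref{thm-lcastructure} is an isomorphism of topological groups, so condensation transports it to an honest product decomposition in $\condab$; and the two distributivity formulas are valid in the derived category because they follow directly from the $(\otimes^L, \condrhom)$-adjunction applied to direct sums and products (which commute with homotopy colimits/limits in the respective slots). Nothing in the argument is deep: the work is almost entirely bookkeeping, and the principal point to verify is the clean transfer of the topological product decomposition through the condensation functor.
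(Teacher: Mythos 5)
Your argument is correct and follows the same route as the paper: invoke the structure theorem from \cref{thm-lcastructure} to split $M$ and $N$ into a Euclidean factor and an extension-of-discrete-by-compact factor, then use that $\condrhom$ commutes with finite products (equivalently, finite direct sums) in each argument to reduce to the four cross-terms. The paper's version is simply terser; your added care about transporting the topological product decomposition through condensation is a reasonable verification but not a different idea.
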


\begin{proof}
	By \cref{thm-lcastructure}, $M \cong \RR^m \times M'$ where $M$ is an extension of a discrete abelian group by a compact abelian group. Similar for $N$. We are done now observing that $\condrhom(-,-)$ commutes with finite products (= direct sums) in either argument.
\end{proof}

\begin{prop}
	\label{prop-rhomcompdisc}
	$\condrhom(\underline{M}, \underline{N})$ where $M$ or $N$ is an extension of a discrete abelian group by a compact abelian group reduces to $M$ or $N$ is compact or discrete.
\end{prop}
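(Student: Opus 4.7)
The plan is to apply Proposition \ref{prop-condses} to the short exact sequence provided by the assumption, and then exploit the fact that $\condrhom(-,-)$ is a morphism of triangulated categories in each variable to produce a distinguished triangle whose outer terms fall into the compact/discrete cases.

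In more detail, suppose first that $M$ is an extension of a discrete abelian group $D$ by a compact abelian group $K$, so that we have a short exact sequence of Hausdorff topological abelian groups
\begin{align*}
    0 \to K \to M \to D \to 0.
\end{align*}
I would first verify the hypotheses of \cref{prop-condses}. The inclusion $K \hookrightarrow M$ is closed because $K$ is compact and $M$ is Hausdorff, so compact subsets of $M$ are closed and the image of any closed subset of $K$ is compact, hence closed. For the lifting condition, any compact subset of the discrete group $D$ is finite, and a finite subset of $D$ can obviously be lifted to a finite (hence compact) subset of $M$ by picking one preimage of each point. Thus \cref{prop-condses} yields a short exact sequence of condensed abelian groups
\begin{align*}
    0 \to \underline{K} \to \underline{M} \to \underline{D} \to 0,
\end{align*}
which is a distinguished triangle in $D(\condab)$.

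Next, since $\condrhom(-, \underline{N})$ is a morphism of triangulated categories, I apply it to this triangle and obtain a distinguished triangle
\begin{align*}
    \condrhom(\underline{D}, \underline{N}) \to \condrhom(\underline{M}, \underline{N}) \to \condrhom(\underline{K}, \underline{N}) \xrightarrow{+1}.
\end{align*}
Thus $\condrhom(\underline{M}, \underline{N})$ is determined up to the connecting morphism by the two outer terms, in which the first argument is compact or discrete. The case where $N$ is an extension of a discrete group by a compact group is handled symmetrically: one condenses the corresponding sequence $0 \to K' \to N \to D' \to 0$ and applies the triangulated functor $\condrhom(\underline{M}, -)$ to obtain a distinguished triangle whose outer terms are $\condrhom(\underline{M}, \underline{K'})$ and $\condrhom(\underline{M}, \underline{D'})$.

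The only mildly delicate point is the verification that \cref{prop-condses} applies, and as observed above this is routine given that $K$ is compact Hausdorff and $D$ is discrete. Once that is in hand, the reduction is essentially formal: the triangulated-functor property of $\condrhom$ turns short exact sequences of LCA groups into distinguished triangles, so knowledge of the compact and discrete cases in each slot, together with the connecting maps, determines the extension case.
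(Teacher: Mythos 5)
Your proof is correct and follows essentially the same route as the paper's: condense the short exact sequence $0 \to K \to M \to D \to 0$ via \cref{prop-condses} (verifying closedness of the inclusion and the compact-lifting condition exactly as the paper does), then use that $\condrhom$ is a triangulated functor in each slot to complete the distinguished triangle. The only addition you make is spelling out the resulting triangle explicitly, which the paper leaves implicit.
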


\begin{proof}
	Suppose $M$ is an extension of a discrete abelian group by a compact abelian group. Then, it fits in a short exact sequence
	\[
		\begin{tikzcd}
			0 \arrow{r} & A \arrow{r}{f} & M \arrow{r}{g} & B \arrow{r} & 0
		\end{tikzcd}
	\]
	where $A$ is compact and $B$ is discrete.

	We are going to prove that the sequence
	\[
		\begin{tikzcd}
			0 \arrow{r} & \underline{A} \arrow{r} & \underline{M} \arrow{r} & \underline{B} \arrow{r} & 0
		\end{tikzcd}
	\]
	is exact. By \cref{prop-condses} it is enough to show that $f$ is a closed map and that for every compact subset $B' \subseteq B$ there is a compact subset $M' \subseteq M$ such that $g(M') \supseteq  B'$.

	$f$ is a continuous map from a compact space to a Hausdorff space, so it must be closed. On the other hand, since $B$ is discrete, its only compact subsets are finite, which are covered by finite (and hence compact) subsets of $M$.

	This short exact sequence of condensed abelian groups induces a distinguished triangle
	\begin{align*}
		\underline{A} [0] \to \underline{M}[0] \to \underline{B}[0] \to \underline{A}[1]
	\end{align*}
	$\condrhom: D(\condab)^{op} \times D(\condab)$ is a morphism of triangulated categories when fixing either argument, so we see that making the calculation for discrete and compact is enough and then we may just complete the triangle. Similar if $N$ is an extension of a discrete abelian group by a compact abelian group.
\end{proof}

\begin{prop}
	\label{prop-rhomMdisc}
	$\condrhom (\underline{M}, \underline{N})$ where $M$ is discrete reduces to $M = \ZZ$.
\end{prop}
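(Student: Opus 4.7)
The plan is to exhibit $\underline{M}$ as the cone of a morphism between direct sums of copies of $\underline{\ZZ}$, and then use that $\condrhom(-,\underline{N})$ is a morphism of triangulated categories together with the fact that it carries direct sums to products.

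First I would pick a free resolution of $M$ as a plain abelian group
\[
	\begin{tikzcd}
		0 \arrow{r} & F_1 \arrow{r}{f} & F_0 \arrow{r}{g} & M \arrow{r} & 0
	\end{tikzcd}
\]
where $F_0 = \bigoplus_I \ZZ$ is free on a generating set of $M$ and $F_1 = \bigoplus_J \ZZ$ is its (necessarily free) kernel. Endow $F_0, F_1$ with the discrete topology, so that all three groups in the sequence are discrete. I then want to check that the condensed sequence $0 \to \underline{F_1} \to \underline{F_0} \to \underline{M} \to 0$ is exact, which I would do by verifying the hypotheses of \cref{prop-condses}: all three groups are Hausdorff; $f$ is a closed map because every subset of the discrete space $F_0$ is closed; and for any compact $K \subseteq M$ the set $K$ is finite (since $M$ is discrete), so a finite set of lifts under $g$ gives a compact $B' \subseteq F_0$ with $g(B') \supseteq K$.

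Next I would invoke \cref{prop-condcolim} to identify $\underline{F_i} = \bigoplus \underline{\ZZ}$ (where the direct sum indexing matches that of $F_i$), so the short exact sequence reads
\[
	\begin{tikzcd}
		0 \arrow{r} & \bigoplus_J \underline{\ZZ} \arrow{r} & \bigoplus_I \underline{\ZZ} \arrow{r} & \underline{M} \arrow{r} & 0.
	\end{tikzcd}
\]
This yields a distinguished triangle in $D(\condab)$, and applying the triangulated functor $\condrhom(-,\underline{N})$ gives a distinguished triangle expressing $\condrhom(\underline{M},\underline{N})$ in terms of $\condrhom(\bigoplus_I \underline{\ZZ},\underline{N})$ and $\condrhom(\bigoplus_J \underline{\ZZ},\underline{N})$. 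Using the remark that $\condrhom$ turns direct sums in the first variable into products,
\[
	\condrhom\!\Bigl(\bigoplus_K \underline{\ZZ}, \underline{N}\Bigr) \cong \prod_K \condrhom(\underline{\ZZ}, \underline{N}),
\]
so the entire computation is reduced to $\condrhom(\underline{\ZZ}, \underline{N})$, as claimed.

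The only place that requires real care is the verification that the condensed sequence remains exact, since surjectivity of $\underline{F_0} \to \underline{M}$ in $\condab$ is not automatic from the abelian-group surjectivity; this is exactly what \cref{prop-condses} is designed to handle, and the discreteness of $M$ makes the compact-lifting hypothesis trivial. Everything else is a formal consequence of the triangulated structure on $D(\condab)$ and the adjunction between $\condrhom$ and the derived tensor product.
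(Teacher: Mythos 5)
Your proof is correct and follows essentially the same route as the paper: a two-term free resolution of the discrete group $M$ (the paper specifically takes $0 \to \ZZ[Q] \to \ZZ[M] \to M \to 0$ with $\ZZ[M]$ free on the underlying set, while you allow any generating set, but this is immaterial), exactness of the condensed sequence via \cref{prop-condses} using closedness of maps of discrete spaces and finiteness of compact subsets, identification of the free condensed groups as $\bigoplus \condZZ$ via \cref{prop-condcolim}, and completion of the distinguished triangle combined with $\condrhom$ sending direct sums in the first argument to products.
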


\begin{proof}
	A discrete group $M$ has a two-term free resolution
	\[
		\begin{tikzcd}
			0 \arrow{r} & \ZZ[Q] \arrow{r} & \ZZ[M] \arrow{r} & M \arrow{r} & 0
		\end{tikzcd}
	\]
	where $\ZZ[Q]$ is the kernel of the obvious map $\ZZ[M] \to M$. Since they are all discrete groups, the map $\ZZ[Q] \to \ZZ[M]$ is closed. Also, any compact subset of $M$ must be finite so it is covered by a finite (and hence compact) subset of $\ZZ[M]$. We may now apply \cref{prop-condses} and see that the condensed sequence is exact.

	We can now write $\ZZ[Q] = \bigoplus_Q \ZZ$ as a sum. Then, by \cref{prop-condcolim}, $\underline{\ZZ[Q]} = \bigoplus _Q \condZZ$ so
	\begin{align*}
		\condrhom (\underline{\ZZ[Q]}, \underline{N}) = \condrhom ( \bigoplus_Q \underline{\ZZ}, \underline{N}) = \prod_Q  \condrhom (\underline{\ZZ}, \underline{N})
	\end{align*}
	Similar for $\condrhom (\underline{\ZZ[M]}, \underline{N})$, so we may just complete the triangle to compute $\condrhom(\underline{M}, \underline{N})$.
\end{proof}

\begin{prop}
	\label{prop-rhomMcompact}
	$\condrhom(\underline{M}, \underline{N})$ where $M$ is compact reduces to $M = \prod_I \underline{\RR/\ZZ}$ for some set $I$.
\end{prop}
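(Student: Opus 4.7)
The plan is to mirror the reduction in \cref{prop-rhomMdisc}, replacing free resolutions by copies of $\ZZ$ with cofree coresolutions by copies of $\RR/\ZZ$, via Pontryagin duality. Concretely, I want to exhibit any compact abelian group $M$ as fitting in a short exact sequence
\begin{align*}
    0 \to M \to \prod_I \RR/\ZZ \to \prod_J \RR/\ZZ \to 0
\end{align*}
of LCA groups, show the condensed version is still exact, and then complete the resulting distinguished triangle.

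First I would use \cref{thm-lcastructure} to note that $D \coloneqq \DD(M)$ is a discrete abelian group. Pick a generating set $I$ to obtain a surjection $\bigoplus_I \ZZ \twoheadrightarrow D$, whose kernel is again a discrete abelian group; pick a generating set $J$ of that kernel to obtain a two-term free resolution
\begin{align*}
    0 \to \bigoplus_J \ZZ \to \bigoplus_I \ZZ \to D \to 0
\end{align*}
in the category of discrete, hence LCA, abelian groups. Applying Pontryagin duality, which is exact on LCA groups, and using $\DD(D) = M$ together with $\DD(\bigoplus_I \ZZ) = \prod_I \DD(\ZZ) = \prod_I \RR/\ZZ$, yields the desired sequence of LCA groups.

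Next I would promote this to a short exact sequence of condensed abelian groups by invoking \cref{prop-condses}. The hypotheses are easy: the inclusion $M \hookrightarrow \prod_I \RR/\ZZ$ is a continuous injection from a compact space into a Hausdorff space, hence a closed map; and the surjection $\prod_I \RR/\ZZ \twoheadrightarrow \prod_J \RR/\ZZ$ has compact source, so any compact subset of the target is trivially covered by the compact subset $\prod_I \RR/\ZZ$ itself. This produces a distinguished triangle
\begin{align*}
    \underline{M}[0] \to \underline{\prod_I \RR/\ZZ}[0] \to \underline{\prod_J \RR/\ZZ}[0] \to \underline{M}[1]
\end{align*}
in $D(\condab)$, and since $\condrhom(-, \underline{N})$ is a morphism of triangulated categories, knowing $\condrhom(\underline{\prod_K \RR/\ZZ}, \underline{N})$ for arbitrary index sets $K$ determines $\condrhom(\underline{M}, \underline{N})$ by completion of the triangle.

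The only real subtlety I anticipate is the exactness of Pontryagin duality on the chosen resolution at the level of topological groups (and not merely abstract groups): one needs that the dual map $\prod_I \RR/\ZZ \to \prod_J \RR/\ZZ$ is actually surjective and that its kernel agrees topologically with $M$. This is where \cref{thm-lcastructure}(2)--(3) does the work, since Pontryagin duality on LCA groups is a contravariant equivalence sending short exact sequences to short exact sequences. Once this is granted, the condensed exactness via \cref{prop-condses} and the triangulated reduction are essentially formal, in complete parallel with the previous reductions in this section.
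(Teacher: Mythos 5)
Your proof is correct and follows essentially the same route as the paper's: dualize to the discrete group $\DD(M)$, choose a two-term free resolution by direct sums of $\ZZ$, apply Pontryagin duality (noting $\DD(\ZZ) = \RR/\ZZ$ and that sums dualize to products) to get the coresolution $0 \to M \to \prod_I \RR/\ZZ \to \prod_J \RR/\ZZ \to 0$, verify the hypotheses of \cref{prop-condses}, and complete the distinguished triangle. The only cosmetic difference is that the paper takes $I = \DD(M)$ directly (using the free abelian group $\ZZ[\DD(M)]$) rather than a chosen generating set, and verifies the covering condition of \cref{prop-condses} by noting that preimages of compact sets are closed in a compact space, whereas you simply observe the whole source is compact; both checks are valid.
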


\begin{proof}
	We will use Pontryagin duality. Since $M$ is compact, by the structure result of locally compact abelian groups, $\DD(M)$ is discrete. This then gives a two-term free resolution
	\[
		\begin{tikzcd}
			0 \arrow{r} & \ZZ[Q] \arrow{r} & \ZZ[\DD(M)] \arrow{r} & \DD(M) \arrow{r} & 0
		\end{tikzcd}
	\]
	Pontryagin duality is an exact functor (it is an equivalence of categories) so, noting that $\DD(\ZZ) = \RR/\ZZ$ and sums factor as products, we have a short exact sequence
	\[
		\begin{tikzcd}
			0 \arrow{r} & M \arrow{r} & \prod_I \RR/\ZZ \arrow{r} & \prod_J \RR/\ZZ \arrow{r} & 0
		\end{tikzcd}
	\]
	for some sets $I, J$. Now we will prove that the condensed sequence is exact.
	
	The first map is from a compact set to a Hausdorff space, so it is closed. Finally, notice that a compact subset of $\prod_J \RR/\ZZ$ is closed so its preimage is a closed subset of $\prod_I \RR/\ZZ$, which must be compact. We may now apply \cref{prop-condses}.

	Hence, the induced sequence is exact and we may again get a distinguished triangle.
\end{proof}

\begin{prop}
	\label{rhom-Mcompact}
	$\condrhom(\underline{M},\underline{N})$ where $N$ is compact reduces to $N = \RR/\ZZ$.
\end{prop}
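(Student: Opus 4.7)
The plan is to mimic the argument of \cref{prop-rhomMcompact}, but applied to the second argument of $\condrhom$ instead of the first. Since $N$ is compact, Pontryagin duality (\cref{thm-lcastructure}) tells us that $\DD(N)$ is discrete. Exactly as in \cref{prop-rhomMdisc}, we can present $\DD(N)$ by a two-term free resolution
\begin{equation*}
   0 \to \ZZ[Q] \to \ZZ[\DD(N)] \to \DD(N) \to 0,
\end{equation*}
where $\ZZ[Q]$ denotes the kernel, which is itself a free abelian group on some set $Q$ (as a subgroup of a free abelian group).

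I would then dualize this sequence. Since Pontryagin duality is exact (being an equivalence), and sends direct sums of $\ZZ$'s to products of $\RR/\ZZ$'s, we obtain the short exact sequence of topological abelian groups
\begin{equation*}
   0 \to N \to \prod_I \RR/\ZZ \to \prod_J \RR/\ZZ \to 0
\end{equation*}
for appropriate index sets $I,J$. Next I would verify that the condensed sequence is exact by checking the hypotheses of \cref{prop-condses}: the injection is closed because $N$ is compact mapping into a Hausdorff space; and surjectivity in the compactness-lifting sense is trivial because $\prod_I \RR/\ZZ$ is itself compact, so it can serve as the preimage $B'$ for any compact subset of $\prod_J \RR/\ZZ$.

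Having established the short exact sequence in $\condab$, I would apply $\condrhom(\underline{M}, -)$, which is a morphism of triangulated categories in the second argument, to obtain a distinguished triangle
\begin{equation*}
   \condrhom(\underline{M}, \underline{N}) \to \condrhom\Bigl(\underline{M}, \prod_I \underline{\RR/\ZZ}\Bigr) \to \condrhom\Bigl(\underline{M}, \prod_J \underline{\RR/\ZZ}\Bigr) \to \condrhom(\underline{M}, \underline{N})[1].
\end{equation*}
By the remark following the theorem on the closed symmetric monoidal structure of $D(\condab)$, $\condrhom$ commutes with products in the second variable, so each of the two middle terms equals $\prod_K \condrhom(\underline{M}, \underline{\RR/\ZZ})$ for the relevant indexing set. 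Completing the triangle then reduces the computation of $\condrhom(\underline{M}, \underline{N})$ to that of $\condrhom(\underline{M}, \underline{\RR/\ZZ})$, as desired. The only step requiring care is the exactness check of the condensed sequence, but as noted this is immediate from compactness; the rest is formal manipulation of distinguished triangles.
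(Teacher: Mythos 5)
Your proposal is correct and follows essentially the same route as the paper: the paper's own proof is the one-line remark ``use the same reasoning with Pontryagin duality and that products commute with $\condrhom$ in the second argument,'' which is exactly the argument you spelled out. The only cosmetic difference is that in verifying the hypotheses of \cref{prop-condses} you take all of $\prod_I \RR/\ZZ$ as the compact $B'$, whereas the paper takes the preimage of the given compact set; both work equally well.
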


\begin{proof}
	Use the same reasoning with Pontryagin duality and that products commute with $\condrhom$ in the second argument.
\end{proof}

\begin{prop}
	\label{prop-rhomRses}
	We have the following similar cases.
	\begin{enumerate}
		\item $\condrhom(\underline{M}, \underline{N})$ where $M = \RR$ reduces to $M = \ZZ$ and $M = \RR/\ZZ$.
		\item $\condrhom(\underline{M}, \underline{N})$ where $N = \RR/\ZZ$ reduces to $N = \RR$ and $N$ discrete.
	\end{enumerate}
\end{prop}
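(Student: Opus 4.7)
The plan is to exploit the short exact sequence
\[
0 \to \condZZ \to \condRR \to \condRRZZ \to 0
\]
that we have already established in the proof of the earlier theorem about $\condrhom(\condRRZZ,\underline{M})$; recall that it was obtained by applying \cref{prop-condses} to the topological short exact sequence $0\to\ZZ\to\RR\to\RR/\ZZ\to 0$, using that $\ZZ\hookrightarrow\RR$ is closed and that every compact subset of $\RR/\ZZ$ lifts to a compact subset of $\RR$. Both halves of the present proposition are immediate consequences of this one sequence together with the fact that $\condrhom$ is a morphism of triangulated categories in each variable.

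For (1), I would fix a condensed abelian group $\underline{N}$ and apply the contravariant exact functor $\condrhom(-,\underline{N})$ to the above short exact sequence. This yields a distinguished triangle
\[
\condrhom(\condRRZZ,\underline{N}) \to \condrhom(\condRR,\underline{N}) \to \condrhom(\condZZ,\underline{N}) \to \condrhom(\condRRZZ,\underline{N})[1]
\]
in $D(\condab)$. Hence $\condrhom(\condRR,\underline{N})$ is determined, up to a distinguished triangle (so up to a quasi-isomorphism from a cone construction), by the two outer terms, i.e.\ by the cases $M=\ZZ$ and $M=\RR/\ZZ$. This is exactly the reduction claimed.

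For (2), I would instead fix $\underline{M}$ and apply the covariant exact functor $\condrhom(\underline{M},-)$ to the same short exact sequence. This gives a distinguished triangle
\[
\condrhom(\underline{M},\condZZ) \to \condrhom(\underline{M},\condRR) \to \condrhom(\underline{M},\condRRZZ) \to \condrhom(\underline{M},\condZZ)[1]
\]
so $\condrhom(\underline{M},\condRRZZ)$ is determined by $\condrhom(\underline{M},\condRR)$ and $\condrhom(\underline{M},\condZZ)$. The latter is the case $N$ discrete (as $\ZZ$ is discrete), and the former is the case $N=\RR$, which is the desired reduction.

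There is essentially no obstacle here: the only nontrivial input is the exactness of the condensed sequence, but that has already been verified in the excerpt. All that remains is to observe that both parts follow by rotating the long exact triangle in the appropriate variable, which I would present in the proof as two short symmetric paragraphs, one per item.
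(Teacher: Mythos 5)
Your proposal is correct and matches the paper's proof, which likewise invokes the short exact sequence $0 \to \condZZ \to \condRR \to \condRRZZ \to 0$ (already verified via \cref{prop-condses}) and the fact that $\condrhom$ sends it to a distinguished triangle in each variable. You have simply spelled out the two triangles that the paper leaves implicit.
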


\begin{proof}
	Use the short exact sequence
	\[
		\begin{tikzcd}
			0 \arrow{r} & \underline{\ZZ}  \arrow{r} & \underline{\RR} \arrow{r} & \underline{\RR/\ZZ} \arrow{r} & 0
		\end{tikzcd}
	\]
\end{proof}

With the above, we can finally compute $\condrhom$ for all locally compact abelian groups. 

\begin{theorem}
	$\condrhom (\underline{M}, \underline{N})$ where $M$ and $N$ are locally compact abelian groups reduces to the cases $M = \prod_I \RR/\ZZ$ for some set $I$ or $M = \ZZ$ and $N = \RR$ or $N$ discrete.
\end{theorem}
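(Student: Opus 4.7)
The plan is to chain together the reduction propositions \cref{prop-rhomlcastruct}, \cref{prop-rhomcompdisc}, \cref{prop-rhomMdisc}, \cref{prop-rhomMcompact}, \cref{rhom-Mcompact}, and \cref{prop-rhomRses}, exploiting the fact that $\condrhom(-,-)$ is a morphism of triangulated categories in each argument and commutes with finite direct sums. Because each reduction step either produces a distinguished triangle in $D(\condab)$ (in one slot at a time) or splits an object as a finite product, the two arguments can be treated independently after an initial joint structure step.

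First I would apply \cref{prop-rhomlcastruct} to both $M$ and $N$ simultaneously, stripping off their Euclidean parts and leaving each of them as either $\RR$ or an extension of a discrete abelian group by a compact abelian group. Next, I would invoke \cref{prop-rhomcompdisc} in each argument in turn: the defining short exact sequence of such an extension remains exact at the condensed level, so it yields a distinguished triangle and reduces the extension case to the pure discrete or pure compact case. After this, both $M$ and $N$ lie in $\{\RR,\ \text{discrete},\ \text{compact}\}$.

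Now I would reduce $M$ and $N$ separately. On the $M$ side, the case $M = \RR$ is handled by \cref{prop-rhomRses}(1) via the triangle coming from $0 \to \underline{\ZZ} \to \underline{\RR} \to \underline{\RR/\ZZ} \to 0$, reducing to $M = \ZZ$ or $M = \RR/\ZZ$; then \cref{prop-rhomMdisc} collapses the discrete case to $M = \ZZ$ and \cref{prop-rhomMcompact} collapses the compact case to $M = \prod_I \RR/\ZZ$. On the $N$ side, the only non-target case left is $N$ compact: I would apply \cref{rhom-Mcompact} to reduce to $N = \RR/\ZZ$, and then \cref{prop-rhomRses}(2) to push this further to $N = \RR$ or $N$ discrete. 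Assembling these gives the allowed pairs $(M,N) \in \{\ZZ,\ \prod_I \RR/\ZZ\} \times \{\RR,\ \text{discrete}\}$.

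The main point to check is essentially bookkeeping rather than new mathematics: one must be sure that each reduction step does not spoil the form already achieved in the other argument. This is automatic because every step proceeds either by invoking bifunctoriality of $\condrhom$ on a distinguished triangle in a single slot (so the object in the other slot is untouched) or by expressing one argument as a direct sum and using that $\condrhom$ commutes with finite products in both variables. Hence the chain of reductions compiles correctly, and the theorem follows.
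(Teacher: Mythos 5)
Your proposal is correct and follows essentially the same reduction chain the paper presents: \cref{prop-rhomlcastruct} to strip off $\RR^n$ factors, \cref{prop-rhomcompdisc} to split the discrete-by-compact extensions, then \cref{prop-rhomMdisc}, \cref{prop-rhomMcompact}, \cref{rhom-Mcompact}, and \cref{prop-rhomRses} to collapse to the stated endpoints, with bifunctoriality of $\condrhom$ on distinguished triangles (and compatibility with finite products) guaranteeing that the two arguments may be reduced independently. The paper itself leaves this as bookkeeping and records exactly the diagrams you describe, so your argument is a faithful expansion of the intended proof.
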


Observe that all of these cases were dealt with in the previous section.

The following diagrams summarise the simplifications made for $\condrhom (\underline{M}, \underline{N})$.
\[
	\begin{tikzcd}[sep=small]
		& & & \text{$M$ LCA} \arrow[lld, "\text{\cref{prop-rhomlcastruct}}", swap]  \arrow{rrd}{\text{\cref{prop-rhomlcastruct}}} & & & \\
		& M = \RR \arrow[ld, "\text{\cref{prop-rhomRses}}"] \arrow{rd} & & & & \begin{matrix} \text{$M$ ext. of disc.} \\ \text{by compact} \end{matrix} \arrow[ld, "\text{\cref{prop-rhomcompdisc}}", swap] \arrow[rd, "\text{\cref{prop-rhomcompdisc}}"] & \\
		M = \ZZ & & M = \RR/\ZZ & & \text{$M$ disc.} \arrow[d, "\text{\cref{prop-rhomMdisc}}"] & & \text{$M$ compact} \arrow[d, "\text{\cref{prop-rhomMcompact}}", swap]\\
		& & & & M = \ZZ &  & M = \prod_I \RR/\ZZ 
	\end{tikzcd}
\]

\[
	\begin{tikzcd}[sep=small]
		& & & \text{$N$ LCA} \arrow[lld, "\text{\cref{prop-rhomlcastruct}}", swap] \arrow[rrd, "\text{\cref{prop-rhomlcastruct}}"] & & & & \\
		& N = \RR & & & & \begin{matrix} \text{$N$ ext. of disc.} \\ \text{by compact} \end{matrix} \arrow[ld, "\text{\cref{prop-rhomcompdisc}}", swap] \arrow[rd, "\text{\cref{prop-rhomcompdisc}}"] & & \\
		& & & & \text{$N$ disc.} & & \text{$N$ compact} \arrow[d, "\text{\cref{prop-rhomMcompact}}"] & \\
		& & & & & & N = \RR/\ZZ \arrow[ld, "\text{\cref{prop-rhomRses}}", swap] \arrow[rd, "\text{\cref{prop-rhomRses}}"] \\
		& & & & & N = \RR & & \text{$N$ disc.}
	\end{tikzcd}
\]

This leads to the following interesting corollary. By following through all the computations in the previous lemmas one can show that higher $\underline{\ext}$ groups vanish (c.f. \cite{condensed}). This means that in the following examples we will focus on the first two ext groups.

\begin{cor}
	\label{cor-vanishext}
	If $M$ and $N$ are locally compact abelian groups,
	\begin{align*}
		\underline{\ext}^i (\underline{M}, \underline{N}) = 0
	\end{align*}
	for $i \geq 2$.
\end{cor}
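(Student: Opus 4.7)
The plan is to combine the two reduction diagrams with the base-case computations of the previous subsection, observing that vanishing of $\condext^i$ for $i \geq 2$ is stable under every manipulation used in those reductions. Tracing either diagram, an arbitrary pair of LCA groups $(M,N)$ is eventually reduced to one of four base cases in which $M$ is either $\condZZ$ or $\prod_I \condRRZZ$ and $N$ is either $\condRR$ or of the form $\underline{N_0}$ for a discrete abelian group $N_0$. It therefore suffices to verify vanishing in these four base cases and then reverse the reductions.

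First I would verify the four base cases directly from what has already been proved. For $M = \condZZ$, \cref{lemma-rhomZZ} gives $\condrhom(\condZZ, \underline{N}) = \underline{N}[0]$, so $\condext^i = 0$ for all $i \neq 0$. For $M = \prod_I \condRRZZ$ and $N = \condRR$, the group $\prod_I \RR/\ZZ$ is compact by Tychonoff's theorem, so the earlier result that $\condrhom(\underline{A}, \condRR) = 0$ for compact $A$ gives the vanishing. Finally, for $M = \prod_I \condRRZZ$ and $N$ discrete, the theorem $\condrhom(\prod_I \condRRZZ, \underline{N}) = \bigoplus_I \underline{N}[-1]$ places the complex in degree $1$, so once again $\condext^i = 0$ for $i \geq 2$.

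Next I would propagate the vanishing backwards through the diagrams. Every arrow is one of two types. The first is an application of $\condrhom$ to a short exact sequence of LCA groups, whose condensation remains exact by \cref{prop-condses}; this produces a distinguished triangle and hence a long exact sequence of $\condext$ groups in either variable, so for $i \geq 2$ vanishing of the two outer terms forces it of the middle one. The second is a decomposition into a direct sum or product, such as splitting off an $\RR^n$ factor via \cref{thm-lcastructure}, or passing through a free (or cofree) resolution such as $0 \to \ZZ[Q] \to \ZZ[M] \to M \to 0$ from \cref{prop-rhomMdisc} and its Pontryagin dual from \cref{prop-rhomMcompact}. Here one uses that $\condrhom$ converts arbitrary direct sums in the first argument into products and respects products in the second, together with the exactness of products in $\condab$, so $\condext^i$ commutes with both operations and vanishing is preserved.

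The main obstacle is really just the bookkeeping: confirming at every node of the two diagrams that one falls into one of the two situations above. All the substantive analytic input---exactness of the various condensed short exact sequences via \cref{prop-condses}, Pontryagin duality, and the LCA structure theorem---has already been absorbed into the preceding propositions, so once the two preservation principles above are in hand, the vanishing descends from the four base cases to every pair of LCA groups without further analytic work.
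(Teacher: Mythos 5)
Your plan is the same one the paper gestures at: trace both reduction diagrams down to the four base cases, verify vanishing there from the computations already proved, and then push the vanishing back up through the tree. The base case verification is correct --- $\condrhom(\condZZ,\underline{N})=\underline{N}[0]$, $\condrhom(\prod_I\condRRZZ,\condRR)=0$, and $\condrhom(\prod_I\condRRZZ,\underline{N})=\bigoplus_I\underline{N}[-1]$ are all concentrated in degrees $\leq 1$. So the approach is sound and matches the paper's (unexpanded) argument.

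However, one of your two preservation principles is stated more sweepingly than it can be used. You write that "for $i \geq 2$ vanishing of the two outer terms forces it of the middle one," but in several of the reduction steps the object being reduced is \emph{not} in the middle of the short exact sequence --- it is a kernel or a cokernel. For instance, in \cref{prop-rhomMdisc}, the discrete group $M$ is the cokernel of $0 \to \ZZ[Q] \to \ZZ[M] \to M \to 0$, so the relevant piece of the long exact sequence is
\begin{align*}
\condext^{i-1}(\underline{\ZZ[Q]},\underline{N}) \to \condext^i(\underline{M},\underline{N}) \to \condext^i(\underline{\ZZ[M]},\underline{N}),
\end{align*}
and for $i=2$ you need vanishing of the left term in degree $1$, not degree $\geq 2$. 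The same degree shift by one occurs in the $N$-compact reduction $0 \to N \to \prod \RR/\ZZ \to \prod \RR/\ZZ \to 0$, where $N$ is the kernel and the degree-$(i-1)$ term of the product appears. The vanishing therefore does not descend formally from degree $\geq 2$; you also need to know that $\condext^1$ vanishes for the intermediate objects ($\bigoplus\condZZ$ and $\prod\condRRZZ$ against the relevant second arguments). Luckily this is true --- those $\condrhom$ complexes are concentrated in degree $0$ or $1$, as the computations show --- but it must be checked, and the check is only clean if you reduce $M$ to its base cases first and then reduce $N$, so that the degree-$1$ claims you need are exactly the ones already computed. Making this order of reduction and the extra degree-$1$ verification explicit would close the gap; as written the preservation step is slightly too optimistic.
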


\begin{rem}
	This seems to suggest that condensed abelian groups might have dimension $2$. However, we remark that this is not true.

	By \cref{thm-condcohomology}, for a compact Hausdorff space $S$,
	\begin{align*}
		\ext^i (\ZZ[\underline{S}], \condZZ) = H^i (S, \ZZ) \cong H^i _{\text{sheaf}} (S, \ZZ)
	\end{align*}
	which does not always vanish for $i \geq 2$ so it is impossible that we have projective dimension $2$.
\end{rem}

\subsection{Examples}

\begin{example}
	With the above one can obtain rather unsurprisingly that
	\begin{align*}
		\condrhom(\underline{\RR}, \underline{\RR^{\text{disc}}}) = 0
	\end{align*}
	since $\condrhom (\underline{\ZZ}, \underline{\RR^{\text{disc}}}) = \underline{\RR^{\text{disc}}}$ and $\condrhom(\underline{\RR/\ZZ}, \underline{\RR^{\text{disc}}}) = \underline{\RR^{\text{disc}}}[-1]$ so we may just complete the distinguished triangle induced from the exact sequence $0 \to \ZZ \to \RR \to \RR/\ZZ \to 0$.
\end{example}

\begin{example}
	\label{example-extRRZZ}
	One can also prove that
	\begin{align*}
		\condhom (\condQQdisc, \condRRZZ) = \underline{\hat{\QQ}} \\
		\condext^1 (\condQQdisc, \condRRZZ) = 0
	\end{align*}
	where $\hat{\QQ}$ is the solenoid, the Pontryagin dual of $\QQdisc$.

	The first part is clear. We may now use again a two-term resolution of condensed abelian groups. The long exact sequence on $\ext$ groups gives
	\begin{align*}
		0 \to \hat{\QQ} \to \widehat{\ZZ[\QQ]} \to \widehat{\ZZ[K]} \to 0
	\end{align*}
	All these groups are Pontryagin duals of discrete spaces so they are compact. Hence, the first map is from a compact space to a Hausdorff space, so it is closed. Any compact subset of $\widehat{\ZZ[K]}$ is closed so it has a closed and therefore compact preimage in $\widehat{\ZZ[\QQ]}$. By \cref{prop-condses} the condensed sequence is exact.

	Using the long exact sequence for enriched $\ext$ groups,
	\begin{align*}
		0 \to \underline{\hat{\QQ}} \to \underline{\widehat{\ZZ[\QQ]}} \to \underline{\widehat{\ZZ[K]}} \to \condext^1 (\condQQdisc, \condRRZZ) \to 0
	\end{align*}
	But due to the condensed short exact sequence, $\underline{\widehat{\ZZ[\QQ]}} \to \underline{\widehat{\ZZ[K]}}$ is a surjection and
	\begin{align*}
		\condext^1 (\condQQdisc, \condRRZZ) = 0
	\end{align*}
\end{example}

\begin{example}
	Suppose we want to calculate $\condrhom(\condRRdisc, \condRR)$.
	
	By Zorn's lemma, choose a basis $\mathcal{B}$ of $\RR$ over $\QQ$. \cite{Neumann} gives explicit examples of algebraically independent subsets of $\RR$ over $\QQ$ of cardinality $2^{\aleph_0}$ and $2^{\aleph_0}$ is also a trivial upper bound for $|\mathcal{B}|$ so $|\mathcal{B}| = 2^{\aleph_0}$. Then,
	\begin{align*}
		\condrhom(\condRRdisc, \condRR) = \condrhom (\bigoplus _\mathcal{B} \condQQdisc, \condRR) = \prod_\mathcal{B} \condrhom( \condQQdisc, \condRR) = \condrhom(\condQQdisc, \condRR)^{2^{\aleph_0}}
	\end{align*}
	We first have that
	\begin{align*}
		\condext ^0 (\condQQdisc, \condRR) = \condhom (\condQQdisc, \condRR) = \underline{\Hom (\QQdisc, \RR)}
	\end{align*}
	Any function $\QQdisc \to \RR$ is continuous so we only need to look at group homomorphisms, which are functions satisfying $f(x+y) = f(x) + f(y)$ for all $x,y \in \QQdisc$.

	This is exactly Cauchy's functional equation, whose only solutions are of the form $f(x) = \alpha x$ for some $\alpha \in \RR$ \cite[Chapter 5]{Cauchy-eq}, so the underlying set is $\RR$. Now we must compute the topology.

	We claim that the compact-open topology gives $\RR$ its natural topology. There is a subbasis consisting of the following sets, for compact $K \subseteq \QQdisc$ and open $U \subseteq \RR$
	\begin{align*}
		V(K, U) = \{ \alpha \in \RR | \alpha K \subseteq U\}
	\end{align*}
	Clearly, $V(\{1\}, (a,b)) = (a,b)$. Open intervals generate the Euclidean topology on $\RR$, so the compact-open topology is no coarser than the natural one.

	Only finite sets of $\QQdisc$ are compact. We write $V(K,U)$ as an intersection of finitely many open sets.
	\begin{align*}
		V(K,U) = \bigcap _{k \in K} V(\{k\}, U)
	\end{align*}
	Also, $U$ is open so it can be written as $U = \bigcup_{i \in I} (a_i, b_i)$ and clearly $V(K, U) = \bigcup_{i \in I} V(K, (a_i, b_i))$. Finally, notice that $V(\{k\}, (a_i, b_i))) = (a_i/k, b_i/k)$. Hence,
	\begin{align*}
		V(K,U) = \bigcap_{k \in K} \bigcup _{i \in I} (a_i/k, b_i/k)
	\end{align*}
	which is clearly open under the natural topology of $\RR$ (we omit the case $k=0$ because $V(\{0\}, U) = \emptyset$ or $\RR$ so it follows trivially). This shows that the compact-open topology gives $R$ its natural topology.

	Hence,
	\begin{align*}
		\condext^0 (\condRRdisc, \condRR) = \condRR^{2^{\aleph_0}}
	\end{align*}
	The next enriched extension group is not known by the author. By our methods described, one would use a two-term free resolution
	\begin{align*}
		0 \to \ZZ[K] \to \ZZ[\QQ] \to \QQdisc \to 0
	\end{align*}
	to obtain a distinguished triangle. The main problem is the behaviour of the map $\ZZ[K] \to \ZZ[\QQ]$, since it is difficult to obtain an explicit description of the kernel of $\ZZ[\QQ] \to \QQ$.

	It is conjectured by the author that $\condext^1 (\condQQdisc, \condRR) = 0$, by comparison with the category of abelian groups. Additionally, due to the fully faithful embedding from the category of LCA groups \cite[Corollary 4.9]{condensed}, we know that $\condext^1 (\condQQdisc, \condRR) (*) = 0$ by the known result there.
\end{example}

\section{A prismatic construction of the real numbers}
\label{section-prismatic}

This section constructs the real numbers as a quotient of condensed rings. The motivation for such a construction lies in the theory of $p$-liquid vector spaces, a full subcategory of condensed vector spaces stable under all limits, colimits and extensions \cite{analytic}. This is the functional analysis analogue to the solid theory for condensed abelian groups \cite{condensed}.

The building blocks for this theory are condensed objects of the form $\mathcal{M}_p (S)$, which can be thought of as $p$-measures on a profinite set $S$. To obtain a category similar to solid abelian groups, we would want these to be projective. This is not quite the case, but one can obtain the close result that $\ext^i (\mathcal{M}_p (S) , V) = 0$ for $i > 0$ and any $p'$-Banach vector space with $0 < p < p' \leq 1$.

To simplify the problem, Scholze and Clausen build $\RR$ as a quotient of discrete spaces. This section will only explore this quotient, without any further analysis of the liquid theory.

\subsection{The main result}

We first start with the condensed ring from which we will construct the real numbers.

\begin{defi}
	For $0<r<1$ and $c > 0$ we define the condensed set $\condZZTrc$ with $S$-valued points
	\begin{align*}
		\condZZTrc(S) := \{\sum_{n \gg -\infty} a_n T^n | a_n \in C(S, \ZZ), \forall s \in S \; \sum |a_n (s)| r^n \leq c\}
	\end{align*}
	This allows the definition of the condensed ring
	\begin{align*}
		\ZZTr = \bigcup_{c > 0} \ZZTrc
	\end{align*}
\end{defi}

\begin{rem}
	One would need to check that $\condZZTrc$ is a condensed set. Rather than carrying out a construction with Kan extensions, we will prove in the next subsection that it is the condensation of a T1 topological space.

	Also, observe that one obtains a ring at each section after the union, so the resulting condensed set $\ZZTr$ is actually a condensed ring.
\end{rem}

We now have the following result.

\begin{theorem}
	\label{thm-main}
	Let $0 < r' < r < 1$ and consider the map
	\begin{align*}
		\theta_{r'}:\ZZTr & \to \condRR \\
		\sum a_n T^n & \mapsto \sum a_n (r')^n
	\end{align*}
	This is a surjection and the kernel at every section is a principal ideal generated by a non-zero divisor.
\end{theorem}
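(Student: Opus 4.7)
The plan is to prove surjectivity and the principal kernel structure separately via explicit coefficient-level constructions on each extremally disconnected section $S$.

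For surjectivity, the idea is a base-$(1/r')$ expansion with integer digits. Starting from $f_0 = f \in C(S,\RR)$, I would iteratively choose locally constant $a_n \colon S \to \ZZ$ approximating $f_n$ within distance $1$ and set $f_{n+1} = (f_n - a_n)/r'$. The Stonean hypothesis on $S$ is the crucial input here: pulling back a cover of $\RR$ by open intervals of length just over $1$ centred at integers gives a finite open cover of the compact $S$, and every finite open cover of a compact extremally disconnected space admits a refinement by a clopen partition — on each piece of which $a_n$ is set to the corresponding integer. One then checks that $|f_n|$ stabilises to $\max(\|f\|_\infty, 1/r')$, so $|a_n|$ is uniformly bounded in $n$ and $s$, hence $\sum_n a_n T^n$ lies in $\ZZTrc(S)$ for some $c$ and maps to $f$ under $\theta_{r'}$.

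For the kernel, the plan is to build a single element $\phi \in \ZZTr$ at the point section that generates the kernel at every $S$. Since $r' < 1$, a greedy base-$r'$ expansion produces bounded non-negative integers $c_n$ (for $n \geq 2$) with $\sum_{n\geq 2} c_n (r')^{n-1} = 1$; setting
\[
\phi(T) = T - \sum_{n \geq 2} c_n T^n \in \ZZTr
\]
gives $\phi(r')=0$ and $\phi\neq 0$. Given $f \in \ZZTr(S)$ in the kernel of $\theta_{r'}$, I would solve $\phi h = f$ by equating coefficients: the forward recursion
\[
h_{n-1}(s) = f_n(s) + \sum_{k \geq 2} c_k\, h_{n-k}(s),
\]
with initial condition $h_{n_0 - 1}(s) = f_{n_0}(s)$ at the lowest degree $n_0$ of $f$, determines continuous integer-valued $h_n$. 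It then remains to show that $\sum_n |h_n(s)| r^n$ is uniformly bounded in $s$, so that $h$ lies in $\ZZTr(S)$. Non-zero divisibility of $\phi$ at each section is a formality afterwards: $\ZZTr(*)$ embeds into the integral domain of analytic functions on $|T| \leq r$ (with pole allowed at $0$), so any section-level identity $\phi g = 0$ must hold pointwise in $s$, forcing $g(s) = 0$ for every $s$.

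The main obstacle is the uniform convergence bound on $h$. The forward recursion has geometric growth ratio up to $c_2 = \lceil 1/r' \rceil$, which generically exceeds $1/r$ and makes a naive estimate on $\sum_n |h_n(s)| r^n$ diverge. The hypothesis $\theta_{r'}(f) = 0$ — i.e., $\sum_k f_k(s)(r')^k = 0$ — is exactly what lets one rewrite $h_n(s)$ as a convergent \emph{tail} sum $\sum_{k > n} (\text{integer weights})\, f_k(s)$, swapping a divergent forward accumulation for a tail whose effective geometric ratio is $r'/r < 1$. Identifying the combinatorial weights and proving the resulting $\ell^1$-bound $\|h\|_r \leq C(r,r') \|f\|_r$ is the only substantive technical point of the proof; everything else — the base expansion for surjectivity, the explicit form of $\phi$, integer-valuedness of the $h_n$, and non-zero divisibility — is bookkeeping.
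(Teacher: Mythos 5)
Your surjectivity argument is correct and takes a genuinely different route from the paper. The paper performs the base-$r'$ digit construction pointwise in $\RR$ (\cref{lemma-ZZTrccover}), shows $\theta_{r'}$ restricted to the compact set $\ZZTrc$ is continuous (\cref{lemma-thetacontinuous}, \cref{lemma-ZZTrccompact}), and then lifts any $f\colon S\to\RR$ through the compact fibre $\theta_{r'}^{-1}(f(S))\cap\ZZTrc$ using projectivity of $S$ in compact Hausdorff spaces. You instead run the digit algorithm directly over $S$, using clopen refinements of finite covers to keep each digit locally constant. Both are valid; yours trades the projectivity black box for section-level bookkeeping, and incidentally applies to any profinite $S$, not only extremally disconnected ones.

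The kernel argument has two genuine gaps. First, your $\phi = T\bigl(1 - \sum_{n\ge 2} c_n T^{n-1}\bigr)$ is arranged to vanish at $T=r'$ (and at $T=0$, but $T$ is a unit in $\ZZTr$); nothing in the greedy construction forbids further zeros of $1 - \sum_{n\ge 2} c_n T^{n-1}$ in the annulus $r' < |T| \le r$. If such a zero $z$ exists, choose $f \in \ker\theta_{r'}$ with $f(z)\neq 0$: then $h = f/\phi$ has a pole at $z$ with $|z|\le r$, so $\sum_n |h_n| r^n = \infty$ and $h\notin\ZZTr(S)$, meaning $\phi$ does not generate the kernel at all. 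The paper obtains exactly this zero-location control for free by quoting \cref{thm-zztrpid}: $\ZZ((T))_{>r}$ is a PID and the generator $f_{r'}$ it produces is, as part of the cited statement, zero-free on $\overline{B}(0,r)\setminus\{r',\overline{r'}\}$. Your hand-built $\phi$ carries no such guarantee, and proving it would amount to a nontrivial statement about zeros of greedy Parry-type series. Second, the $\ell^1$-bound $\sum_n |h_n(s)| r^n \le C(r,r')\,\sum_n |f_n(s)| r^n$ that you flag as the crux is not bookkeeping: the naive estimate fails precisely because $\phi^{-1}$, as a power series at $0$, has radius of convergence only $r' < r$. You correctly observe that $\theta_{r'}(f)=0$ must be used to convert the unstable forward recursion for $h_n$ into a convergent tail sum, but you do not identify the weights or prove the geometric decay; that step, together with the zero-freeness above, is essentially all of the paper's \cref{lemma-kerneltheta} plus the PID theorem it invokes. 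As written, the principal-generation half of the theorem is unproven.
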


\begin{rem}
	This result looks initially rather unintuive but it is best understood in terms of decimal expansion. Indeed, set $r' = \frac{1}{10}$. Then, this is equivalent to writing numbers in the form
	\begin{align*}
		\sum_{n \gg - \infty} a_n 10^{-n}
	\end{align*}
	which is precisely decimal expansion. In this case, we will prove later that the kernel is generated at every section by the polynomial $10T - 1$, which would be the expected behaviour.
\end{rem}

We highlight the similarity with the following number-theoretic result.

\begin{theorem}
	Let $K$ be a complete discretely valued extension of $\QQ_p$ with perfect residue field $k$. Then, the ring of integers $\mathcal{O}_K$ can be written as a quotient of $W(k)[[T]]$ by a principal ideal generated by an Eisenstein polynomial $f$.
\end{theorem}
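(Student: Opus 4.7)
The plan is to realise $\mathcal{O}_K$ as the image of a natural surjection $\phi: W(k)[[T]] \to \mathcal{O}_K$ sending $T$ to a uniformizer $\pi$ of $\mathcal{O}_K$, and then identify the kernel as the ideal generated by the minimal polynomial of $\pi$ over $W(k)$, which will turn out to be Eisenstein.

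First I would invoke the Cohen structure theorem (or equivalently the existence of the maximal unramified subring) to embed $W(k) \hookrightarrow \mathcal{O}_K$: since $k$ is perfect of characteristic $p$ and $\mathcal{O}_K$ is a $p$-adically complete discrete valuation ring with residue field $k$, there is a unique ring homomorphism $W(k) \to \mathcal{O}_K$ lifting $\mathrm{id}_k$ on residue fields, and it is injective. Let $e$ denote the ramification index of $K/W(k)[1/p]$, pick a uniformizer $\pi$ of $\mathcal{O}_K$, and observe that $\pi$ is topologically nilpotent in the $\pi$-adic (equivalently $p$-adic) topology of the complete ring $\mathcal{O}_K$. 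Thus evaluation at $\pi$ defines a continuous ring map $\phi: W(k)[[T]] \to \mathcal{O}_K$, $T \mapsto \pi$. Surjectivity is the standard fact that $\mathcal{O}_K$ is a free $W(k)$-module with basis $1, \pi, \ldots, \pi^{e-1}$ (because $K/W(k)[1/p]$ is totally ramified of degree $e$); in particular $W(k)[\pi] = \mathcal{O}_K$, so already the polynomial subring $W(k)[T]$ surjects onto $\mathcal{O}_K$.

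Next, let $f(T) \in W(k)[T]$ be the minimal polynomial of $\pi$ over $\mathrm{Frac}(W(k))$; it is monic of degree $e$ and lies in $W(k)[T]$ because $\pi$ is integral over $W(k)$. Since the extension $K/W(k)[1/p]$ is totally ramified of degree $e$ with uniformizer $\pi$, one has $v_K(f(0)) = v_K(\mathrm{Nm}(\pi)) = e$ and $v_K(a_i) \geq e$ for $i > 0$; translating to $v_{W(k)}$, this says $p \mid a_i$ for all $i < e$ and $p^2 \nmid a_0$, i.e.\ $f$ is Eisenstein. By construction $f(T) \in \ker \phi$, giving an induced surjection $\bar{\phi}: W(k)[[T]]/(f) \twoheadrightarrow \mathcal{O}_K$.

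To upgrade $\bar{\phi}$ to an isomorphism I would use Weierstrass division by the distinguished polynomial $f$: every $g \in W(k)[[T]]$ can be written uniquely as $g = qf + r$ with $q \in W(k)[[T]]$ and $r \in W(k)[T]$ of degree less than $e$. This presents $W(k)[[T]]/(f)$ as a free $W(k)$-module with basis $1, T, \ldots, T^{e-1}$, which $\bar{\phi}$ sends bijectively to the $W(k)$-basis $1, \pi, \ldots, \pi^{e-1}$ of $\mathcal{O}_K$; hence $\bar{\phi}$ is an isomorphism and $\ker\phi = (f)$, as required. The main obstacle is the input of the Cohen structure theorem embedding $W(k)$ into $\mathcal{O}_K$ — everything else (the Eisenstein property from total ramification and the free module identification via Weierstrass preparation) is a direct computation once that identification is in hand.
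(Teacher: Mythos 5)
Your proof is correct. Note that the paper states this theorem purely as a classical analogy to motivate the term ``prismatic'' and gives no proof of its own, so there is nothing to compare against; your argument is the standard one (essentially Serre's \emph{Local Fields} / the Cohen structure theorem): the unique lift $W(k)\to\mathcal{O}_K$ of $\mathrm{id}_k$, the observation that $K/W(k)[1/p]$ is totally ramified of degree $e=v_K(p)$ (finite since the valuation is discrete), so that $\mathcal{O}_K$ is free over $W(k)$ on $1,\pi,\ldots,\pi^{e-1}$ and the minimal polynomial of $\pi$ is Eisenstein, and finally Weierstrass division by the distinguished polynomial $f$ to see that $T\mapsto\pi$ induces an isomorphism $W(k)[[T]]/(f)\cong\mathcal{O}_K$. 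The only step you leave implicit is the valuation-counting argument that forces the coefficients of $f$ to satisfy the Eisenstein conditions, but that is routine given total ramification, so the proposal stands as a complete proof.
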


This is the reason for calling this a prismatic construction. By Scholze's and Bhatt's recent work in prismatic cohomology, the pair $(W(k)[[T]], (f))$ is a prism \cite{prismatic}.

\subsection{A helpful ring}

We first look at a non-condensed version of $\ZZTrc$.

\begin{defi}
	For $0 < r < 1$ and $c > 0$ define the topological space
	\begin{align*}
		\ZZTrc := \{ \sum_{n \gg - \infty} a_n T^n | a_n \in \ZZ, \sum |a_n|r^n \leq c \}
	\end{align*}
	With the topology induced by the $T$-adic norm
	\begin{align*}
		\norm{\sum a_n T^n} = \delta^{v_T(f)}
	\end{align*}
	for some fixed $\delta \in (0,1)$ where
	\begin{align*}
		v_T (f) = \sup\{n \in \ZZ: a_n = 0\} 
	\end{align*}
\end{defi}

The following lemma shows that our notation is consistent.

\begin{lemma}
	\label{lemma-condZZTrc}
	The condensed set $\condZZTrc$ is the condensation of $\ZZTrc$.
\end{lemma}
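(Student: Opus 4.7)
The plan is to apply Proposition 2.15 to $\ZZTrc$ with its $T$-adic topology: since distinct Laurent series differ in some coefficient and therefore have finite $v_T$-valuation of difference, the $T$-adic norm is a genuine metric, so $\ZZTrc$ is Hausdorff and in particular T1. That proposition then produces a condensed set $\underline{\ZZTrc}$ with $\underline{\ZZTrc}(S)=C(S,\ZZTrc)$ on every extremally disconnected $S$. The task reduces to identifying $C(S,\ZZTrc)$ naturally in $S$ with the set $\condZZTrc(S)$ defined section-wise in the statement.

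The technical input I would set up first is a uniform lower bound on supports. Any nonzero integer coefficient has absolute value at least $1$, so the constraint $\sum|a_n|r^n\le c$ forces $a_n=0$ whenever $r^n>c$, i.e.\ for all $n<N_0:=\ceil{\log c/\log r}$; this bound depends only on $c$ and $r$, not on the section or on the element. Hence both sides under consideration consist of series supported on indices $n\ge N_0$, which makes the informal "$n\gg-\infty$" genuinely uniform.

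With this in hand I would build the bijection $C(S,\ZZTrc)\to\condZZTrc(S)$ by sending a continuous $f$ to its tuple of coefficient functions $a_n(s):=[T^n]f(s)$. Continuity of each $a_n\colon S\to\ZZ^{\mathrm{disc}}$ follows from continuity of $f$: the ball $\{g:v_T(g-f(s_0))>n\}$ is open in the $T$-adic topology, and its preimage under $f$ is an open neighborhood of $s_0$ on which the $n$-th coefficient is constantly $a_n(s_0)$. Conversely, given continuous $a_n\colon S\to\ZZ$ satisfying the summability bound section-wise, I would set $f(s):=\sum a_n(s)T^n$ and check continuity at $s_0$: for every $N\ge N_0$, the set $\bigcap_{N_0\le n\le N}a_n^{-1}(a_n(s_0))$ is an open neighborhood of $s_0$ on which $v_T(f(s)-f(s_0))>N$, and the uniform lower bound is precisely what makes this a \emph{finite} intersection.

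The two assignments are manifestly mutually inverse and natural in $S$ (restriction along $S'\to S$ corresponds to composition on either side), giving the required isomorphism of condensed sets. The only genuine obstacle is securing the uniform lower bound on supports; without it, continuity of $f$ would encode infinitely many independent conditions simultaneously and fail to decouple into locally constant coefficient functions. Everything else is routine manipulation with the metric topology.
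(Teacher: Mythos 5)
Your proof is correct and reaches the same conclusion by essentially the same mechanism — identifying $C(S,\ZZTrc)$ with $\condZZTrc(S)$ by extracting coefficient functions and checking continuity in both directions via the ultrametric structure of the $T$-adic topology. The one genuine difference is how you secure the uniform lower bound on supports. The paper invokes compactness of $S$ twice: once to bound the image of $F$ below, and again (in the converse direction) to get that each $a_n$ partitions $S$ into \emph{finitely} many clopen sets before intersecting. You instead observe that integrality forces $a_n=0$ whenever $r^n>c$, so the support threshold $N_0=\ceil{\log c/\log r}$ depends only on $c$ and $r$ and is automatic at every point; the intersection $\bigcap_{N_0\le n\le N}a_n^{-1}(a_n(s_0))$ is then finite for free, with no appeal to compactness of the profinite test object. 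This is a cleaner argument and, incidentally, shows the identification $C(S,\ZZTrc)\cong\condZZTrc(S)$ really only uses that $S$ is a topological space on which $\ZZ$-valued continuous functions are locally constant. Both versions are complete; yours isolates the reason the ``$n\gg-\infty$'' condition uniformizes more sharply than the paper does.
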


\begin{proof}
	Fix extremally disconnected $S$. We must prove that
	\begin{align*}
		C(S,\ZZTrc) = \condZZTrc(S) = \{ \sum_{n \gg - \infty} a_n T^n | a_n \in C(S, \ZZ), \forall s \in S\; \sum |a_n(s)| r^n  \leq c \}
	\end{align*}
	Let $F \in C(S, \ZZTrc)$. Then, for any $s \in S$ we can write
	\begin{align*}
		F(s) = \sum_{n \in \ZZ} a_n(s) T^n
	\end{align*}
	for some $a_n : S \to \ZZ$ not necessarily continuous, where for all $s \in S$, $\sum |a_n(s)|r^n \leq c$. The space $S$ is compact, so its image is bounded. Hence, there is $k \in \ZZ$ such that $a_n = 0$ for all $n < k$. It is clear that for all $s \in S$, $\sum |a_n| r^n \leq c$.

	We must now prove the $a_n$ are continuous. We will show that for any $A \subseteq S$, $a_n(\overline{A}) \subseteq \overline{a_n(A)}$. Since $\ZZ$ has the discrete topology, this amounts to proving $a_n(\overline{A}) \subseteq a_n(A)$.

	Let $s \in \overline{A}$. Then, $F$ is continuous so $F(s) \in \overline{F(A)}$ and for any $\epsilon > 0$, there exists $s_\epsilon \in A$ such that $\norm{F(s) - F(s_\epsilon)}< \epsilon$. Setting $\epsilon < \delta^n$ we see that the coefficients of $T^k$ must coincide for $k \leq n$, so $a_n(s) = a_n (s_\epsilon) \in a_n (A)$. Hence, $a_n$ is continuous.

	To prove the other direction, one must show that if $a_n \in C(S, \ZZ)$ and for all $s \in S$, $\sum_{n \gg - \infty} |a_n(s)| r^n \leq c$, then
	\begin{align*}
		F = \sum_{n \gg - \infty} a_n T^n \in C(S, \ZZTrc)
	\end{align*}
	Let $A \subseteq S$ and $s \in \overline{A}$. Each $a_n$ partitions $S$ into finitely many clopen subsets, so by taking intersections, for any $N \in \NN$ there are clopen sets $U_1,\dots , U_k$ such that $a_n$ is constant on $U_i$ for $n \leq N$.

	$s \in U_i$ for some $i$. Since $U_i$ is open and $s \in \overline{A}$, $U_i \cap A \neq \emptyset$. Let $s' \in U_i \cap A$. $a_n$ is constant on the $U_i$ for $n \leq N$ so
	\begin{align*}
		\norm{F(s) - F(s')} \leq \delta^N \to 0
	\end{align*}
	as $N \to \infty$ so $F(s)$ is indeed a limit point.
\end{proof}

While $\condZZTr$ is not the condensation of a compact space, it is built by compact sets in a similar way to the real numbers.

\begin{lemma}
	\label{lemma-ZZTrccompact}
	$\ZZTrc$ is compact.
\end{lemma}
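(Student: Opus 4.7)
The plan is to realise $\ZZTrc$ as a closed subspace of a compact product of finite discrete sets, and then invoke Tychonoff. The essential observation is that the constraint $\sum_n |a_n| r^n \leq c$ together with $a_n \in \ZZ$ imposes both a uniform lower bound on the support and a finite range for each coefficient: since $r \in (0,1)$, $r^n \to \infty$ as $n \to -\infty$, so for $n$ sufficiently negative the bound $|a_n| \leq c/r^n < 1$ forces $a_n = 0$. Hence there exists an integer $N_0 = N_0(c,r)$ such that every element of $\ZZTrc$ satisfies $a_n = 0$ for $n < N_0$, while for each $n \geq N_0$ the coefficient $a_n$ lies in the finite set $F_n := \{k \in \ZZ : |k| r^n \leq c\}$.

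Then consider the map
\begin{align*}
\Phi : \ZZTrc \to K := \prod_{n \geq N_0} F_n, \qquad \sum_n a_n T^n \mapsto (a_n)_{n \geq N_0},
\end{align*}
where each $F_n$ carries the discrete topology. By Tychonoff, $K$ is compact; being a countable product of finite discrete spaces it is moreover Hausdorff and metrisable. I would then check that $\Phi$ is a topological embedding: a basic open neighbourhood of $f \in \ZZTrc$ of $T$-adic radius $\delta^N$ consists of all series whose coefficients agree with those of $f$ for indices below $N$, and since every element vanishes below $N_0$ this restricts only the finitely many coordinates indexed by $N_0 \leq n < N$, matching exactly the basic product-topology neighbourhoods of $\Phi(f)$.

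Finally, one must show that $\Phi(\ZZTrc)$ is closed in $K$; the image is exactly
\begin{align*}
\Phi(\ZZTrc) = \Big\{ (a_n) \in K : \sum_{n \geq N_0} |a_n| r^n \leq c \Big\}.
\end{align*}
Given a convergent sequence $(a_n^{(k)})_k \to (a_n)$ in $K$, discreteness of each factor forces every individual coordinate to eventually stabilise, so for any fixed $M \geq N_0$ the truncated sum $\sum_{n=N_0}^{M} |a_n^{(k)}| r^n$ is eventually equal to $\sum_{n=N_0}^{M} |a_n| r^n$ and is therefore bounded by $c$; letting $M \to \infty$ preserves the total-mass bound. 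The image is thus closed in a compact space, hence compact, and by the identification of topologies so is $\ZZTrc$. The most delicate point is matching the $T$-adic and subspace topologies, but it reduces to a direct unpacking once the uniform support bound $N_0$ has been isolated; there is no serious obstacle.
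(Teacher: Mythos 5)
Your argument is correct and is essentially the paper's: both isolate a uniform lower bound on the support (the paper does this by multiplying by a power of $T$ so that $c<1$, you by directly bounding $|a_n|\le c/r^n<1$ for $n$ sufficiently negative), observe that each remaining coefficient ranges over a finite set, and then package this into a profinite structure — the paper as an inverse limit of the finite truncations $\ZZTrc^m$, you as a closed subspace of the Tychonoff product $\prod_{n\ge N_0} F_n$. These two descriptions are interchangeable, since an inverse limit of finite discrete sets is precisely a closed subspace of their product with the induced topology, so the proofs are the same modulo phrasing.
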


\begin{proof}
	We will prove the stronger result that it is a profinite, which is equivalent to being a limit of finite discrete spaces \cite[\href{https://stacks.math.columbia.edu/tag/08ZY}{Tag 08ZY}]{stacks-project}.

	Firstly, $\sum |a_i| r^{i+k} = r^k \sum |a_i| r^{i+k}$ so multiplying by $T^k$ is an isomorphism $\ZZTrc \to \ZZ((T))_{r, r^k c}$. Hence, multiplying by a high enough power of $T$, we may assume that $c < 1$ (remember that $r < 1$).
	
	Then,
	\begin{align*}
		\ZZTrc \subseteq \ZZ[[T]] = \lim_{\longleftarrow} \prod_{n=0}^m \ZZ \cdot T^n
	\end{align*}
	where the inverse limit is of discrete topological spaces, and the right-hand space has the $T$-adic topology.

	Define
	\begin{align*}
		\ZZTrc^m = \{ \sum_{n = 0}^m a_n T^n | a_n \in \ZZ, \sum_{n =0}^m |a_n| r^n \leq c\} \subseteq \prod_{n=0}^m \ZZ \cdot T^n
	\end{align*}
	Since $\sum |a_n| r^n \leq c$ we have finite choices of $a_n$ so the sets $\ZZTrc^m$ are finite.

	In an analogous way to $\ZZ[[T]]$ we can write $\ZZTrc$ as an inverse limit of the $\ZZTrc^m$ and it inherits the $T$-adic topology.
\end{proof}

To understand the condensed ring $\ZZTr$ we first study the following subring of power series.

\begin{defi}
	Let $0 < r < 1$. Define the ring
	\begin{align*}
		\ZZ((T))_{> r} = \{\sum_{n \gg -\infty} a_n T^n | a_n \in \ZZ, \; \exists r' > r : |a_n|(r')^n \to 0\}
	\end{align*}
\end{defi}

\begin{rem}
	The condition $|a_n|(r')^n \to 0$ tells us the following. $|a_n|(r')^n$ must be bounded as a sequence so for any $r < s < r'$,
	\begin{align*}
		\norm{\sum_{n \gg - \infty} a_n s^n} \leq \sum_{n \gg - \infty} (|a_n| (r')^n) \left( \frac{s}{r'} \right)^n < \infty
	\end{align*}
	by comparison with the geometric series, so the radius of convergence is $\geq r'$. Hence, $\ZZ((T))_{>r}$ are all holomorphic functions on some neighbourhood on $B(0,r) \setminus \{0\}$.

	Notice that naturally $\ZZ((T))_{>r} \subseteq \condZZTr(*)$.
\end{rem}

This ring is extensively studied in \cite{analytic} and \cite{arithmetic}. We quote the following result.

\begin{theorem}
	\label{thm-zztrpid}
	Let $0 < r < 1$. Then, $\ZZ((T))_{>r}$ is a principal ideal domain. The non-zero prime ideals are the following.
	\begin{enumerate}
		\item For any $x \in \CC^*$ with $|x| \leq r$, the kernel of the map
			\begin{align*}
				\ZZ((T))_{>r} \to \CC : \sum a_n T^n \mapsto \sum a_n x^n
			\end{align*}
			generated by some $f \in 1 + T \ZZ[[T]]$ whose only zeroes on $\overline{B}(0,r)$ are at $x$ and $\overline{x}$ with multiplicity $1$. This map is surjective onto $\RR$ when $x \in \RR$.
		\item For any prime number $p$, the ideal $(p)$.
		\item For any prime number $p$ and any topologically nilpotent unit $x \in \overline{\QQ_p}$, the kernel of the map
			\begin{align*}
				\ZZ((T)) \to \overline{\QQ_p}: \sum a_n T^n \mapsto a_n x^n
			\end{align*}
	\end{enumerate}
\end{theorem}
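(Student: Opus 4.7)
The statement is quoted from \cite{analytic} and \cite{arithmetic}, so a complete proof would be substantial; my plan is to indicate the main structural ingredients.

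First I would set up the analytic picture. An element $f = \sum a_n T^n \in \ZZ((T))_{>r}$ is, by definition, a Laurent series converging on some punctured disc $B(0, r') \setminus \{0\}$ with $r' > r$, so there is a tautological embedding of $\ZZ((T))_{>r}$ into the ring of holomorphic functions on a neighbourhood of $\overline{B}(0, r) \setminus \{0\}$ (depending on $f$). Together with the identity theorem, this shows $\ZZ((T))_{>r}$ is an integral domain. It also gives, for each $x \in \CC^*$ with $|x| \leq r$, a well-defined ring homomorphism $\sum a_n T^n \mapsto \sum a_n x^n$ into $\CC$, whose image lies in $\RR$ when $x \in \RR$; surjectivity in the real case follows from a base-$|x|$ expansion in the same spirit as \cref{thm-main}, using that bounded integer coefficients decay fast enough against any $r' > r$ to lie in $\ZZ((T))_{>r}$.

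Next I would prove the PID property in two stages: Bezout, then Noetherianity. The key tool is a Weierstrass preparation statement tailored to this ring: each non-zero $f \in \ZZ((T))_{>r}$ can be written as $T^k \cdot P \cdot u$ where $P$ is a polynomial whose roots are precisely the zeros of $f$ in $\overline{B}(0,r) \setminus \{0\}$ (with multiplicity), and $u$ is a unit of $\ZZ((T))_{>r}$ (i.e.\ nowhere vanishing on some neighbourhood of $\overline{B}(0,r) \setminus \{0\}$ and with invertible reduction mod every prime $p$). Granted this, any finitely generated ideal reduces to a polynomial gcd problem, yielding Bezout. To upgrade Bezout to PID I would attach to each $f$ a ``degree'' (number of complex zeros in $\overline{B}(0,r)$ counted with multiplicity, together with the $p$-adic valuation data), show this degree is additive under multiplication and strictly decreases along proper divisors, and conclude that ascending chains of principal ideals terminate.

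For the classification of primes, each ideal listed is manifestly prime: (1) is the kernel of evaluation into $\CC$ and the Weierstrass polynomial $f \in 1 + T\ZZ[[T]]$ with zeros only at $x, \overline{x}$ generates this kernel (and is unique up to units); (2) is prime because $\ZZ((T))_{>r}/(p) \cong \FF_p((T))$, still a domain; (3) is the kernel of evaluation into a characteristic-$p$ discrete valuation field, which is a domain. For exhaustion, I would take a non-zero prime $\mathfrak{p}$ and split into cases according to $\mathfrak{p} \cap \ZZ$. If $\mathfrak{p} \cap \ZZ = (p)$ and $\mathfrak{p} \neq (p)$, reduce mod $p$ and classify the primes of $\FF_p((T))$, matching them against the topologically nilpotent units of $\overline{\QQ_p}$. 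If $\mathfrak{p} \cap \ZZ = (0)$, pick a non-zero $f \in \mathfrak{p}$, apply Weierstrass to locate a complex zero $x \in \overline{B}(0,r)$, and use the fact that $\mathfrak{p}$ has height one (since the ring is $1$-dimensional, a consequence of the degree argument above) to conclude $\mathfrak{p}$ is the kernel of evaluation at $x$.

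The main obstacle is the Weierstrass preparation statement in this mixed setting: the coefficient ring $\ZZ$ is not a field, so one must control simultaneously the archimedean zeros (inside $\overline{B}(0,r) \subseteq \CC$), the mod-$p$ reductions, and the $p$-adic zeros of the reductions for every prime $p$. This uniform control is what forces the two-parameter convergence condition ($\exists r' > r$) in the definition of $\ZZ((T))_{>r}$ — working in a single Banach slice $\ZZTrc$ would not be enough, since invertibility is an open condition in $r'$. Once Weierstrass is in hand, everything else is either classical complex-analytic gcd manipulation or standard $p$-adic bookkeeping.
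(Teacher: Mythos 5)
The paper does not prove \cref{thm-zztrpid}: it is explicitly quoted from \cite{analytic} and \cite{arithmetic} (``This ring is extensively studied in \cite{analytic} and \cite{arithmetic}. We quote the following result.''), and you correctly flag this. So there is no in-paper proof to compare against; what can be checked is whether your sketch is internally sound.

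Your structural instincts (Weierstrass-type preparation, B\'ezout followed by a Noetherianity/degree argument, classification by studying $\mathfrak{p}\cap\ZZ$) are in the right spirit, but the final case analysis contains a genuine error. You observe, correctly, that $\ZZ((T))_{>r}/(p)\cong \FF_p((T))$; but $\FF_p((T))$ is a \emph{field}, so $(p)$ is already maximal and there are no primes $\mathfrak{p}$ with $(p)\subsetneq\mathfrak{p}$. Your proposal, however, tries to obtain the primes of case (3) by assuming $\mathfrak{p}\cap\ZZ=(p)$ with $\mathfrak{p}\neq(p)$ and ``classifying primes of $\FF_p((T))$'' --- this case is empty. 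In fact the case-(3) ideals satisfy $\mathfrak{p}\cap\ZZ=(0)$: the evaluation map $T\mapsto x$ into $\overline{\QQ_p}$ sends each integer $m$ to itself, which is nonzero in $\overline{\QQ_p}$, so no rational prime lies in the kernel. The correct split of the exhaustion step is therefore: $\mathfrak{p}\cap\ZZ=(p)$ forces $\mathfrak{p}=(p)$ (case (2)), while $\mathfrak{p}\cap\ZZ=(0)$ encompasses \emph{both} the archimedean evaluations (case (1)) and the $p$-adic evaluations (case (3)); distinguishing these last two requires a uniform Weierstrass/Newton-polygon argument that locates, for a generator of $\mathfrak{p}$, a zero either in $\overline{B}(0,r)\subseteq\CC$ or in the open unit disc of some $\overline{\QQ_p}$. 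That simultaneous control across all places is precisely the technical heart of the references and is not recoverable from a mod-$p$ reduction alone.
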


\subsection{Proof of the main result}

We will need a few lemmas in order to conclude the proof. The main proof strategy will be restricting ourselves to the compact sets $\ZZTrc$ and using the fact that extremally disconnected spaces are projective in the category of compact Hausdorff spaces.

\begin{lemma}
	\label{lemma-thetacontinuous}
	Let $0 < r < 1$ and let $r' < r$. Then, the map
	\begin{align*}
		F: \ZZTrc \to \RR, \; \sum_{n \gg - \infty} a_n T^n \mapsto \sum_{n \gg - \infty} a_n (r')^n
	\end{align*}
	is continuous.
\end{lemma}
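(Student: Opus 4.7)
The plan is to work directly with the $T$-adic topology on $\ZZTrc$ and exploit the compactness-type bound $\sum |a_n| r^n \leq c$ to dominate the tail of the series defining $F$.

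First, I would unpack what closeness in the $T$-adic topology means. If $\norm{f - g} \leq \delta^N$, then $v_T(f - g) \geq N$, which says precisely that the coefficients of $f$ and $g$ at $T^n$ agree for all $n < N$. So writing $f = \sum a_n T^n$ and $g = \sum b_n T^n$, we get
\begin{align*}
    F(f) - F(g) = \sum_{n \geq N} (a_n - b_n)(r')^n.
\end{align*}

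The key estimate comes from the defining constraint of $\ZZTrc$. For any $f = \sum a_n T^n \in \ZZTrc$ and any $N \in \ZZ$,
\begin{align*}
    \sum_{n \geq N} |a_n|(r')^n = \sum_{n \geq N} |a_n| r^n \left(\frac{r'}{r}\right)^n \leq \left(\frac{r'}{r}\right)^N \sum_{n \geq N} |a_n| r^n \leq c \left(\frac{r'}{r}\right)^N,
\end{align*}
using $(r'/r)^n \leq (r'/r)^N$ for $n \geq N$ (since $r' < r$, so $r'/r < 1$). Applying this to both $f$ and $g$ gives
\begin{align*}
    |F(f) - F(g)| \leq 2c\left(\frac{r'}{r}\right)^N,
\end{align*}
which tends to $0$ as $N \to \infty$.

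Putting it together: given $\epsilon > 0$, choose $N$ with $2c(r'/r)^N < \epsilon$, then set $\eta = \delta^N$. For any $f, g \in \ZZTrc$ with $\norm{f - g} \leq \eta$, we have $|F(f) - F(g)| < \epsilon$. Hence $F$ is actually uniformly continuous, not merely continuous at each point. There is no serious obstacle here; the only thing to watch is the typo-prone distinction between $v_T$ and $\norm{\cdot}$, and the fact that while $f - g$ itself need not lie in any fixed $\ZZTrc$, the individual bounds $\sum |a_n|(r')^n \leq c$ and $\sum |b_n|(r')^n \leq c$ on the tails are enough to control the difference.
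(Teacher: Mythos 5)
Your proof is correct and takes essentially the same approach as the paper: both cancel the first $N$ coefficients using the $T$-adic topology and then bound the tail via the constraint $\sum |a_n| r^n \leq c$. The only cosmetic difference is that the paper bounds each coefficient individually by $|a_n|\leq c r^{-n}$ and sums a geometric series, whereas you factor $(r'/r)^N$ out of the tail sum directly, yielding the marginally sharper constant $2c(r'/r)^N$ in place of $2c(r'/r)^N/(1-r'/r)$.
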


\begin{proof}
	For $f = \sum_{n \gg - \infty} a_n T^n \in \ZZTrc$ we must have $a_n \leq c \cdot r^{-n}$ for all $n$. If $\norm{f-g} < \delta^N$, the first $N$ coefficients match so
	\begin{align*}
		\norm{F(f) - F(g)} \leq 2 \sum_{n > N} c r^{-n} (r')^n  = 2 c \frac{\left(\frac{r'}{r}\right)^N}{1 - \frac{r'}{r}} \to 0
	\end{align*}
	as $N \to \infty$ so the function is continuous.
\end{proof}

\begin{lemma}
	\label{lemma-ZZTrccover}
	Let $K \subseteq \RR$ be compact. Then, there is some $c > 0$ such that
	\begin{align*}
		K \subseteq \theta_{r'} (\ZZTrc)
	\end{align*}
	where $\theta_{r'} : \ZZTrc \to \RR$ is the map in \cref{thm-main} evaluated at the point.
\end{lemma}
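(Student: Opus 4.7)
The plan is to use compactness to reduce to a symmetric interval $[-M, M]$, and then to construct, for each $x$ in this interval, an explicit preimage under $\theta_{r'}$ by a greedy base-$1/r'$ digit expansion whose weighted norm is bounded uniformly in $x$.

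First, since $K \subseteq \RR$ is compact it is bounded; pick $M \geq 1$ with $K \subseteq [-M, M]$. It then suffices to exhibit a single $c > 0$ such that every $x \in [-M, M]$ lifts to some element of $\ZZTrc$ under $\theta_{r'}$.

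Second, fix $x \in [-M, M]$. Set $N := \lceil 1/r' \rceil$ and let $n_0 \in \ZZ$ be the largest integer with $(r')^{n_0} \geq M$; since $r' < 1$ and $M \geq 1$ we have $n_0 \leq 0$, and $n_0$ depends only on $M$ and $r'$. Define coefficients greedily: set $x_{n_0} := x$ and, for $n \geq n_0$,
\[
a_n := \mathrm{sgn}(x_n)\left\lfloor \frac{|x_n|}{(r')^n} \right\rfloor \in \ZZ, \qquad x_{n+1} := x_n - a_n (r')^n.
\]
A short induction shows $|x_{n_0}| \leq (r')^{n_0}$ (immediate from the choice of $n_0$) and $|x_{n+1}| < (r')^n$ for every $n \geq n_0$, so that $|a_{n_0}| \leq 1$ and $|a_n| \leq \lfloor 1/r' \rfloor \leq N$ for $n > n_0$. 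Moreover $x_n \to 0$, whence $x = \sum_{n \geq n_0} a_n (r')^n = \theta_{r'}\!\left(\sum_{n \geq n_0} a_n T^n\right)$.

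Finally, I would estimate the weighted norm uniformly in $x$:
\[
\sum_{n \geq n_0} |a_n|\, r^n \leq N \sum_{n \geq n_0} r^n = \frac{N\, r^{n_0}}{1 - r}.
\]
Since $r' < r < 1$ and $n_0 \leq 0$, one has $r^{n_0} \leq (r')^{n_0}$, and by maximality of $n_0$ also $(r')^{n_0} < M/r'$. Hence $\sum_{n \geq n_0} |a_n|\, r^n \leq \tfrac{NM}{r'(1-r)}$, and $c := \tfrac{NM}{r'(1-r)}$ works uniformly for all $x \in [-M, M] \supseteq K$. The argument is essentially elementary; the only step requiring any care is the inductive bookkeeping in the greedy construction that keeps $|a_n|$ uniformly bounded by $N$, and there is no genuine obstacle.
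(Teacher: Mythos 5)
Your proof is correct and takes essentially the same approach as the paper: reduce to $K = [-M,M]$ and construct a greedy base-$1/r'$ digit expansion with coefficients bounded in absolute value by roughly $1/r'$, then bound $\sum |a_n| r^n$ by a geometric series whose constant depends only on $M$. Your version pins down a starting index $n_0$ uniform in $x$ and uses an explicit floor, whereas the paper chooses a minimal index per $x$ and picks each digit within distance $1$ of $x/(r')^n$, but the bookkeeping and the final estimate are the same.
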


\begin{proof}
	Without loss of generality, assume $K = [-M, M]$ since any compact subset of $\RR$ is contained in one such set.

	Fix $x \in K\setminus\{0\}$, and pick $n$ minimal so that $(r')^n \leq |x|$. This minimum is well-defined since $(r')^n \to 0$ as $n \to \infty$ and $(r')^n \to \infty$ as $n \to - \infty$. Also, there is a lower bound on $n$ that is uniform for all $x \in K$ (precisely, $n \geq \log M/\log r'$).

	Pick an integer $a_n \in \ZZ$ such that
	\begin{align*}
		\left| \frac{x}{(r')^n} - a_n \right| < 1
	\end{align*}
	and let $x' = x - a_n (r')^n$, so by our choice of $a_n$, $|x'| < (r')^n \leq |x|$ and $x' \in K$.

	Due to the choice of minimal $n$,
	\begin{align*}
		(r')^n \leq |x|  < (r')^{n-1}
	\end{align*}
	So we must have
	\begin{align*}
		|a_n| = \left| \frac{x}{(r')^n} - \frac{x'}{(r')^n}\right| \leq \frac{|x|}{(r')^n} + \frac{|x'|}{(r')^n} < \frac{1}{r'} + 1
	\end{align*}
	We now repeat the process with $x'$, choosing an $n'$ minimal so that $(r')^{n'} \leq |x'|$. By our choice of $x'$ we must have $n' > n$, and by the same reasoning $|a_{n'}| < 1 + \frac{1}{r'}$. Hence,
	\begin{align*}
		x = \sum_{n \gg - \infty} a_n (r')^n 
	\end{align*}
	where $|a_n| < 1 + 1/r'$ for all $n$. Then, $x = \theta_{r'} (\sum a_n T^n)$. Finally,
	\begin{align*}
		\sum |a_n| r^n \leq (1 + \frac{1}{r'}) \sum_{n \geq \log M /\log r'} r^n
	\end{align*}
	which is a bound only dependent on $M$, so $\theta_{r'}(\ZZTrc) \supseteq K$ for some $c > 0$.
\end{proof}

\begin{lemma}
	\label{lemma-kerneltheta}
	The kernel of $\theta_{r'}$ as in \cref{thm-main} is generated by some $f_{r'} \in \ZZ((T))_{> r}$ at every section.
\end{lemma}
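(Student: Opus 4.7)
The plan is to apply \cref{thm-zztrpid} to extract $f_{r'}$ as the generator, in $1 + T\ZZ[[T]] \subset \ZZ((T))_{>r}$, of the kernel of evaluation at $r'$ on $\ZZ((T))_{>r}$; by that theorem $f_{r'}$ has a simple zero at $r'$ and no other zero in $\overline{B}(0,r)$. The lemma then reduces to showing that at every extremally disconnected $S$, $\ker\theta_{r'}(S) = f_{r'} \cdot \condZZTr(S)$, and the inclusion $\supseteq$ is immediate from $f_{r'}(r') = 0$.

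For the reverse inclusion I take $g \in \ker\theta_{r'}(S) \cap \condZZTrc(S)$ and construct $h \in \condZZTr(S)$ with $f_{r'}\, h = g$. Writing $f_{r'} = 1 + c_1 T + c_2 T^2 + \cdots$, the equation $f_{r'} h = g$ unfolds into the recurrence $h_n = g_n - \sum_{k \geq 1} c_k h_{n-k}$. Since the definition of $\condZZTrc(S)$ provides a uniform lower bound $N$ on the degree of $g$, initialising $h_n = 0$ for $n < N$ solves the recurrence uniquely and expresses each $h_n$ as a $\ZZ$-linear combination of $g_N,\ldots,g_n$; hence each coefficient function $h_n$ lies in $C(S,\ZZ)$.

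The main analytic step is the uniform growth bound needed to conclude $h \in \condZZTr(S)$. I plan to work in the real Banach algebra $A_r = \{\phi = \sum_{n \gg -\infty} \phi_n T^n : \sum_n |\phi_n| r^n < \infty\}$ with norm $\|\phi\|_r = \sum_n |\phi_n| r^n$, which satisfies $\|fg\|_r \leq \|f\|_r \|g\|_r$. Factor $f_{r'}(T) = (T - r') v(T)$ in $A_r$; since $v$ is holomorphic on a neighbourhood of $\overline{B}(0,r)$ and nowhere zero there, $v^{-1}$ is holomorphic on a slightly larger disk and hence lies in $A_r$ with $\|v^{-1}\|_r =: K_1 < \infty$. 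Using $\phi(r') = 0$, one checks the closed formula $[\phi/(T - r')]_n = \sum_{k > n} \phi_k (r')^{k - n - 1}$, and then interchanging the order of summation yields $\|\phi/(T - r')\|_r \leq \|\phi\|_r / (r - r')$. Combining the two estimates gives $\|h(s)\|_r \leq K_1 (r - r')^{-1} \|g(s)\|_r \leq K c$ for every $s \in S$, so $h \in \condZZTr(S)$ as required. The main obstacle is exactly this bound: the existence of the generator is handed to us by the PID structure of \cref{thm-zztrpid} and continuity of the coefficients of $h$ in $s$ is automatic from the recurrence, but establishing that division by $f_{r'}$ is bounded in the $A_r$-norm requires the explicit estimate above.
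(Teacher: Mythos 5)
Your proof is correct and a bit more careful than the paper's at the analytic step. Both arguments share the same skeleton: extract the generator $f_{r'}$ from \cref{thm-zztrpid}, observe that the coefficient recurrence $h_n = g_n - \sum_{k\geq 1} c_k h_{n-k}$ forces $h = g/f_{r'}$ to have coefficients in $C(S,\ZZ)$, and then establish a uniform bound $\sum_n |h_n(s)| r^n \leq K$ so that $h \in \condZZTr(S)$. The difference is in how that bound is obtained. The paper writes $h = g \cdot f_{r'}^{-1}$ and invokes submultiplicativity $|g f_{r'}^{-1}|_{r,s} \leq |g|_{r,s}\,|f_{r'}^{-1}|_{r,s}$, but this finesses a subtlety: the element $f_{r'}^{-1} \in \ZZ[[T]]$ constructed by geometric series has radius of convergence exactly $r' < r$, so $\sum |b_n| r^n$ diverges for its integer Taylor coefficients; the Laurent expansion of $f_{r'}^{-1}$ on the annulus $r' < |T| < R$ that \emph{is} summable at $|T| = r$ has non-integer coefficients and infinitely many negative powers. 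The two expansions get reconciled only because $g(r') = 0$ removes the pole, so $g/f_{r'}$ is holomorphic on a full disk of radius $> r$, and then its (integer) Laurent coefficients admit the desired bound. Your factorisation $f_{r'} = (T - r')\,v(T)$ with $v, v^{-1}$ holomorphic and nonvanishing near $\overline{B}(0,r)$ makes this reconciliation explicit: the closed formula $[\phi/(T - r')]_n = \sum_{k > n} \phi_k (r')^{k-n-1}$ localises the pole removal into an elementary estimate $\|\phi/(T-r')\|_r \leq \|\phi\|_r/(r - r')$, and the remaining multiplication by $v^{-1} \in A_r$ is genuine Banach-algebra submultiplicativity. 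This buys a cleaner, fully explicit constant $K = \|v^{-1}\|_r/(r - r')$ and avoids the conflation present in the paper's write-up. One small normalisation to make explicit in a final version: the formula for division by $T - r'$ is stated for power series, so one should first multiply $g$ by $T^{-N}$ to clear the finitely many negative terms (harmless since $\theta_{r'}$ scales by $(r')^{-N}\neq 0$), apply the estimate, and shift back.
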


\begin{proof}
	Consider the map
	\begin{align*}
		\ZZ((T))_{> r} \to \RR: \sum a_n T^n \mapsto \sum a_n (r')^n
	\end{align*}
	By \cref{thm-zztrpid}, the kernel is generated by some $f_{r'} \in \ZZ((T))_{> r}$, where $f \in 1 + T\ZZ((T))$ and only vanishes at $r'$ within $\overline{B}(0,r)$.

	We claim that the kernel of $\theta_{r'}$ is generated by $f_{r'}$ at every section (since it has constant functions on the coefficients, $f_{r'} \in \ZZTr (S)$ for every $S$).

	Let $S$ be extremally disconnected, and take $g = \sum_{n \gg -\infty} a_n T^n \in \ZZ((T))_r (S)$ such that $\theta_{r'} (g) = 0$. Now write $f_{r'} = 1 + \sum_{n \geq 1} b_n T^n$ and notice that near $T = 0$ we can write
	\begin{align*}
		\frac{1}{f_{r'}} = \frac{1}{1 + \sum_{n \geq 1} b_n T^n} = 1 + \sum_{k=0}^\infty (-1)^k \left(\sum_{n \geq 1} b_n T^n \right)^k \in \ZZ[[T]]
	\end{align*}
	This is well-defined in a neighbourhood of $\overline{B}(0,r)$ except at $T = r'$, where we have a simple pole. Also, all the coefficients of the series are (constant) continuous functions $S \to \ZZ$.

	Then, $g \cdot f^{-1}_{r'}$ has integer coefficients and a removable singularity at $T = r'$. Write $|g|_{r,s}$ for $\sum |a_n (s)| r^n$ so that $|g|_{r,s} \leq c$ for some $c > 0$ and all $s \in S$. Note that $f \in \ZZ((T))_{>r}$ so $f^{-1}_{r'}$ is defined in an annulus centered at $0$ that contains $r$, and by absolute convergence $|f^{-1}_{r'}|_{r,s} \leq c'$ for some $c' > 0$ and all $s \in S$ ($f_{r'}$ is constant as a function on $S$). Hence, $|g \cdot f^{-1}_{r'}|_{r,s} \leq |g|_{r,s} \cdot |f^{-1}_{r'}|_{r,s} \leq  c \cdot c'$ for all $s \in S$. Therefore, $g \cdot f^{-1}_{r'} \in \ZZTr (S)$.

	It follows that $g \in (f_{r'})$.

\end{proof}

\begin{lemma}
	\label{lemma-thetaepic}
	The function $\theta_{r'}: \ZZTr \to \condRR$ is an epimorphism.
\end{lemma}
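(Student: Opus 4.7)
The plan is to reduce to showing that $\theta_{r'}$ is surjective at every section indexed by an extremally disconnected set, and then use projectivity of extremally disconnected sets in the category of compact Hausdorff spaces together with the compactness of the bounded pieces $\ZZTrc$.

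More concretely, by the remark following the Grothendieck axioms, a map of condensed abelian groups is an epimorphism if and only if it is surjective at every section. So I would fix an extremally disconnected set $S$ and a continuous map $h \in \condRR(S) = C(S,\RR)$, and produce a lift $\tilde h \in \condZZTr(S)$ with $\theta_{r'}(\tilde h) = h$.

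Since $S$ is compact and $h$ continuous, the image $h(S)$ is a compact subset of $\RR$. By \cref{lemma-ZZTrccover}, there exists $c > 0$ (depending on $h$) such that $h(S) \subseteq \theta_{r'}(\ZZTrc)$, where here $\ZZTrc$ denotes the compact topological space from \cref{lemma-ZZTrccompact} whose condensation is $\condZZTrc$ by \cref{lemma-condZZTrc}. By \cref{lemma-thetacontinuous}, the restriction $\theta_{r'}|_{\ZZTrc} \colon \ZZTrc \to \RR$ is continuous, so its image $\theta_{r'}(\ZZTrc)$ is a compact Hausdorff space and the co-restriction $\theta_{r'}|_{\ZZTrc} \colon \ZZTrc \to \theta_{r'}(\ZZTrc)$ is a continuous surjection between compact Hausdorff spaces.

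Now the key step: since $S$ is extremally disconnected, it is projective in the category of compact Hausdorff spaces. Viewing $h$ as a continuous map $S \to \theta_{r'}(\ZZTrc)$, projectivity yields a continuous lift $\tilde h \colon S \to \ZZTrc$ with $\theta_{r'} \circ \tilde h = h$. Via the inclusion $\ZZTrc \hookrightarrow \ZZTr$ and \cref{lemma-condZZTrc}, this $\tilde h$ is an element of $\condZZTrc(S) \subseteq \condZZTr(S)$ that maps to $h$, which gives the desired surjectivity. There is no real obstacle here beyond packaging the previous three lemmas; the one subtlety to double-check is that the image $\theta_{r'}(\ZZTrc) \subseteq \RR$ is Hausdorff (automatic as a subspace of $\RR$) so that projectivity of $S$ applies in the form stated.
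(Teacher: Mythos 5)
Your proof is correct and follows essentially the same strategy as the paper: reduce to sections, use \cref{lemma-ZZTrccover} to find $c$ so that the image of $h$ lies in $\theta_{r'}(\ZZTrc)$, and then invoke projectivity of $S$ against a continuous surjection of compact Hausdorff spaces built from \cref{lemma-ZZTrccompact} and \cref{lemma-thetacontinuous}. The only cosmetic difference is that you lift against $\ZZTrc \twoheadrightarrow \theta_{r'}(\ZZTrc)$, whereas the paper lifts against $\theta_{r'}^{-1}(h(S)) \cap \ZZTrc \twoheadrightarrow h(S)$; either works.
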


\begin{proof}
	Evaluating at every section, we must check that there is a surjection
	\begin{align*}
		\ZZTr (S) \to C(S, \RR)
	\end{align*}
	Let $f \in C(S, \RR)$. Then, $f(S)$ is compact so by \cref{lemma-ZZTrccover} there is some $c > 0$ such that
	\begin{align*}
		\theta_{r'} (\ZZTrc) \supseteq f(S)
	\end{align*}
	$f(S) \subseteq \RR$ is compact, so it is closed. By \cref{lemma-ZZTrccompact}, $\ZZTrc$ is compact and by \cref{lemma-thetacontinuous}, the map $\theta_{r'}$ is continuous when restricted to $\ZZTrc$. Hence, $\theta_{r'}^{-1}(f(S)) \cap \ZZTrc$ is a closed subset of a compact space, so it is compact. 
	\[
		\begin{tikzcd}
			& S \arrow{d}{f} \arrow[ld, dashed]	\\
			\theta_{r'}^{-1}(f(S))\cap \ZZTrc \arrow[r, twoheadrightarrow, "\theta_{r'}"] & f(S)
		\end{tikzcd}
	\]
	We have the following solid diagram of compact Hausdorff spaces. Since extremally disconnected sets are projective in the category of compact Hausdorff spaces, this factors through some continuous map $g: S \to \theta_{r'}^{-1}(f(S)) \cap \ZZTrc \subseteq \ZZTrc$.

	By \cref{lemma-condZZTrc}, $g \in \condZZTrc(S) \subseteq \ZZTr(S)$ so we have $\theta_{r'} (g) = f$. The map is surjective at every section, so it is an epimorphism.

\end{proof}

\begin{proof}
	(of \cref{thm-main}) We will show that
	\begin{align*}
		0 \to \ZZTr \xrightarrow{f_{r'}} \ZZTr \xrightarrow{\theta_{r'}} \condRR \to 0
	\end{align*}
	is a short exact sequence of condensed abelian groups. To prove the first injection, we must show that for profinite $S$, $f_{r'} \in \ZZTr(S)$ is not a zero divisor. Evaluating at $s \in S$, $f_{r'}$ does not change because it only has constant functions as its coefficients.

	Furthermore, $\ZZTr(S)(s)$ is an integral domain, so if $f(s) g(s) = 0$ then $g(s) = 0$. Then, if $fg = 0$, $g(s) = 0$ for all $S$ and $g = 0$.

	By \cref{lemma-kerneltheta}, $\ker (\theta_{r'}) = f_{r'} \cdot \ZZTr$. Finally, by \cref{lemma-thetaepic}, $\theta_{r'}$ is an epimorphism so this is a short exact sequence.
\end{proof}

\begin{example}
	We now return to the mentioned example, setting $r' = \frac{1}{10}$. Since $f_{r'} \in 1 + T\ZZ[[T]]$, we have $\deg f_{r'} \geq 1$. Evaluating at the point,
	\begin{align*}
		\ZZTr(*) = \{\sum_{n\gg - \infty} a_n T^n | \sum a_n r^n < \infty\}
	\end{align*}
	Within this ring, we must have $f_{r'} | 10T - 1$ since it is in the kernel. Write $f_{r'} g = 10T - 1$. Since $f_{r'}$ does not vanish at $0$ and $10T-1$ does not have a pole at $0$, $g \in \ZZ[[T]]$. But then for degree reasons it is clear $\deg g = 0$ so $f_{r'} = \pm (10T -1)$.
\end{example}

\newpage
\bibliographystyle{amsalpha}
\bibliography{references}

\providecommand{\bysame}{\leavevmode\hbox to3em{\hrulefill}\thinspace}
\providecommand{\MR}{\relax\ifhmode\unskip\space\fi MR }
% \MRhref is called by the amsart/book/proc definition of \MR.
\providecommand{\MRhref}[2]{%
  \href{http://www.ams.org/mathscinet-getitem?mr=#1}{#2}
}
\providecommand{\href}[2]{#2}
\begin{thebibliography}{AGPS02}

\bibitem[AGPS02]{compact-open}
Marcelo Aguilar, Samuel Gitler, Carlos Prieto, and S.B. Sontz, \emph{Algebraic
  topology from a homotopical viewpoint}, 1 ed., Universitext, Springer, 2002.

\bibitem[AGV72]{topos}
M.~Artin, A.~Grothendieck, and J.L. Verdier, \emph{Théorie des topos et
  cohomologie etale des schémas}, Springer, 1972.

\bibitem[BBK21]{bornology}
Oren Ben-Bassat and Kobi Kremnizer, \emph{Fr\'echet modules and descent}, 2021.

\bibitem[BBM82]{cristalline}
P.~Berthelot, L.~Breen, and W.~Messing, \emph{Théorie de dieudonné
  cristalline ii}, Springer, 1982.

\bibitem[BH19]{pyknotic}
Clark Barwick and Peter Haine, \emph{Pyknotic objects, i. basic notions}, 2019.

\bibitem[BS19]{prismatic}
Bhargav Bhatt and Peter Scholze, \emph{Prisms and prismatic cohomology}, 2019.

\bibitem[CM12]{calcut2012topological}
Jack~S. Calcut and John~D. McCarthy, \emph{Topological pullback, covering
  spaces, and a triad of quillen}, 2012.

\bibitem[CS20]{masterclass}
Dustin Clausen and Peter Scholze, \emph{Masterclass in condensed mathematics},
  \url{https://www.math.ku.dk/english/calendar/events/condensed-mathematics/},
  November 2020.

\bibitem[Del77]{etale}
Pierre Deligne, \emph{Cohomologie etale}, Springer, 1977.

\bibitem[Gle58]{Gleason}
Andrew~Mattei Gleason, \emph{Projective topological spaces}, Illinois J. Math.
  \textbf{2} (1958), 482--489.

\bibitem[God]{faisceaux}
Roger Godement, \emph{Théorie algébrique et théorie des faisceaux},
  Actualités scientifiques et industrielles.

\bibitem[Har84]{arithmetic}
David Harbater, \emph{Convergent arithmetic power series}, American Journal of
  Mathematics \textbf{106} (1984), no.~4, 801--846.

\bibitem[HS07]{lcagroups}
Norbert Hoffmann and Markus Spitzweck, \emph{Homological algebra with locally
  compact abelian groups}, Advances in Mathematics \textbf{212} (2007), no.~2,
  504 -- 524.

\bibitem[Lur18]{saglurie}
Jacob Lurie, \emph{Spectral algebraic geometry}, Available on the author's
  website, 2018.

\bibitem[MK08]{Cauchy-eq}
Attila~Gilányi Marek~Kuczma, \emph{An introduction to the theory of functional
  equations and inequalities: Cauchy's equation and jensen's inequality}, 2nd
  ed., Birkhäuser Basel, 2008.

\bibitem[na21]{nlab-internalhom}
nLab authors, \emph{closed monoidal structure on presheaves},
  \url{http://ncatlab.org/nlab/show/closed%20monoidal%20structure%20on%20presheaves},
  March 2021,
  \href{http://ncatlab.org/nlab/revision/closed%20monoidal%20structure%20on%20presheaves/21}{Revision
  21}.

\bibitem[Neu28]{Neumann}
J.v. Neumann, \emph{Ein system algebraisch unabhänger zahlen}, Mathematische
  Annalen \textbf{212} (1928), 134--141.

\bibitem[Sch]{analytic}
Peter Scholze, \emph{Lectures on analytic geometry},
  \url{https://www.math.uni-bonn.de/people/scholze/Condensed.pdf}.

\bibitem[Sch19]{condensed}
\bysame, \emph{Lectures on condensed mathematics},
  \url{https://www.math.uni-bonn.de/people/scholze/Condensed.pdf}, May 2019.

\bibitem[She74]{whitehead}
Saharon Shelah, \emph{Infinite abelian groups, whitehead problem and some
  constructions}, Israel Journal of Mathematics \textbf{18} (1974), 243--256.

\bibitem[{Sta}21]{stacks-project}
The {Stacks project authors}, \emph{The stacks project},
  \url{https://stacks.math.columbia.edu}, 2021.

\bibitem[Wei97]{weibel}
Charles~A. Weibel, \emph{An introduction to homological algebra}, Cambridge
  University Press, 1997.

\end{thebibliography}

\end{document}